\newcommand{\diff}{\,\mathrm{d}}
\numberwithin{equation}{section}%
\newtheorem{lem}{Lemma}[section]
\newtheorem{thm}{Theorem}[section]
\newtheorem{pro}{Proposition}[section]
\newtheorem{Def}{Definition}[section]
\newtheorem{Assump}{Assumption}[section]
\newtheorem{rmk}{Remark}[section]
\begin{document}
\begin{frontmatter}
\title{Convergence to equilibrium for fully discretizations of nonlocal Cahn-Hilliard equation}

\tnotetext[label1]{The research of Dongling Wang is supported in part by the National Natural Science Foundation of China under grants 12271463. The work of Danni Zhang is supported by Hunan Provincial Innovation Foundation For Postgraduate (No: CX20230625). \\ Declarations of interest: none.}

\author[XTU]{Danni Zhang} 
\ead{zhdanni0807@smail.xtu.edu.cn;}
\author[XTU]{Dongling Wang\corref{mycorrespondingauthor}}
\ead{wdymath@xtu.edu.cn; ORCID 0000-0001-8509-2837}
\cortext[mycorrespondingauthor]{Corresponding author. }

\address[XTU]{Hunan Key Laboratory for Computation and Simulation in Science and Engineering, School of Mathematics and Computational Science, Xiangtan University, Xiangtan, Hunan 411105, China}

\begin{abstract}
The study of long-term dynamics for numerical solutions of nonlinear evolution equations, particularly phase field models, has consistently garnered considerable attention. The Cahn-Hilliard (CH) equation is one of the most important phase field models and is widely applied in materials science. In order to more accurately describe the practical phenomena in material microstructural phase transitions, the Nonlocal Cahn-Hilliard (N-CH) equation incorporates a finite range of spatial nonlocal interactions is introduced, which is a generalization of the classic CH equation.

However, compared to its classic counterpart, it is very challenging to investigate the long-term asymptotic behavior of solution to the N-CH equation due to the complexity of the nonlocal integral term and the lack of high-order diffusion term.
In this paper, we consider first-order and second-order temporal discretization methods for the N-CH equation, respectively, while utilizing a second-order finite difference method for spatial approximation to construct the energy stable fully discrete numerical schemes. Based on energy stability and the {\L}ojasiewicz inequality, we rigorously prove that the numerical solutions of these fully discrete numerical schemes converge to equilibrium as time goes to infinity.
\end{abstract}

\begin{keyword}
Nonlocal Cahn-Hilliard equation, convergence to equilibrium, energy stability, {\L}ojasiewicz inequality.
\end{keyword}
\end{frontmatter}

\section{Introduction}
In this work, we consider the following Nonlocal Cahn-Hilliard (N-CH) equation with the periodic boundary condition~\cite{Bates1,Du1}:
\begin{equation}\label{equ-1}
\begin{cases}
\partial_t u= \Delta \omega,\\
\omega=F'(u)+\varepsilon^2 (J\ast1)u-\varepsilon^2 (J\ast u),
\end{cases}\quad \mathbf{x}\in\Omega,\:t>0,
\end{equation}
where $\Omega=\prod_{i=1}^d(0,L_i)$ is a rectangular domain in $\mathbb{R}^{d}(d=1,2,3)$, $\varepsilon>0$ is the interfacial parameter, and the unknown functions are the order parameter $u(\mathbf{x},t)$ is subject to the initial condition $u(\mathbf{x},0)=u_0(\mathbf{x})$ and the chemical potential $\omega(\mathbf{x},t)$.
The nonlinearity $F$ is the free energy potential. A thermodynamically relevant potential $F$ is given by the  logarithmic function
 $F(r)=(1+r)\ln(1+r)+(1-r)\ln(1-r)-\frac{\theta}{2} r^2$ with $\theta\geq 2$ and $r\in(-1,1)$,
 which is often approximated by a regular double-well potential function,
 \begin{equation}\label{pot1}
 F(r)=\frac{1}{4}(r^2-1)^2,\quad r\in \mathbb{R}.
 \end{equation}
 The interaction kernel $J$ is integrable and even, i.e., $J(\mathbf{x})=J(-\mathbf{x})$ for any $\mathbf{x}\in \Omega$, and
 \[
 \textrm{J}\ast 1:=\int_{\Omega} \textrm{J}(\mathbf{\mathbf{x}}-\mathbf{y})\diff \mathbf{y},\quad
 (\textrm{J}\ast u)(\mathbf{x}):=\int_{\Omega} \textrm{J}(\mathbf{\mathbf{x}}-\mathbf{y})u(\mathbf{y})\diff \mathbf{y}.
 \]
 For instance, we can take a Gaussian kernel $\textrm{J}(\mathbf{x})=c_Je^{-\xi|\mathbf{x}|^2},\xi>0$, or a Newtonian kernel $\textrm{J}(\mathbf{x})=c_J|\mathbf{x}|^{-1}$, if $d=3$, $\textrm{J}(\mathbf{x})=-c_J \log(|\mathbf{x}|)$, if $d=2$.

In this paper, we further assume the kernel $\textrm{J}$ in \eqref{equ-1} is sufficiently smooth and $\Omega$-periodic and that the convolution $\textrm{J}\ast u$ is periodic \cite{Guan1}, i.e.,
\[
(\textrm{J}\ast u)(\mathbf{x}):=\int_{\Omega} \textrm{J}(\mathbf{y})u(\mathbf{x}-\mathbf{y})\diff \mathbf{y}=\int_{\Omega} \textrm{J}(\mathbf{\mathbf{x}}-\mathbf{y})u(\mathbf{y})\diff \mathbf{y},\quad u\in L_{per}^2(\Omega),
\]
where $L_{per}^2(\Omega)$ denote the space of all periodic functions in $L^2(\Omega)$.
Clearly $\textrm{J}\ast 1=\int_{\Omega} \textrm{J}(\mathbf{x})\diff \mathbf{x}>0$ is a positive constant.
The N-CH equation~\eqref{equ-1}~can be viewed as the $H^{-1}$ gradient flow with respect to the following free energy functional
\begin{equation}\label{eq1}
    E(u)=\int_{\Omega} F(u) \diff \mathbf{x}+\frac{\varepsilon^2 (\textrm{J}\ast 1)}{2}\|u\|_{L^2}^2-\frac{\varepsilon^2 }{2}(u,\textrm{J}\ast u)_{L^2},
\end{equation}
where $\|\cdot\|_{L^2}$ and $(\cdot,\cdot)_{L^2}$ stand for the spatial $L^2(\Omega)$ norm and $L^2(\Omega)$ inner product, respectively.
A solution to N-CH equation \eqref{equ-1} satisfies the mass conservation, 
\begin{equation}\label{eq33}
   \int_{\Omega}u(\mathbf{x},t)\diff \mathbf{x}=\int_{\Omega}u(\mathbf{x},0)\diff \mathbf{x},\quad t\geq 0,
\end{equation}
and energy stability
\begin{equation}\label{eq333}
\frac{\diff E}{\diff t}=\left (\frac{\delta E}{\delta u}, \partial_t u\right)=- \|\nabla \omega\|^2_{L^2}\leq 0,\quad t>0.
\end{equation}

The N-CH equation \eqref{equ-1} can be written as
\begin{equation}
    \partial_tu=\nabla \cdot (a(u)\nabla u)-(\Delta \textrm{J})\ast u,
\end{equation}
where $a(u)=3u^2-1+\varepsilon^2 (\textrm{J}\ast 1)$ is referred as the diffusive mobility. In order to ensure \eqref{equ-1} positive diffusive and the solution becomes regular in time, we make the following assumption \cite{Bates2,Bates3}
\begin{equation}\label{eq3}
    \gamma:=\varepsilon^2(\textrm{J}\ast 1)-1>0.
\end{equation}
Otherwise, the solution may exhibit some regular behavior.
In this paper, we always assume that the kernel $\textrm{J}$ satifies the condition \eqref{eq3} and we choose \eqref{pot1} as the potential function in N-CH equation \eqref{equ-1}.
Note that the kernel $\textrm{J}$ is even, we can rewrite the energy~\eqref{eq1}~as
\begin{equation}\label{eq331}
    E(u)=\int_\Omega \left(F(u)+\frac{\varepsilon^2}{4}\int_\Omega \textrm{J}(\mathbf{x}-\mathbf{y})(u(\mathbf{x})-u(\mathbf{y}))^2\diff y\right)\diff \mathbf{x}.
\end{equation}
The second term in \eqref{eq331} typically represents the interaction energy density, which describes the long-range interactions between atoms at different positions. The kernel $\textrm{J}$ is used to measure the strength of the interaction.

If we further assume that $\textrm{J}$ has a finite second moment in $\Omega$, i.e., $\frac{1}{2}\int_{\Omega}J(\mathbf{x})|\mathbf{x}|^2\diff \mathbf{x}=1$.
Then, the classic Cahn-Hilliard (CH) equation \cite{CH}
\begin{equation}\label{eq-199}
\begin{cases}
\partial_t u= \Delta \omega,\\
\omega=F'(u)-\varepsilon^2 \Delta u,
\end{cases}\quad \mathbf{x}\in\Omega,\:t>0,
\end{equation}
is recovered from \eqref{equ-1} in the local limit \cite{Du1,Li1}.
The corresponding local energy functional is
\begin{equation}
    E_{local}(u)=\int_\Omega \left(F(u)+\frac{\varepsilon^2}{2}|\nabla u(x)|^2\right)\diff \mathbf{x}.
\end{equation}
The classic CH equation is an approximate form of the NCH equation under the assumption of short-range interactions.
The solution of CH equation \eqref{eq-199} is also mass conservative and energy stable.

It is well-known that the CH equation is one of the typical phase field models describing the kinetics of the two-phase separation.
The gradient flow structure of the CH equation ensures that its $\omega$-limit set only contains equilibrium points which are critical points of energy functional $E_{local}$ with prescribed mass. Rybka and Hoffmann \cite{RH} provided a rigorous proof that the solution to the CH equation with an analytic nonlinear term converges to an equilibrium as time goes to infinity, i.e. the $\omega$-limit set
\[
\omega(u)=\left\{ u_\ast:\exists t_n\to \infty \text{ such that } u(t_n)\to u_\ast\; \text{ in }\; C^{4,\, \mu}(\Omega) \right\}
\]
is a singleton, where $\mu\in (0,1)$ and $u_\ast$ is a solution of the stationary problem:
\begin{equation}\label{sta-1}
\Delta F'(u_\ast)-\varepsilon^2 \Delta^2 u_\ast=0 \text{ in } \;C^{0,\, \mu}(\Omega).
\end{equation}
Their proof is mainly based on energy stability and a {\L}ojasiewicz-Simon inequality \cite{Simon}, which is a generalization of the celebrated {\L}ojasiewicz inequality \cite{L1, Haraux} in infinite-dimensional space. The {\L}ojasiewicz-Simon inequality allow us to prove convergence to equilibrium without any knowledge on the set of equilibrium state sets.

There has been a significant amount of work on the convergence to equilibrium of numerical solutions for the classic CH equation. Merlet and Pierre \cite{Pierre2} first proved the sequence generated by the backward Euler scheme to CH equation with analytical nonlinearity converges to an equilibrium by means of the {\L}ojasiewicz inequality, with spatial discretization using the finite element method. Thanks to the spatial discretization, the {\L}ojasiewicz-Simon inequality reduces to the standard {\L}ojasiewicz inequality. For schemes different from the backward Euler method, the study on the convergence to equilibrium can be referred to \cite{Pierre3,Pierre1}. The proof also main relies on energy stability and the {\L}ojasiewicz type inequality.

In recent years, as an extension of the classic CH equation, the N-CH equation has rapidly developed in various research fields, such as materials science \cite{Bates1}, image segmentation \cite{Gajewski}, electrode materials \cite{Kamlah}, and tumor growth \cite{Fritz}. There have been many works on both theoretical analysis and numerical aspects for the N-CH equation. In the theoretical level, Bates and Han \cite{Bates2,Bates3} studied the well-posedness of the N-CH equation equipped with Neumann or Dirichlet boundary condition by assuming the kernel is integrable. They also showed the existence of absorbing sets in the $H^1$ norm. Guan et al. \cite{Guan1} claimed that the existence and uniqueness of the periodic solution to the N-CH equation may be established by using a similar technique. Agosti et al. \cite{Agosti} investigate a N-CH equation with a singular single-well potential and degenerate mobility. They prove the existence and uniqueness of weak solutions for spatial dimensions $d\leq3$.
Regarding the long-time behavior of the weak solution to the N-CH equation, Gal and Grasselli \cite{Gal} established the convergence to equilibrium as time tends to infinity, i.e. the $\omega$-limit set of the weak solution $u$ of N-CH equation
\[
\omega(u)=\left\{ u_\ast:\exists t_n\to \infty\; \text{such that} \;u(t_n)\to u_\ast\; \text{in}\; L^2(\Omega) \right\}
\]
is a singleton, where $u_\ast$ is a solution of the stationary problem:
\begin{equation}\label{sta-2}
\begin{cases}
F'(u_\ast)+\varepsilon^2 (\textrm{J}\ast1)u_\ast-\varepsilon^2 (\textrm{J}\ast u_\ast)=\omega_\ast \;\text{ a.e. }\;\text{in} \;\Omega,\\
\omega_\ast=\text{constant},\; \int_\Omega u_\ast \diff \mathbf{x}=\int_\Omega u_0 \diff \mathbf{x}.
\end{cases}
\end{equation}
The proof based on energy stability and a generalized {\L}ojasiewicz-Simon inequality \cite{Gajewski1,Londen}.

On the numerical analysis, numerous numerical discretization methods been developed that can inherit the energy stability of the N-CH equation. Such as convex splitting schemes \cite{Guan1,Guan2,Guan3,Sheng}, stabilized linear schemes \cite{Li1,Du1,li2023,li2024}, exponential time differencing schemes \cite{Zhang1, Zhang2}, the BDF methods \cite{BG, Zhao}, the operator splitting spectral schemes \cite{Zhai1, Zhai2} and so on.

It is natural to ask if the sequences generated by the fully discrete numerical schemes for the N-CH equation converge to equilibrium as time approaches infinity. However, the analysis of long-time behavior of numerical solutions to the N-CH equation faces significant challenges, owing to the complexity of the nonlocal integral term and the lack of high-order diffusion term. To date, there has no rigorous numerical analysis results.
Note that when the kernel $\textrm{J}$ satisfies the aforementioned conditions, the CH equation can be viewed as a local approximation of the N-CH equation. This observation naturally motivates extending the convergence to equilibrium analysis techniques developed for the discretized classic CH equation to the N-CH equation.
The main contributions of this paper can be summarized as follows:
\begin{itemize}
\item\textbf{First-order schemes:} We present a comprehensive analysis of convergence to equilibrium for three energy-stable first-order temporal discretization schemes, including the backward Euler scheme, the convex splitting scheme and stabilized linear semi-implicit (SSI1) scheme, applied to the N-CH equation with an analytic nonlinearity. A second order finite difference method is employed for spatial discretization. For the backward Euler scheme, we first establish the existence and uniqueness of the solution sequence. For all three schemes, we prove discrete mass conservation and energy stability. By combining the energy stability and the {\L}ojasiewicz inequality, we rigorously prove that the sequences generated by these fully discrete schemes converge to equilibrium as time tends to infinity.
\item\textbf{Second-order schemes:} We develop and analyze two second-order temporal discretization schemes, including the BDF2 scheme and the linearly implicit (2LI) scheme, both of which are proved to be mass conservative and modified energy stable. Furthermore, we establish the unique solvability of the BDF2 scheme. Again leveraging energy stability and the {\L}ojasiewicz inequality, we prove the convergence of the numerical solution sequences to a steady state as $n \to \infty$.
\end{itemize}

The paper is organized as follows.
In Section \ref{sec2}, we introduce the discretizations of the spatial differential operators and the periodic convolution for the N-CH equation.
In Section \ref{sec3}, we give three first-order fully discrete numerical schemes for the N-CH equation and establish the unique solvability, mass conservation and energy stability. We rigorously prove the sequences generated by the proposed three numerical schemes converge to the equilibrium as time goes to infinity.  Two second-order schemes is presented in Section \ref{sec4}.
Finally, some concluding remarks are given in Section \ref{sec5}.

\section{Spatial discretization of the finite difference method}\label{sec2}
In this section, we employ the finite difference approximation on a staggered grid
developed in \cite{Wang1,Wang2,Guan1}
 for the spatial differential operators and the periodic convolution of the N-CH equation. Without loss of generality, we only discuss the two-dimensional scenario, the one-dimensional and three-dimensional cases are similar.

\subsection{Finite difference approximations of the spatial differential operators}
Our primary goal in this subsection is to define the grid functions, the difference discretization of the spatial differential operators as well as summation-by-parts formulae in 2D space. Suppose $\Omega=(0,L)\times (0,L)$. Let $h=\frac{L}{N}$, where $N\in\mathbb{Z}^+$ and $h>0$ is the spatial step size. We define the following three sets:
\[
V_N=\{i\cdot h|i=0,1,\cdots,N\},\;
C_N=\left\{\left(i-\frac{1}{2}\right)\cdot h|i=1,2,\cdots,N\right\},\;
C_{\bar{N}}=\left\{\left(i-\frac{1}{2}\right)\cdot h|i=0,1,\cdots,N+1\right\}.
\]
The elements in $V_N$ are edge-centered points, while the elements in $C_N$ and $C_{\bar{N}}$ are cell-centered points. Define the 2D grid function spaces:
\begin{align}
    & \mathcal{C}_{N\times N}=\{\phi: C_N \times  C_N\to \mathbb{R}\},\;\mathcal{C}_{\bar{N}\times \bar{N}}=\{ \phi: C_{\bar{N}} \times  C_{\bar{N}}\to \mathbb{R}\},\; V_{N\times N}=\{f:V_N\times V_N\to \mathbb{R}\},\\
    &\mathcal{E}^{x}_{N\times N}=\{f:V_N\times  C_N\to \mathbb{R}\},\; \mathcal{E}^{y}_{N\times N}=\{f:C_N\times  V_N\to \mathbb{R}\},\; \vec{\mathcal{E}}_{N\times N}=\mathcal{E}^{x}_{N\times N}\times \mathcal{E}^{y}_{N\times N}.
\end{align}
The functions of $\mathcal{C}_{N\times N}$ and $\mathcal{C}_{\bar{N}\times \bar{N}}$ are called cell-centered functions. In component form these functions are defined via $\phi_{i,j}:=\phi(x_i,y_j)$, where $x_i=(i-1/2)\cdot h, y_j=(j-1/2)\cdot h$.
The functions of $\mathcal{E}^{x}_{N\times N}$ and
$\mathcal{E}^{y}_{N\times N}$ are called edge-centered functions. In component form these functions are identified via $f_{i+1/2,j}:=f(x_{i+1/2},y_j),f_{i,j+1/2}:=f(x_i,y_{j+1/2})$, respectively. The functions of $V_{N\times N}$ are called vertex-centered functions are defined componentwise by $f_{i+1/2,j+1/2}:=f(x_{i+1/2},y_{j+1/2})$.

A cell-centered function $\phi\in \mathcal{C}_{\bar{N}\times \bar{N}}$ is said to satisfy periodic boundary conditions if and only if,
\begin{align}
    \phi_{N,j}=\phi_{0,j},\quad \phi_{N+1,j}=\phi_{1,j},\quad
    \phi_{i,N}=\phi_{i,0}, \quad \phi_{i,N+1}=\phi_{i,1}.
\end{align}
And we denote the set of such functions as $\mathcal{C}_{\bar{N}\times \bar{N}}^{per}$. The edge-centered functions and vertex-centered functions that satisfy periodic boundary conditions are similarly defined. The domain of the cell-centered functions, edge-centered functions and vertex-centered functions subject to the periodic boundary conditions are shown in Figure \ref{figure1}.
\begin{figure}[!htb]
	\centering
    \vspace{-0.35cm}
    \setlength{\abovecaptionskip}{-1.7cm}
     {\includegraphics[width=0.6\textwidth]{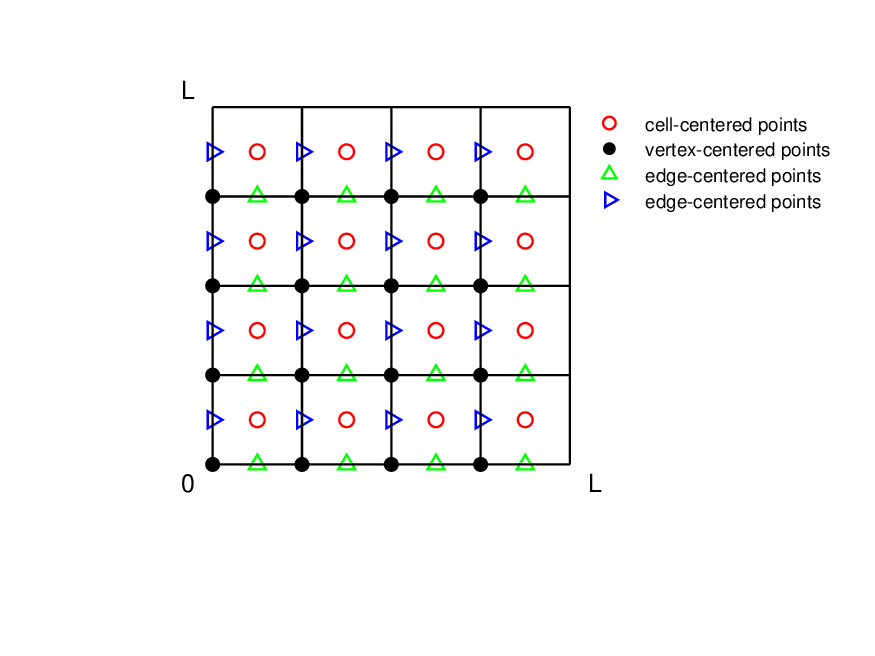}}
     \caption{The domain of various grid functions when $N=4$.}
     \label{figure1}
\end{figure}

The edge-to-center difference operators $\diff_x:\mathcal{E}^{x}_{N\times N}\to \mathcal{C}_{N\times N}$ and $\diff_y:\mathcal{E}^{y}_{N\times N}\to \mathcal{C}_{N\times N}$; and the center-to-edge difference operators $D_x:\mathcal{C}_{\bar{N}\times \bar{N}}\to \mathcal{E}^{x}_{N\times N}$ and $D_y:\mathcal{C}_{\bar{N}\times \bar{N}}\to \mathcal{E}^{y}_{N\times N}$, are defined componentwise via
\begin{align*}
    &\diff_xf_{i,j}=\frac{1}{h}(f_{i+1/2,j}-f_{i-1/2,j}),\quad \diff_yf_{i,j}=\frac{1}{h}(f_{i,j+1/2}-f_{i,j-1/2}),\\
    &D_x \phi_{i+1/2,j}=\frac{1}{h}(\phi_{i+1,j}-\phi_{i,j}),\quad D_y \phi_{i,j+1/2}=\frac{1}{h}(\phi_{i,j+1}-\phi_{i,j}).
\end{align*}
For any $\boldsymbol{f}=(f^x,f^y)^T$ and $\phi \in \mathcal{C}_{\bar{N}\times \bar{N}}$, the discrete divergence operator $\nabla_h\cdot:\vec{\mathcal{E}}_{N\times N}\to \mathcal{C}_{N\times N}$, gradient operator $\nabla_h:\mathcal{C}_{\bar{N}\times \bar{N}}\to \vec{\mathcal{E}}_{N\times N}$ and Laplace operator $\Delta_h:\mathcal{C}_{\bar{N}\times \bar{N}} \to \mathcal{C}_{N\times N}$ are given respectively by
\[
\nabla_h\cdot \boldsymbol{f}:=\diff_xf^x+\diff_yf^y,\quad \nabla_h\phi:=(D_x\phi,D_y\phi)^T,\quad \Delta_h\phi=\nabla_h\cdot(\nabla_h\phi).
\]
We define the following inner-products:
\begin{align}
    &(\phi\|\psi)= \sum_{i=1}^N\sum_{j=1}^N\phi_{i,j}\psi_{i,j}   ,\quad \phi,\psi \in \mathcal{C}_{\bar{N}\times \bar{N}},\\
    &[f\|g]_{x}=\frac{1}{2}\sum_{i=1}^N\sum_{j=1}^N(f_{i+1/2,j}g_{i+1/2,j}+f_{i-1/2,j}g_{i-1/2,j}),\quad f,g\in \mathcal{E}^{x}_{N\times N},\\
    &[f\|g]_{y}=\frac{1}{2}\sum_{i=1}^N\sum_{j=1}^N(f_{i,j+1/2}g_{i,j+1/2}+f_{i,j-1/2}g_{i,j-1/2}),\quad f,g\in \mathcal{E}^{y}_{N\times N},\\
   &(\nabla_h \phi\|\nabla_h\psi)=[D_x \phi\|D_x\psi]_{x}+[D_y \phi\|D_y\psi]_{y},\quad \phi,\psi \in \mathcal{C}_{\bar{N}\times \bar{N}}.
\end{align}
Using the above definitions, we can obtain the following summation-by-parts formulae \cite{Wang1,Wang2}.
\begin{pro}
If $\phi,\psi\in \mathcal{C}_{\bar{N}\times \bar{N}}^{per}$, then
$
    h^2(\nabla_h \phi\|\nabla_h\psi)=-h^2(\phi\|\Delta_h\psi),\quad
    h^2(\phi\|\Delta_h\psi)=h^2(\Delta_h\phi\|\psi).
    $
\end{pro}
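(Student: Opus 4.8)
The plan is to reduce the two-dimensional identities to a one-dimensional discrete summation-by-parts formula in each coordinate direction, using the fact that both the right-hand bilinear form $(\nabla_h\cdot\|\nabla_h\cdot)$ and the operator $\Delta_h$ split additively into an $x$-part and a $y$-part. Concretely, it suffices to show that for periodic grid functions and each fixed index $j$,
\[
\sum_{i=1}^N (D_x\phi)_{i-1/2,j}\,(D_x\psi)_{i-1/2,j} = -\sum_{i=1}^N \phi_{i,j}\,(\diff_x D_x\psi)_{i,j},
\]
together with the analogous statement in the $y$-direction; summing over the remaining index and adding the two directions then yields $h^2(\nabla_h\phi\|\nabla_h\psi) = -h^2(\phi\|\Delta_h\psi)$, since $\Delta_h\psi = \diff_x D_x\psi + \diff_y D_y\psi$.

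First I would record that for a periodic edge-centered function $f$ one has $f_{N+1/2,j} = f_{1/2,j}$, so that the two sums appearing in the definition of $[f\|g]_x$ run over exactly the same $N$ distinct $x$-edges. Consequently the $\tfrac12$-weighted average in $[\cdot\|\cdot]_x$ collapses to the plain lattice sum $\sum_{i,j} f_{i-1/2,j}g_{i-1/2,j}$, and similarly for $[\cdot\|\cdot]_y$; this turns the right-hand inner products into ordinary sums over the edge set, which is the form convenient for Abel summation.

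Next I would carry out Abel summation in the $x$-variable with $j$ fixed: writing $v_{i+1/2} := (D_x\psi)_{i+1/2,j}$, one expands
\[
-\sum_{i=1}^N \phi_{i,j}\,(\diff_x D_x\psi)_{i,j} = -\frac{1}{h}\sum_{i=1}^N \phi_{i,j}\,(v_{i+1/2}-v_{i-1/2}),
\]
shifts the summation index in one of the two resulting sums, and collects the telescoped boundary contribution $\phi_{N,j}v_{N+1/2}-\phi_{1,j}v_{1/2}$. The crucial point is that this boundary term is \emph{not} discarded: using periodicity ($\phi_{0,j}=\phi_{N,j}$ and $v_{N+1/2}=v_{1/2}$, the latter because $D_x\psi$ is periodic) it equals $-h\,(D_x\phi)_{1/2,j}\,v_{1/2}$, i.e.\ precisely the one edge term missing from the interior sum $-h\sum_{i=1}^{N-1}(D_x\phi)_{i+1/2,j}v_{i+1/2}$, so the two reassemble into $-h\sum_{i=1}^{N}(D_x\phi)_{i-1/2,j}v_{i-1/2}$. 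Dividing by $h$ gives the claimed one-dimensional identity, and the $y$-direction is identical with the roles of $i$ and $j$ interchanged.

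Finally, the second identity $h^2(\phi\|\Delta_h\psi) = h^2(\Delta_h\phi\|\psi)$ follows at once by applying the first identity twice, once with $\phi$ and $\psi$ exchanged, together with the obvious symmetry of the form $(\nabla_h\phi\|\nabla_h\psi) = [D_x\phi\|D_x\psi]_x + [D_y\phi\|D_y\psi]_y$ in its two arguments. I expect the only delicate point to be the bookkeeping of the periodic boundary indices and of the $\tfrac12$-averaging in $[\cdot\|\cdot]_x$ and $[\cdot\|\cdot]_y$: once one verifies that periodicity forces the Abel-summation boundary terms to recombine into (rather than cancel against) the interior sum, the remaining computation is routine.
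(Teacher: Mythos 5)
Your proof is correct: the collapse of the $\tfrac12$-weighted edge inner products to plain edge sums under periodicity, the per-direction Abel summation with the boundary term recombining via $\phi_{0,j}=\phi_{N,j}$ and $(D_x\psi)_{N+1/2,j}=(D_x\psi)_{1/2,j}$, and the deduction of the second identity from the first plus symmetry of $(\nabla_h\cdot\|\nabla_h\cdot)$ all check out. The paper itself gives no proof, simply citing \cite{Wang1,Wang2}, and your argument is essentially the standard summation-by-parts derivation found there, so there is nothing to add.
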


For $\phi\in \mathcal{C}_{\bar{N}\times \bar{N}}$, we introduce the following norms
\begin{align*}
\|\phi\|_2:=\sqrt{h^2(\phi\|\phi)},\quad \|\phi\|_4:={}^4\sqrt{h^2(\phi^4\| \mathbf{1})},\quad \|\nabla_h\phi\|_2:=\sqrt{h^2(\nabla_h\phi\|\nabla_h\phi)}.
\end{align*}
Noting that the N-CH equation is mass conservative, without loss of generality, we assume that the mean of $u$ is zero and  consider the zero-mean periodic grid functions space:
\[
\mathcal{M}_h^0:=\{u\in \mathcal{C}_{N\times N}^{per}| \;(u\| \mathbf{1})=0\},
\]
which implies that $(-\Delta_h)|_{\mathcal{M}_h^0}$ is self-adjoint and positive definite. Thus, the inversion of $(-\Delta_h)^{-1}$ exists on $\mathcal{M}_h^0$. We can define the discrete norm $\|\cdot\|_{-1}$,
\[
\|u\|_{-1}:=\sqrt{h^2((-\Delta_h)^{-1}u\|u)},\quad \forall u\in \mathcal{M}_h^0.
\]

Denote $A_h\in \mathbb{R}^{N^2\times N^2}$ is the discrete matric of $(-\Delta_h)$. We define the matrix $D_h$ of order $N$:
\[
D_h=-\frac{1}{h^2}
\begin{pmatrix}
-2 & 1 & & & 1 \\
1 & -2 & 1 & &  \\
 & \ddots & \ddots & \ddots & \\
 & & 1 & -2 & 1 \\
1 & & & 1 & -2
\end{pmatrix}.
\]
By arranging the cell-centered points in lexicographical order, we deduce
\begin{equation}\label{eq29}
A_h=I\otimes D_h+D_h \otimes I,
\end{equation}
where $I$ is the identity matrix of order $N$. It is easy to see $A_h$ is a symmetric diagonally dominant matrix, and its eigenvalues are
\[
\lambda_{k,l}^{A_h}=\frac{2}{h^2}\left(2-\cos\left(\frac{2k\pi}{N}\right)-\cos\left(\frac{2l\pi}{N}\right)\right)\geq 0,\quad 1\leq k,l\leq N.
\]
Note that $A_h$ is positive semi-definite matrix and $0$ is a simple eigenvalue of $A_h$ with the eigenvector $E_1=(1,1,\cdots,1)^T$.

\subsection{Numerical approximation of the periodic convolution and key inequalities}
Here we present the definition of the discrete periodic convolution on a 2D periodic grid, and introduce some useful inequalities. Suppose $\phi\in \mathcal{C}_{N\times N}$ and $f\in V_{N\times N}$ are periodic. Thus, the discrete convolution operator $[f\circledast \phi]: V_{N\times N}\times \mathcal{C}_{N\times N}\to \mathcal{C}_{N\times N}$ is defined componentwise by \cite{Guan1}:
\begin{equation}\label{eq23}
    [f\circledast \phi]_{i,j}=h^2\sum_{k=1}^N\sum_{l=1}^Nf_{k+\frac{1}{2},l+\frac{1}{2}}\phi_{i-k,j-l}.
\end{equation}

Denote the $A_J\in \mathbb{R}^{N^2\times N^2}$ is a nonlocal matrix associated of the discrete convolution term. It follows from \eqref{eq23} that
\[
[J\circledast\mathbf{1}]\phi_{i,j}-[J\circledast \phi]_{i,j}=h^2\sum_{k=1}^N\sum_{l=1}^NJ_{k+\frac{1}{2},l+\frac{1}{2}}\phi_{i,j}-h^2\sum_{k=1}^N\sum_{l=1}^NJ_{k+\frac{1}{2},l+\frac{1}{2}}\phi_{i-k,j-l}.
\]
Then, we can explicitly represent the nonlocal matrix $A_J$ as the block circulant matrix:
\begin{equation}\label{eq28}
A_J=-h^{2}\cdot\operatorname{circ}\left(E_{N+\frac{1}{2}},E_{N-1+\frac{1}{2}},E_{N-2+\frac{1}{2}}\cdots,E_{1+\frac{1}{2}}\right),
\end{equation}
where
\[
E_{q}=
\begin{cases}
\operatorname{circ}\left(-c_0,J_{q,N-1+\frac{1}{2}},J_{q,N-2+\frac{1}{2}},\cdots,J_{q,1+\frac{1}{2}}\right), & q=N+\frac{1}{2}, \\
\operatorname{circ}\left(J_{q,N+\frac{1}{2}},J_{q,N-1+\frac{1}{2}},J_{q,N-2+\frac{1}{2}},\cdots,J_{q,1+\frac{1}{2}}\right), & q\neq N+\frac{1}{2},
\end{cases}
\]
where $c_0=\sum_{k=1}^N\sum_{l=1}^NJ_{k+\frac{1}{2},l+\frac{1}{2}}-J_{N+\frac{1}{2},N+\frac{1}{2}}$.
It is easy to verify that the matrix $A_J$ is symmetric and diagonally dominant and its eigenvalues can drive as:
\[
\lambda_{k,l}^{A_J}=h^2\sum_{i=1}^N\sum_{j=1}^NJ_{i+\frac{1}{2},j+\frac{1}{2}}\left(1-\cos\left(\frac{2\pi ki}{N}+\frac{2\pi lj}{N}\right)\right),\quad 1\leq k,l\leq N.
\]
We can see that the $A_J$ is a positive semi-definite matrix and has one eigenvalue is $0$. Noting that the sum of each row element of $A_J$ is $0$, the eigenvector corresponding to the eigenvalue of $0$ is $E_1$.
Since the continuous kernel function $\textrm{J}$ satisfies the assumption \eqref{eq3}, the consistency of the discrete convolution implies
\begin{equation}\label{eq-assum1}
    \gamma_0:=\varepsilon^2[J\circledast \mathbf{1}]-1>0.
\end{equation}

Next, we present two lemmas on some fundamental properties of discrete periodic convolution operators; These will be very helpful for our subsequent analysis.
\begin{lem}\label{lem1}\cite{Guan1}
Suppose $\phi,\psi\in \mathcal{C}_{\bar{N}\times \bar{N}}^{per}$ and $f\in V_{N\times N}$ is periodic and even, i.e., $f_{i+\frac{1}{2},j+\frac{1}{2}}=f_{-i+\frac{1}{2},-j+\frac{1}{2}}$, for all $i,j\in \mathbb{Z}$, then
\begin{equation}\label{eq-5}
        (\phi\| [f\circledast \psi])=(\psi\| [f\circledast \phi]).
\end{equation}
In addition, if $f\geq 0$ then
\begin{equation}\label{eq-6}
        |(\phi\| [f\circledast \psi])\leq [f\circledast \textbf{1}]\left(\frac{\alpha}{2}(\phi\|\phi)+\frac{1}{2\alpha}(\psi\|\psi)\right),\quad \forall \alpha >0.
    \end{equation}
\end{lem}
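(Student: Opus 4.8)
The plan is to prove both identities directly from the definition \eqref{eq23} of the discrete convolution, by writing out the defining quadruple sums and exploiting the translation invariance of periodic summation together with the evenness (resp.\ nonnegativity) of $f$; no functional-analytic machinery is needed.

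For \eqref{eq-5} I would start from
\[
(\phi\|[f\circledast\psi]) = h^2\sum_{i=1}^N\sum_{j=1}^N\sum_{k=1}^N\sum_{l=1}^N f_{k+\frac12,l+\frac12}\,\phi_{i,j}\,\psi_{i-k,j-l},
\]
and perform the change of summation indices $(i,j)\mapsto(i+k,j+l)$. Since all the grid functions are $\Omega$-periodic, each shifted index still ranges over a full period, so the right-hand side becomes $h^2\sum_{i,j,k,l} f_{k+\frac12,l+\frac12}\,\phi_{i+k,j+l}\,\psi_{i,j}$. Next I would substitute $(k,l)\mapsto(-k,-l)$ (again periodicity of $f$ keeps the range intact) and invoke the evenness hypothesis $f_{-k+\frac12,-l+\frac12}=f_{k+\frac12,l+\frac12}$ to rewrite this as $h^2\sum_{i,j,k,l} f_{k+\frac12,l+\frac12}\,\psi_{i,j}\,\phi_{i-k,j-l} = (\psi\|[f\circledast\phi])$, which is exactly \eqref{eq-5}.

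For \eqref{eq-6}, assuming $f\ge 0$, I would apply Young's inequality $|ab|\le \frac{\alpha}{2}a^2+\frac1{2\alpha}b^2$ termwise inside the same quadruple sum with $a=\phi_{i,j}$ and $b=\psi_{i-k,j-l}$, keeping the nonnegative weights $f_{k+\frac12,l+\frac12}$:
\[
|(\phi\|[f\circledast\psi])| \le h^2\sum_{i,j,k,l} f_{k+\frac12,l+\frac12}\left(\frac{\alpha}{2}\phi_{i,j}^2+\frac1{2\alpha}\psi_{i-k,j-l}^2\right).
\]
In the first group $\phi_{i,j}^2$ does not depend on $(k,l)$, so the $(k,l)$-sum factors out as $h^2\sum_{k,l}f_{k+\frac12,l+\frac12}=[f\circledast\mathbf1]$, a positive constant, leaving $\tfrac{\alpha}{2}[f\circledast\mathbf1](\phi\|\phi)$. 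In the second group, for each fixed $(k,l)$ the shift $(i,j)\mapsto(i+k,j+l)$ and periodicity give $\sum_{i,j}\psi_{i-k,j-l}^2=(\psi\|\psi)$, so that group equals $\tfrac1{2\alpha}[f\circledast\mathbf1](\psi\|\psi)$. Adding the two yields \eqref{eq-6}.

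I expect no genuine obstacle here: the only points requiring care are the bookkeeping of the $h^2$ prefactors and the observation that translating a summation index is legitimate precisely because $\phi,\psi\in\mathcal{C}^{per}_{\bar N\times\bar N}$ and $f\in V_{N\times N}$ is periodic, so after each shift the index again runs over exactly one full period. The evenness of $f$ enters only in the first part, and its nonnegativity only in the second.
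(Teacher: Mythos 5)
Your argument is correct: the index shift $(i,j)\mapsto(i+k,j+l)$ followed by $(k,l)\mapsto(-k,-l)$ with evenness of $f$ gives \eqref{eq-5}, and termwise Young's inequality with the nonnegative weights $f_{k+\frac12,l+\frac12}$ plus the same periodic shift gives \eqref{eq-6}, with the $h^2$ factors correctly assembling into $[f\circledast\mathbf{1}]$. The paper itself offers no proof (the lemma is quoted from the reference of Guan et al.), and your direct verification from the definition \eqref{eq23} is precisely the standard argument used there, so nothing further is needed.
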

\begin{lem}\label{lem2}\cite{Guan1}
If $\phi,\psi\in \mathcal{C}_{\bar{N}\times \bar{N}}^{per}$. Assume that $\textrm{f}\in C^{\infty}(\Omega)$ is periodic and even, and define its grid restriction via $f_{i+\frac{1}{2},j+\frac{1}{2}}:=\textrm{f}(x_{i+\frac{1}{2}},y_{j+\frac{1}{2}})$, so that $f\in V_{N\times N}$. We have
\begin{equation}
        -2h^2([f\circledast \psi]\| \Delta_h \phi)\leq \frac{C}{\alpha}\|\psi\|_2^2+\alpha\|\nabla_h \phi\|^2_2,\quad \forall \alpha>0,
    \end{equation}
    where $C>0$ depends on $\textrm{f}$ but is independent of $h$.
\end{lem}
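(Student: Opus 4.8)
The plan is to transfer the discrete Laplacian onto the convolution by summation by parts, split the resulting bilinear form with Young's inequality, and then bound the discrete gradient of $[f\circledast\psi]$ by $\|\psi\|_2$ with a constant independent of $h$. Concretely, by the summation-by-parts formulae in Proposition~2.1 (applied with $[f\circledast\psi]$ as the first argument, which is admissible since the discrete convolution of a periodic grid function is periodic), together with the discrete Cauchy--Schwarz inequality and $2ab\le a^2/\alpha+\alpha b^2$,
\[
-2h^2\big([f\circledast\psi]\big\|\Delta_h\phi\big)=2h^2\big(\nabla_h[f\circledast\psi]\big\|\nabla_h\phi\big)\le \frac{1}{\alpha}\big\|\nabla_h[f\circledast\psi]\big\|_2^2+\alpha\|\nabla_h\phi\|_2^2,\qquad\forall\,\alpha>0,
\]
so it remains to establish the $h$-independent estimate $\big\|\nabla_h[f\circledast\psi]\big\|_2^2\le C\|\psi\|_2^2$.

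The key step is a discrete ``integration by parts inside the convolution''. Differentiating \eqref{eq23} and re-indexing the $x$-shift from $\psi$ onto $f$ (which is legitimate precisely because both $f$ and $\psi$ are $N$-periodic), one gets
\[
D_x[f\circledast\psi]_{i+\frac12,j}=h^2\sum_{k=1}^N\sum_{l=1}^N\frac{f_{k+\frac32,l+\frac12}-f_{k+\frac12,l+\frac12}}{h}\,\psi_{i-k,j-l},
\]
and similarly for $D_y$. The point of the re-indexing is that the difference quotient now falls on the \emph{smooth} kernel, so no $\|\nabla_h\psi\|_2$ term is produced. Since $f$ is the grid restriction of the $\Omega$-periodic function $\textrm{f}\in C^\infty(\Omega)$, the mean value theorem bounds each such difference quotient by $\|\partial_x\textrm{f}\|_{L^\infty(\Omega)}$ (resp.\ $\|\partial_y\textrm{f}\|_{L^\infty(\Omega)}$), uniformly in $h,k,l$. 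A discrete Cauchy--Schwarz inequality together with $h^2N^2=|\Omega|$ then yields the pointwise bound $\big|D_x[f\circledast\psi]_{i+\frac12,j}\big|\le |\Omega|^{1/2}\|\partial_x\textrm{f}\|_{L^\infty(\Omega)}\|\psi\|_2$, and squaring and summing over the $O(N^2)$ edge points (using $h^2N^2=|\Omega|$ once more) gives $\big\|\nabla_h[f\circledast\psi]\big\|_2^2\le C\|\psi\|_2^2$ with $C=|\Omega|^2\big(\|\partial_x\textrm{f}\|_{L^\infty(\Omega)}^2+\|\partial_y\textrm{f}\|_{L^\infty(\Omega)}^2\big)$, independent of $h$. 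Combining this with the first display finishes the argument; the higher-dimensional cases are identical.

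The only delicate point is the re-indexing in the second step: one must verify carefully, using $N$-periodicity of $f$ and $\psi$ (and the staggered-grid index conventions for the vertex-centered kernel), that the spatial shift inside the discrete convolution can be moved from $\psi$ onto $f$, so that the difference quotient acts on the smooth function and not on $\psi$ — otherwise one is left with a harmless-looking but useless $\|\nabla_h\psi\|_2$. After that step the estimate is routine: the mean value theorem controls the kernel differences, and the powers of $h$ collapse into powers of $|\Omega|$.
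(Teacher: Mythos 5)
Your proof is correct: the summation-by-parts transfer, the re-indexing that moves the difference quotient onto the smooth periodic kernel (so the mean value theorem gives an $h$-independent bound), and the Cauchy--Schwarz/Young steps all check out, including the constant $C=|\Omega|^2\bigl(\|\partial_x \textrm{f}\|_{L^\infty}^2+\|\partial_y \textrm{f}\|_{L^\infty}^2\bigr)$. The paper itself does not prove this lemma but cites it from \cite{Guan1}, and your argument is essentially the standard one given there, so no further comparison is needed.
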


\section{Convergence to equilibrium for the first-order fully discrete schemes of the N-CH equation}\label{sec3}
In this section, we mainly consider three energy-stable first-order fully discrete schemes (the backward Euler scheme, convex splitting scheme, stabilized linear semi-implicit (SSI1) scheme)
for the N-CH equation and prove these schemes generate sequences converge to equilibrium as time tends to infinity.
For the first-order backward Euler scheme, we prove the uniquely solvability, mass conservation and energy stability. Subsequently, by leveraging the energy stability and the {\L}ojasiewicz inequality, we give detailed proof for the sequence generates by this scheme converges to a steady state as $n\to \infty$.
For the first-order convex splitting scheme and the SSI1 scheme, we also prove that the corresponding solution sequences converge to equilibrium.

Our proof primarily relies on energy stability and the following {\L}ojasiewicz inequality \eqref{eq8}.

\begin{Def}
We say $G\in C^1(\mathbb{R}^M,\mathbb{R})$ satisfies the {\L}ojasiewicz inequality at $U^{\ast}\in \mathbb{R}^M$ if there exists $\nu \in (0,1/2], \gamma_1>0$ and $\sigma>0$ such that for all $V\in \mathbb{R}^M$,
\begin{equation}\label{eq8}
     \|V-U^{\ast}\|<\sigma\Rightarrow |G(V)-G(U^{\ast})|^{1-\nu}\leq \gamma_1\|\nabla G(V)\|,
\end{equation}
where $\|\cdot\|$ denotes the Euclidean norm in $\mathbb{R}^{M}$ and $\nabla G(U)=(\partial_1 G(U),\cdots,\partial_{M} G(U))^T,$.
\end{Def}

\begin{thm}[{\L}ojasiewicz Theorem \cite{L1}]\label{lem3}
If $G\in C^1(\mathbb{R}^M,\mathbb{R})$ is real analytic, thus $G$ satisfies the {\L}ojasiewicz inequality at each point $U^{\ast}\in \mathbb{R}^M$.
\end{thm}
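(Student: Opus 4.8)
The plan is to reduce \eqref{eq8} to a local monomial normal form for $G$ and then estimate by hand. Replacing $G$ by $G-G(U^{\ast})$ affects neither $\nabla G$ nor \eqref{eq8}, and translating costs nothing, so I assume $U^{\ast}=0$ and $G(0)=0$. The non-critical case is immediate: if $\nabla G(0)\neq 0$, then by continuity of $\nabla G$ there are $\delta>0$ and $\sigma_1>0$ with $\|\nabla G(V)\|\geq\delta$ on $\{\|V\|<\sigma_1\}$, while by continuity of $G$ I may take $\sigma\leq\sigma_1$ so small that $|G(V)|^{1-\nu}\leq\delta$ on $\{\|V\|<\sigma\}$; then \eqref{eq8} holds with $\gamma_1=1$ for every $\nu\in(0,1/2]$. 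The case $G\equiv G(U^{\ast})$ near $U^{\ast}$ is trivial ($0\le 0$). So it remains to treat $\nabla G(0)=0$ with $G\not\equiv 0$ near $0$. One last reduction: if \eqref{eq8} holds with some exponent $\nu'\in(0,1)$, then after shrinking $\sigma$ so that $|G(V)|\leq 1$ it holds with $\nu=\min(\nu',1/2)\in(0,1/2]$, since $|G(V)|^{1-\nu}\leq|G(V)|^{1-\nu'}$ there.

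The core is a local structure theorem, which I would obtain from Hironaka's resolution of singularities applied to the analytic function $G$ on a neighborhood $\mathcal V_0$ of $0$: there exist a real analytic manifold $\mathcal M$ and a proper analytic surjection $\pi:\mathcal M\to\mathcal V_0$ such that around every point of the compact fibre $\pi^{-1}(\{0\})$ there are local coordinates $y=(y_1,\dots,y_M)$ in which $G\circ\pi(y)=\pm\,y^{\beta}\,v(y)$, with a multi-index $\beta\in\mathbb N^M$, $|\beta|\geq 1$, and an analytic unit $v$ ($v(0)\neq 0$). For such a monomial-times-unit a short computation of $\nabla(G\circ\pi)$ shows, on a small coordinate ball, $|G\circ\pi(y)|^{1-\nu_i}\leq C_i\,\|\nabla(G\circ\pi)(y)\|$ with $\nu_i:=\min(1/|\beta|,1/2)\in(0,1/2]$ depending only on that chart: heuristically the leading term of $\partial_j(y^{\beta}v)$ is $\beta_j\,y^{\beta-e_j}v(0)$, so roughly $\|\nabla(G\circ\pi)\|\gtrsim|y|^{\,|\beta|-1}$ and $|G\circ\pi|\lesssim|y|^{\,|\beta|}$, whence $\nu\leq 1/|\beta|$. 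Covering $\pi^{-1}(\{0\})$ by finitely many such charts, I set $\nu:=\min_i\nu_i$ and $C:=\max_i C_i$ to obtain one inequality valid on a neighborhood of $\pi^{-1}(\{0\})$ in $\mathcal M$.

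To conclude I push the estimate downstairs. Since $\nabla(G\circ\pi)(y)=D\pi(y)^{T}\,\nabla G(\pi(y))$ and $\|D\pi(y)\|$ is bounded on the compact set $\pi^{-1}(\overline{B_\rho})$, the chart inequalities give $|G(\pi(y))|^{1-\nu}\leq C'\,\|\nabla G(\pi(y))\|$; as $\pi$ is proper and maps onto a neighborhood of $0$, every $V$ near $0$ equals $\pi(y)$ for some such $y$, so this is exactly \eqref{eq8} at $U^{\ast}=0$ with $\gamma_1=C'$ (and $\nu$ lowered to $\leq 1/2$ as above). A self-contained variant avoiding Hironaka is the curve-selection argument: the one-variable function $\Phi(t):=\inf\{\|\nabla G(V)\|^{2}:\|V\|\leq\sigma,\ G(V)=t\}$ is subanalytic with $\Phi(0)=0$, hence by the Puiseux description of one-variable subanalytic functions it is comparable near $t=0$ to $|t|^{2(1-\nu)}$ for some rational $\nu\in(0,1)$ — finiteness of this exponent being precisely where analyticity of $G$ enters — and \eqref{eq8} then follows.

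The main obstacle is the deep geometric input: resolution of singularities, or in the alternative route the curve selection lemma together with the structure theory of subanalytic (and semianalytic) sets. These are substantial theorems of real analytic geometry, and in the present paper it is entirely legitimate to take Theorem \ref{lem3} as known and cite \cite{L1}. The only other delicate point is uniformity — that the finitely many chart exponents $\nu_i$ and constants $C_i$ genuinely combine into a single admissible pair $(\nu,\gamma_1)$ on one ball — which is exactly why the monomialization and the compactness of $\pi^{-1}(\{0\})$ are load-bearing rather than cosmetic.
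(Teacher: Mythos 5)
The paper does not prove Theorem \ref{lem3} at all: it is quoted as a classical result and attributed to {\L}ojasiewicz via the citation \cite{L1}, and the rest of the paper uses it as a black box (only the energy stability estimates and the verification of \textbf{H1}--\textbf{H2} are the paper's own work). Your proposal is therefore not comparable to a proof in the paper; what you have written is a sketch of the standard proof from the literature itself --- the reduction to the critical case, monomialization of $G$ via Hironaka's resolution of singularities on charts covering the compact fibre $\pi^{-1}(\{0\})$, the chart-wise gradient estimate with exponent $\nu=1/|\beta|$, and the push-down using $\nabla(G\circ\pi)=D\pi^{T}\,\nabla G\circ\pi$ together with properness and surjectivity of $\pi$; the alternative route you mention is essentially {\L}ojasiewicz's original semianalytic/curve-selection argument. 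As an outline this is correct, and your own assessment is the right one: in this paper it is legitimate to cite the theorem rather than reprove it, since the genuine content (resolution of singularities, or the structure theory of subanalytic sets plus local finiteness of critical values, which your $\Phi(t)$ variant also silently needs) lies far outside the paper's scope. One local imprecision worth fixing if you ever write the chart estimate out: the heuristic lower bound $\|\nabla(y^{\beta}v)\|\gtrsim|y|^{|\beta|-1}$ is false pointwise (take $y_1^{2}y_2^{2}$ on the $y_2=0$ axis), so it cannot be the basis of the estimate; the correct argument picks the index $j$ with $\beta_j\geq 1$ minimizing $|y_j|$, notes $|\partial_j(y^{\beta}v)|\geq c\,|y^{\beta}|/|y_j|$ there, and uses the geometric-mean bound $\min\{|y_j|:\beta_j\geq1\}\leq|y^{\beta}|^{1/|\beta|}$ to get $\|\nabla(y^{\beta}v)\|\geq c\,|y^{\beta}|^{1-1/|\beta|}$, which is exactly the chart inequality you need; the rest of your uniformization over finitely many charts and the passage from exponent $\nu'$ to $\nu\in(0,1/2]$ are fine.
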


The following theorem is the key to proving convergence to equilibrium.
\begin{thm}\label{the1}\cite{Pierre1}
If $G\in C^1(\mathbb{R}^M,\mathbb{R})$ is real analytic. Consider a bounded sequence $(U^n)_{n\geq 0}$ in $\mathbb{R}^M$ which satifies the following conditions:
\begin{enumerate}
    \item[\textbf{H1:}] There exists a constant $c_2>0$ such that for every $n \in \mathbb{N}$,
    \[
    G(U^n)-G(U^{n+1})\geq c_2\|U^{n+1}-U^n\|^2;
    \]
    \item[\textbf{H2:}] There exists a constant $c_3>0$ such that for every $n \in \mathbb{N}$,
    \[
    \|\nabla G(U^{n+1})\|\leq c_3\|U^{n+1}-U^n\|.
    \]
\end{enumerate}
Thus, the whole sequence $(U^n)_{n\geq 0}$ converges in $\mathbb{R}^M$.
\end{thm}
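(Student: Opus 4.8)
The plan is to adapt the classical {\L}ojasiewicz--Simon trapping argument to this discrete, gradient-like setting. Since $(U^n)_{n\ge0}$ is bounded, it admits a cluster point $U^\ast$, i.e. $U^{n_k}\to U^\ast$ along some subsequence. By \textbf{H1} the real sequence $G(U^n)$ is non-increasing, and since $(U^n)$ stays in a bounded set and $G$ is continuous, $G(U^n)$ is bounded below; hence it converges to a limit which, by continuity, equals $G(U^\ast)$. Replacing $G$ by $G-G(U^\ast)$ (which preserves both real analyticity and the {\L}ojasiewicz inequality \eqref{eq8}) we may assume $G(U^\ast)=0$, so that $e_n:=G(U^n)\ge0$ is non-increasing with $e_n\to0$. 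If $e_N=0$ for some $N$, then $e_n\equiv0$ for $n\ge N$ and \textbf{H1} forces $U^{n+1}=U^n$ for $n\ge N$, so the sequence is eventually constant and we are done; thus we may assume $e_n>0$ for all $n$. Finally, summing \textbf{H1} yields $c_2\sum_{n\ge0}a_n^2\le e_0$ with $a_n:=\|U^{n+1}-U^n\|$, so in particular $a_n\to0$.

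The core of the argument is to upgrade square-summability of $(a_n)$ to summability. Let $\nu\in(0,1/2]$, $\gamma_1>0$, $\sigma>0$ be the constants in \eqref{eq8} for $G$ at $U^\ast$, available since $G$ is real analytic by Theorem~\ref{lem3}, and set $\varphi(s):=\nu^{-1}s^{\nu}$, which is increasing and concave on $(0,\infty)$. Suppose $U^n\in B(U^\ast,\sigma)$ for $n$ in some range. Then \eqref{eq8} at $V=U^n$ together with \textbf{H2} (applied at index $n-1\!\to\!n$, giving $\|\nabla G(U^n)\|\le c_3 a_{n-1}$) yields $e_n^{1-\nu}\le\gamma_1 c_3\,a_{n-1}$, while concavity of $\varphi$ and \textbf{H1} give
\[
\varphi(e_n)-\varphi(e_{n+1})\ \ge\ e_n^{\nu-1}\,(e_n-e_{n+1})\ \ge\ \frac{c_2\,a_n^2}{\gamma_1 c_3\,a_{n-1}},
\]
i.e. $a_n^2\le C\,a_{n-1}\bigl(\varphi(e_n)-\varphi(e_{n+1})\bigr)$ with $C=\gamma_1 c_3/c_2$. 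Applying $\sqrt{xy}\le\tfrac12(x+y)$ gives $2a_n\le a_{n-1}+C\bigl(\varphi(e_n)-\varphi(e_{n+1})\bigr)$, and telescoping from $n=K$ to $n=m$ produces the bound $\sum_{n=K}^{m}a_n\le a_{K-1}+C\,\varphi(e_K)$, uniform in $m$. (Should $a_{n-1}=0$ occur in the range, \textbf{H2} and \eqref{eq8} force $e_n=0$, hence the sequence is eventually constant; so we may assume $a_{n-1}>0$ throughout.)

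It remains to produce a starting index $K$ from which the orbit never leaves $B(U^\ast,\sigma)$. Using $U^{n_k}\to U^\ast$, $a_n\to0$ and $\varphi(e_n)\to0$, pick $K=n_k$ large enough that $\|U^K-U^\ast\|+a_{K-1}+C\,\varphi(e_K)<\sigma$. One then proves by induction on $m$ that $U^n\in B(U^\ast,\sigma)$ for all $K\le n\le m$: granting this up to index $m$, the telescoped estimate is licit up to $m$, whence $\|U^{m+1}-U^\ast\|\le\|U^K-U^\ast\|+\sum_{n=K}^{m}a_n\le\|U^K-U^\ast\|+a_{K-1}+C\,\varphi(e_K)<\sigma$, closing the induction. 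Consequently $\sum_{n\ge K}a_n<\infty$, so $(U^n)$ is Cauchy in $\mathbb{R}^M$ and converges to some $\bar U$; since also $U^{n_k}\to U^\ast$, we get $\bar U=U^\ast$, so the whole sequence converges.

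I expect the trapping step to be the main obstacle: \textbf{H1} a priori only delivers $\sum a_n^2<\infty$, and \eqref{eq8} is only local near $U^\ast$, so one cannot separately establish summability and confinement — the chicken-and-egg is resolved only by the simultaneous induction above, which in turn hinges on the precise index shift between $a_n$ and $a_{n-1}$ forced by the form of \textbf{H2}, and on the concavity of the desingularizing function $\varphi$ (which is exactly where the restriction $\nu\le 1/2<1$, hence $\nu<1$, is used).
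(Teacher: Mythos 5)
Your proposal is correct: the paper does not prove Theorem \ref{the1} itself but quotes it from \cite{Pierre1}, and your argument --- monotone energy via \textbf{H1}, the {\L}ojasiewicz inequality at a cluster point combined with \textbf{H2} (with the index shift $\|\nabla G(U^n)\|\le c_3\|U^n-U^{n-1}\|$), the concave desingularizing function $s^{\nu}/\nu$, the AM--GM/telescoping estimate yielding summability of the increments, and the trapping induction keeping the orbit in $B(U^\ast,\sigma)$ --- is precisely the standard proof given in that reference. No gaps worth flagging; the degenerate cases ($e_N=0$ or a vanishing increment) are handled adequately.
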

\subsection{The first-order backward Euler scheme}
In this subsection, we focus on the fully discrete first-order backward Euler scheme for the N-CH equation. We prove the unique solvability, mass conservation and energy stability of the proposed numerical scheme. Futhermore, we utilize the energy stability, the {\L}ojasiewicz inequality \eqref{eq8} and Theorem \ref{the1} to demonstrate that the sequence generated by this scheme converges to an equilibrium as $n\to \infty$.

The fully discrete first-order backward Euler scheme for the N-CH equation is constructed as follows: let $u_h^0=(u_{ij}^0)\in \mathcal{C}_{N\times N}^{per}$, and for $n\geq 0$, let $u_h^{n+1}=(u_{ij}^{n+1})\in \mathcal{C}_{N\times N}^{per},\; \omega_h^{n+1}=(\omega_{ij}^{n+1}) \in \mathcal{C}_{N\times N}^{per}$ solve
\begin{eqnarray}\label{eq-2}
\left\{
\begin{array}{lllll}
\frac{u_h^{n+1}-u_h^n}{\tau}= \Delta_h \omega_h^{n+1}, \hspace{0.7cm}\ \\
\omega_h^{n+1} =F'(u_h^{n+1})+\varepsilon^2 [J\circledast \mathbf{1}]u_h^{n+1}-\varepsilon^2[J\circledast u_h^{n+1}].
\end{array}
\right.
\end{eqnarray}
The corresponding discrete energy is defined as
\begin{equation}\label{1}
    E_h(u_h)=h^2(F(u_h)\| \mathbf{1})+\frac{\varepsilon^2 [J\circledast \mathbf{1}]}{2}\| u_h\|_2^2-\frac{\varepsilon^2}{2}h^2(u_h\| [J\circledast u_h]), \quad \forall u_h\in \mathcal{C}_{N\times N}^{per}.
\end{equation}

The following theorem illustrates that the first-order backward Euler scheme \eqref{eq-2} is mass conservative, uniquely solvable and energy stable.
\begin{thm}\label{the4}
The total mass for the scheme \eqref{eq-2} is conserved, i.e., $ (u_h^{n+1}-u_h^n\|\textbf{1} )=0$ for $n\geq 0$.
 Moreover, if the time step size $\tau$ satifies
\begin{equation}\label{tau1}
\tau\leq \frac{2\gamma_0}{C_\textrm{J}\varepsilon^4},
\end{equation}
where $C_\textrm{J}>0$ is a constant that depends on kernel function $\textrm{J}$ but is independent of $h$. Thus, the first-order backward Euler scheme \eqref{eq-2} is uniquely solvable and energy stable, i.e.,
\[
E_h(u_h^{n+1})\leq E_h(u_h^n),\quad \forall n\geq 0.
\]
\end{thm}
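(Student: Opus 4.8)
The plan is to establish the three assertions in sequence: mass conservation, unique solvability, and energy stability. Mass conservation is immediate: taking the discrete inner product of the first equation in \eqref{eq-2} with the constant grid function $\mathbf{1}$ and using the summation-by-parts formula $h^2(\mathbf{1}\|\Delta_h\omega_h^{n+1}) = -h^2(\nabla_h\mathbf{1}\|\nabla_h\omega_h^{n+1}) = 0$ (since $\nabla_h\mathbf{1}=0$), we get $(u_h^{n+1}-u_h^n\|\mathbf{1})=0$. Hence by induction $u_h^n$ has the same mass as $u_h^0$ for all $n$, so in particular we may work on the zero-mean space $\mathcal{M}_h^0$ where $(-\Delta_h)^{-1}$ is well-defined and the $\|\cdot\|_{-1}$ norm makes sense.

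For unique solvability, the strategy is to eliminate $\omega_h^{n+1}$ and recast \eqref{eq-2} as a variational problem. Writing $v=u_h^{n+1}-u_h^n\in\mathcal{M}_h^0$, the scheme becomes $v = \tau\Delta_h\big(F'(u_h^n+v) + \varepsilon^2[J\circledast\mathbf{1}](u_h^n+v) - \varepsilon^2[J\circledast(u_h^n+v)]\big)$, i.e. $(-\Delta_h)^{-1}v/\tau + F'(u_h^n+v) + \varepsilon^2[J\circledast\mathbf{1}](u_h^n+v) - \varepsilon^2[J\circledast(u_h^n+v)] = \text{const}$. This is the Euler--Lagrange equation of the strictly convex functional
\[
\mathcal{G}(v) = \frac{1}{2\tau}\|v\|_{-1}^2 + h^2\big(F(u_h^n+v)\,\big\|\,\mathbf{1}\big) + \frac{\varepsilon^2[J\circledast\mathbf{1}]}{2}\|u_h^n+v\|_2^2 - \frac{\varepsilon^2}{2}h^2\big((u_h^n+v)\,\big\|\,[J\circledast(u_h^n+v)]\big)
\]
over $v\in\mathcal{M}_h^0$. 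The first term is strictly convex; $F(r)=\tfrac14(r^2-1)^2$ is not convex, but the quadratic part $\frac{\varepsilon^2[J\circledast\mathbf{1}]}{2}\|u_h^n+v\|_2^2 - \frac{\varepsilon^2}{2}h^2((u_h^n+v)\|[J\circledast(u_h^n+v)])$ is nonnegative since $A_J$ is positive semi-definite (equivalently by \eqref{eq-6} with $\alpha=1$), and the combined contribution of the double-well and nonlocal pieces can be controlled. The key point is that $\mathcal{G}$ is coercive on $\mathcal{M}_h^0$ (the $\|v\|_{-1}^2$ term dominates after the polynomial $F$ is bounded below) and strictly convex — here strict convexity is what requires the time-step restriction: the Hessian contribution from $F$ is bounded below by $-$const (since $F''(r)=3r^2-1\geq -1$), while the $\frac{1}{\tau}(-\Delta_h)^{-1}$ term contributes something large when $\tau$ is small; combining $F''\geq-1$ with $\varepsilon^2[J\circledast\mathbf1]-1=\gamma_0>0$ one actually gets convexity even without a step restriction for the standard splitting, but for the pure backward Euler treatment of $F'$ one uses \eqref{tau1} to absorb the negative part. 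A continuous, coercive, strictly convex functional on a finite-dimensional affine space attains a unique minimizer, giving existence and uniqueness of $u_h^{n+1}$, and then $\omega_h^{n+1}$ is recovered (uniquely up to additive constant, but its gradient is determined, which is all that matters).

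For energy stability, the plan is the standard gradient-flow testing argument. Take the inner product of the first equation in \eqref{eq-2} with $\tau\omega_h^{n+1}$ and of the second with $u_h^{n+1}-u_h^n$, add, and use summation-by-parts to get $h^2(u_h^{n+1}-u_h^n\|\omega_h^{n+1}) = \tau h^2(\Delta_h\omega_h^{n+1}\|\omega_h^{n+1}) = -\tau\|\nabla_h\omega_h^{n+1}\|_2^2$. On the other hand, $h^2(\omega_h^{n+1}\|u_h^{n+1}-u_h^n) = h^2(F'(u_h^{n+1})\|u_h^{n+1}-u_h^n) + \varepsilon^2[J\circledast\mathbf{1}]h^2(u_h^{n+1}\|u_h^{n+1}-u_h^n) - \varepsilon^2 h^2([J\circledast u_h^{n+1}]\|u_h^{n+1}-u_h^n)$. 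Now I expand each piece as a difference of energy contributions plus a remainder: for the double-well, $F(b)-F(a) \leq F'(b)(b-a) + \frac{\|F''\|_\infty\text{-type bound}}{2}(b-a)^2$ — more precisely, a Taylor expansion gives $F(a)-F(b) = F'(b)(a-b) + \frac12 F''(\xi)(a-b)^2$ and one bounds $F''(\xi)$ using the a priori $L^\infty$ bound on iterates that follows from energy decrease; for the nonlocal quadratic terms, the identity $h^2(\psi\|[J\circledast\psi]) $ is a quadratic form with matrix $A_J\succeq0$, and the polarization identity yields exactly $\frac{\varepsilon^2[J\circledast\mathbf1]}{2}(\|u^{n+1}\|_2^2-\|u^n\|_2^2) - \frac{\varepsilon^2}{2}(\ldots) = $ energy difference $ + \frac{1}{2}$(positive quadratic in $u^{n+1}-u^n$ coming from $\varepsilon^2([J\circledast\mathbf1]I - A_J/h^2)\succ0$, in fact $\succeq\gamma_0$). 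Assembling, $E_h(u_h^{n+1}) - E_h(u_h^n) \leq -\tau\|\nabla_h\omega_h^{n+1}\|_2^2 - \big(\text{nonneg. quadratic from nonlocal part}\big) + \big(\text{neg. quadratic from }F\big)$, and the time-step condition \eqref{tau1} with the constant $C_\textrm{J}$ (which, via Lemma \ref{lem2} or a direct estimate of $F''$ against $\gamma_0$ and $\|\nabla_h\omega_h^{n+1}\|_2^2\geq$ const$\cdot\|u_h^{n+1}-u_h^n\|_2^2/\tau^2$-type inverse inequality) ensures the negative remainder is absorbed, yielding $E_h(u_h^{n+1})\leq E_h(u_h^n)$.

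The main obstacle is the energy-stability step: controlling the non-convex $F$-contribution. One must either (i) obtain a uniform $L^\infty$ bound on the iterates $u_h^n$ — which circularly depends on energy stability unless one bootstraps carefully, e.g. proving a conditional bound and closing it — or (ii) avoid the $L^\infty$ bound by exploiting the coercivity of $E_h$ plus the nonlocal positive term $\gamma_0\|u_h^{n+1}-u_h^n\|_2^2$ to dominate the worst-case $-\frac12(u_h^{n+1}-u_h^n)^2$-type terms from $F$, which is presumably where the explicit constant $C_\textrm{J}$ and the factor $\varepsilon^4$ in \eqref{tau1} enter (the $\varepsilon^4$ suggesting the relevant bad term is $\varepsilon^2$ times something already controlled by $\varepsilon^2\gamma_0$). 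I expect the paper uses the $\gamma_0>0$ assumption \eqref{eq-assum1} crucially here, exactly as in the continuous energy-dissipation estimate, so that the "missing high-order diffusion" is compensated by the positivity of $\varepsilon^2[J\circledast\mathbf1]-1$.
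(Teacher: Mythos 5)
Your mass-conservation step and your variational setup for unique solvability are essentially the paper's: the paper minimizes $G[z]=\frac{1}{2\tau}\|z-u_h^n\|_{-1}^2+E_h(z)$ over the mean-constrained space, shows coercivity from the quartic term, and shows strict convexity of the second variation. But you misplace where the time-step restriction enters and, more importantly, you miss that this minimization already finishes the theorem. In the paper the restriction \eqref{tau1} is \emph{not} used to let the $\frac{1}{\tau}\|\psi\|_{-1}^2$ term ``dominate'' the $-\|\psi\|_2^2$ coming from $F''\geq-1$ (that comparison is $h$-dependent, since $\|\psi\|_{-1}\geq\lambda_M^{-1/2}\|\psi\|_2$ costs a factor $\sim h^2$); the $-\|\psi\|_2^2$ is absorbed by $\varepsilon^2[J\circledast\mathbf{1}]\|\psi\|_2^2$ via $\gamma_0>0$, and \eqref{tau1} is needed solely to control the nonlocal cross term $-\varepsilon^2h^2([J\circledast\psi]\|\psi)$ through Lemma \ref{lem2}, which splits it as $\frac{C_\textrm{J}\tau\varepsilon^2}{2}\|\psi\|_2^2+\frac{1}{2\tau\varepsilon^2}\|\psi\|_{-1}^2$ and yields the $h$-independent condition $\gamma_0-\frac{C_\textrm{J}\tau\varepsilon^4}{2}\geq0$. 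Your sketch never invokes Lemma \ref{lem2}, so as written you cannot produce the stated $h$-independent constant $C_\textrm{J}$.

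The genuine gap is your energy-stability step. Once $u_h^{n+1}$ is identified as the unique minimizer of $G$, stability is immediate: $\frac{1}{2\tau}\|u_h^{n+1}-u_h^n\|_{-1}^2+E_h(u_h^{n+1})=G[u_h^{n+1}]\leq G[u_h^n]=E_h(u_h^n)$, with no testing, no Taylor expansion, and no bound on the iterates. Your separate testing-plus-Taylor argument, by contrast, is left with the unresolved ``main obstacle'' you yourself flag, and the proposed fixes do not work as stated: you do not need an a priori $L^\infty$ bound on $u_h^n$ to control $F''(\xi)$ (what the expansion requires is the global \emph{lower} bound $F''\geq-1$, which costs only $-\frac12\|u_h^{n+1}-u_h^n\|_2^2$), and the leftover negative term cannot be absorbed by an ``inverse inequality'' on $\|\nabla_h\omega_h^{n+1}\|_2^2$, since from the first equation $\|\nabla_h\omega_h^{n+1}\|_2^2=\tau^{-2}\|u_h^{n+1}-u_h^n\|_{-1}^2$, which controls only the $\|\cdot\|_{-1}$ norm, not $\|\cdot\|_2$, uniformly in $h$. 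The testing route can be repaired exactly as the paper does for BDF2 (Theorem \ref{Th2}): keep $\frac{\varepsilon^2[J\circledast\mathbf{1}]}{2}\|\delta u_h^{n+1}\|_2^2$ from the quadratic identity, estimate $h^2([J\circledast\delta u_h^{n+1}]\|\delta u_h^{n+1}])$ by Lemma \ref{lem2} as in \eqref{eq4}, and use $\gamma_0$ together with \eqref{tau1} to make the combined coefficient nonnegative; but as proposed, this step is incomplete and partly wrong, whereas the convex-minimization argument you already set up delivers the result for free.
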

\begin{proof}
Taking the discrete inner product of the first equation in \eqref{eq-2} with $\mathbf{1}$, we obtain $(u_h^{n+1} -u_h^n\|\mathbf{1})=0$, which implies the scheme \eqref{eq-2} is mass conservative.
Next, We prove that the scheme \eqref{eq-2} is uniquely solvable and energy stable.

We define the energy functional $G[z]$ on the space $\mathcal{M}_h^{\ast}=\{z\in \mathcal{C}_{N\times N}^{per}| \; (z\|\mathbf{1} )= ( u_h^n \| \mathbf{1})\}$,
\[
    G[z]=\frac{1}{2\tau}\|z-u_h^n\|_{-1}^2+E_h(u_h).
\]
According to Lemma \ref{lem1}, we have
\begin{align*}
    G[z] &=\frac{1}{2\tau}\|z-u_h^n\|_{-1}^2+\frac{\varepsilon^2[J\circledast \mathbf{1}]}{2}\|z\|_2^2-\frac{\varepsilon^2 h^2}{2}(z\| [J\circledast z])+\frac{1}{4}\|z\|_4^4-\frac{1}{2}\|z\|_2^2+\frac{1}{4}|\Omega|\\
    &\geq \frac{1}{2\tau}\|z-u_h^n\|_{-1}^2+\frac{1}{4}\|z\|_4^4-\frac{1}{2}\|z\|_2^2+\frac{1}{4}|\Omega|\\
    &\geq \frac{1}{4}(4\|z\|_2^2-4|\Omega|)-\frac{1}{2}\|z\|_2^2+\frac{1}{4}|\Omega|\\
    &=\frac{1}{2}\|z\|_2^2-\frac{3}{4} |\Omega|.
\end{align*}
Clearly, the functional $G[z]$ is coercive; hence, a minimum of $G[z]$ exists. We now prove that $G[z]$ is strictly convex for any $s\in \mathbb{R}$ and $\psi \in \mathcal{M}_h^0$, which will imply the uniqueness of the minimizer. Simple calculations yield
\begin{equation}\label{eq5}
\frac{\diff^2 G}{\diff s^2}[z+s\psi]\big|_{s=0}=\frac{1}{\tau}\|\psi\|_{-1}^2+\varepsilon^2 [J\circledast \mathbf{1}]\|\psi\|^2_2-\varepsilon^2 h^2([J\circledast \psi]\|\psi)+3\|z\psi\|_2^2-\|\psi\|_2^2.
\end{equation}
An application of Lemma \ref{lem2} to the term $([J\circledast \psi]\|\psi)$ in \eqref{eq5} shows
\begin{align}\label{eq4}
h^2([J\circledast \psi]\|\psi)&=-h^2([J\circledast \psi]\|\Delta_h((-\Delta_h)^{-1}\psi))\nonumber\\
&\leq \frac{C_\textrm{J}\tau\varepsilon^2}{2} \|\psi\|_2^2+\frac{1}{2\tau\varepsilon^2}\|\nabla_h(-\Delta_h)^{-1}\psi\|_2^2\nonumber\\
&\leq \frac{C_\textrm{J}\tau\varepsilon^2}{2} \|\psi\|_2^2+\frac{1}{2\tau\varepsilon^2}\|\psi\|_{-1}^2.
\end{align}
Combining \eqref{eq5}-\eqref{eq4} and using the condition that $\tau$ satisfies \eqref{tau1}, we obtain,
\begin{align*}
\frac{\diff^2 G}{\diff s^2}[z+s\psi]\big|_{s=0}&\geq \frac{1}{2\tau}\|\psi\|_{-1}^2+\left(\varepsilon^2 [J\circledast \mathbf{1}]-\frac{C_\textrm{J}\tau\varepsilon^4}{2}-1\right)\|\psi\|_2^2+3\|z\psi\|_2^2\\
&=\frac{1}{2\tau}\|\psi\|_{-1}^2+\left(\gamma_0-\frac{C_\textrm{J}\tau\varepsilon^4}{2}\right)\|\psi\|_2^2+3\|z\psi\|_2^2>0,
\end{align*}
where we have used the assumption \eqref{eq-assum1}. Therefore, the functional $G[z]$ has a unique minimizer $u_h^{n+1}$ on $\mathcal{M}_h^{\ast}$, and this minimizer is also the unique solution of the scheme \eqref{eq-2}, and we have,
\begin{equation}\label{eq33}
G[u_h^{n+1}]=\frac{1}{2\tau}\|u_h^{n+1}-u_h^n\|_{-1}^2+E_h(u_h^{n+1})\leq G[u_h^n]=E_h(u_h^n),
\end{equation}
this implies that $E_h(u_h^{n+1})\leq E_h(u_h^n)$.
This concludes the proof.
\end{proof}

Next, we will use the energy stability, {\L}ojasiewicz inequality \eqref{eq8} and Theorem \ref{the1} to prove that the sequence produced by the fully discrete first-order backward Euler scheme \eqref{eq-2} converges to an equilibrium as $n\to \infty$.
\begin{thm}\label{the2}
If $\tau$ satisfies condition \eqref{tau1}, every sequence $(u_h^n,\omega_h^n)_{n\geq 0}$ which complies with the scheme \eqref{eq-2} converges to a steady state $(u_h^\infty,\omega_h^\infty)$, i.e., a solution of
\begin{equation}\label{eq2222}
\begin{cases}
    (u_h^\infty\|\mathbf{1})=(u_h^0\|\mathbf{1}),\\
    \omega_h^\infty =F'(u_h^\infty)+\varepsilon^2 [J\circledast 1]u_h^\infty-\varepsilon^2[J\circledast u_h^\infty].
\end{cases}
\end{equation}
\end{thm}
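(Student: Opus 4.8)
The plan is to apply Theorem~\ref{the1} with $M = N^2$, $G = E_h$ restricted to the affine subspace $\mathcal{M}_h^{\ast}$ of grid functions with the prescribed mass, and $U^n = u_h^n$. Since $F$ in \eqref{pot1} is a polynomial, $E_h$ is a real-analytic function of the $N^2$ nodal values, so the {\L}ojasiewicz hypothesis of Theorem~\ref{the1} is satisfied. Thus the task reduces to verifying the three prerequisites of Theorem~\ref{the1}: boundedness of $(u_h^n)_{n\geq 0}$, condition \textbf{H1}, and condition \textbf{H2}. I would handle these in order.

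For boundedness, I would use energy stability from Theorem~\ref{the4}: $E_h(u_h^n) \leq E_h(u_h^0)$ for all $n$. Combined with the coercivity estimate already established in the proof of Theorem~\ref{the4}, namely $G[z] \geq \tfrac12\|z\|_2^2 - \tfrac34|\Omega|$ (and the analogous lower bound $E_h(u_h) \geq \tfrac14\|u_h\|_4^4 - \tfrac12\|u_h\|_2^2 + \tfrac14|\Omega|$, which is coercive in the $\|\cdot\|_2$ norm on a finite-dimensional space), the uniform energy bound forces $\|u_h^n\|_2$ to stay bounded, hence $(u_h^n)$ lies in a compact subset of $\mathcal{M}_h^{\ast}$. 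For \textbf{H1}, I would revisit the variational inequality \eqref{eq33}: $\tfrac{1}{2\tau}\|u_h^{n+1}-u_h^n\|_{-1}^2 + E_h(u_h^{n+1}) \leq E_h(u_h^n)$, which gives $E_h(u_h^n) - E_h(u_h^{n+1}) \geq \tfrac{1}{2\tau}\|u_h^{n+1}-u_h^n\|_{-1}^2$. Since $u_h^{n+1}-u_h^n \in \mathcal{M}_h^0$ (by mass conservation) and $(-\Delta_h)^{-1}$ restricted to $\mathcal{M}_h^0$ has a smallest eigenvalue bounded below by a positive constant $c_h$ (depending only on $h$, $N$, $L$ via the spectrum of $A_h$), we get $\|u_h^{n+1}-u_h^n\|_{-1}^2 \geq c_h \|u_h^{n+1}-u_h^n\|_2^2$, yielding \textbf{H1} with $c_2 = c_h/(2\tau)$.

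For \textbf{H2}, the point is that $\omega_h^{n+1}$, as defined by the second equation of \eqref{eq-2}, is (up to an additive constant determined by the mass constraint) precisely the gradient of $E_h$ on $\mathcal{M}_h^{\ast}$: indeed $\tfrac{\delta E_h}{\delta u_h} = F'(u_h) + \varepsilon^2[J\circledast\mathbf{1}]u_h - \varepsilon^2[J\circledast u_h]$, and on the affine space the projected gradient is this quantity minus its mean. From the first equation $u_h^{n+1}-u_h^n = \tau\,\Delta_h\omega_h^{n+1}$, so $\|\nabla_h\omega_h^{n+1}\|_2^2 = -h^2(\omega_h^{n+1}\|\Delta_h\omega_h^{n+1}) = -\tfrac{1}{\tau}h^2(\omega_h^{n+1}\|u_h^{n+1}-u_h^n) \leq \tfrac{1}{\tau}\|\omega_h^{n+1}-\overline{\omega_h^{n+1}}\|_{?}\|u_h^{n+1}-u_h^n\|_{?}$; more cleanly, writing $\Delta_h\omega_h^{n+1} = \tfrac{1}{\tau}(u_h^{n+1}-u_h^n)$ and using $(-\Delta_h)^{-1}$ on $\mathcal{M}_h^0$, one gets $\|\nabla G(u_h^{n+1})\|_2 = \|\nabla_h\omega_h^{n+1}\|_2 \lesssim \tfrac{1}{\tau}\|u_h^{n+1}-u_h^n\|_{-1} \leq \tfrac{1}{\tau}\|u_h^{n+1}-u_h^n\|_2$ (the last step using that $\|\cdot\|_{-1} \leq C\|\cdot\|_2$ on the finite-dimensional space $\mathcal{M}_h^0$). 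Translating between the operator-norm $\|\nabla_h\omega_h^{n+1}\|_2$ and the Euclidean norm $\|\nabla G(U^{n+1})\|$ of the gradient of the scalar function $G$ on $\mathbb{R}^{N^2}$ is a routine equivalence of norms on a fixed finite-dimensional space, absorbed into the constant $c_3$. Once \textbf{H1}, \textbf{H2} and boundedness hold, Theorem~\ref{the1} yields convergence of $(u_h^n)$ to some $u_h^\infty$; passing to the limit in \eqref{eq-2} (all operations being continuous in finite dimensions) shows $u_h^\infty$ satisfies $\Delta_h\omega_h^\infty = 0$ with $\omega_h^\infty$ given by the second line of \eqref{eq2222}, and mass conservation passes to the limit as well, giving \eqref{eq2222}; since $\omega_h^{n+1}$ depends continuously on $u_h^{n+1}$, convergence of $\omega_h^n$ follows too.

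The main obstacle I anticipate is the careful bookkeeping in \textbf{H2}: one must correctly identify $\nabla G(U^{n+1})$ — the Euclidean gradient of the mass-constrained energy on $\mathbb{R}^{N^2}$ — with $\nabla_h\omega_h^{n+1}$ (or with $\omega_h^{n+1}$ projected onto zero mean), and then exploit the scheme's first equation together with the equivalence of the $\|\cdot\|_{-1}$, $\|\cdot\|_2$ and Euclidean norms on the fixed finite-dimensional space; all the $h$-dependent constants are harmless since $h$ is fixed throughout. A minor subtlety is ensuring the mean of $\omega_h^{n+1}$ is handled consistently so that what appears in \textbf{H2} is genuinely the gradient of the restricted functional and not the full unconstrained one.
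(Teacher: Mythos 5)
Your proposal is correct and follows essentially the same route as the paper: boundedness from the energy decay \eqref{eq33}, \textbf{H1} from that same inequality plus the positive-definiteness of $(-\Delta_h)^{-1}$ on the zero-mean space, \textbf{H2} from inverting the first equation of \eqref{eq-2} with $(-\Delta_h)^{-1}$ (the gradient-flow structure), and then Theorem~\ref{the1} with the {\L}ojasiewicz property of the polynomial energy. The paper merely makes your ``restriction to the mass-constrained affine subspace'' explicit via a matrix reformulation and an orthonormal basis change starting with $E_1/\|E_1\|$, which is the same identification of the projected gradient (your $\omega_h^{n+1}$ minus its mean) that you handle through finite-dimensional norm equivalences.
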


The basic idea of the proof is as follows. We first use energy stability \eqref{eq33} to obtain the boundedness of the sequence $(u_h^n)_{n\geq 0}$. Next, we reformulate the scheme \eqref{eq-2} equivalently in matrix form in $\mathbb{R}^M$ and demonstrate that this reformulated scheme is a gradient flow of the energy $\tilde{\Phi}$ defined by \eqref{eq32} in an appropriate $M-1$-dimensional subspace of $\mathbb{R}^M$ by using the properties of $A_h$ and $A_J$. Finally, by applying {\L}ojasiewicz inequality \eqref{eq8} and Theorem \ref{the1}, we can derive that the sequence $(u_h^n)_{n\geq 0}$ converges to an equilibrium as $n\to \infty$.
\begin{proof}
\textbf{Step 1. $(u_h^n)_{n\geq 0}$ is bounded in $\mathcal{C}_{N\times N}^{per}$.}
Inequality \eqref{eq33} shows that $(E_h(u_h^n))_{n\geq 0}$ is nonincreasing, and
\[
E_h(u_h^n)\geq \frac{1}{2}\|u_h^n\|_2^2-\frac{3}{4}|\Omega|,
\]
i.e., $(E_h(u_h^n))_{n\geq 0}$ is bounded and so the sequence $(u_h^n)_{n\geq 0}$ is bounded in $\mathcal{C}_{N\times N}^{per}$. Therefore, there exists a subsequence $(u_h^{n_k})_{n_k\geq 0}$ such that $\lim_{n_k\to \infty}u_h^{n_k} =u_h^\infty$. Next, we will prove that the whole sequence $(u_h^n)_{n\geq 0}$ converges to $u_h^\infty$.

\textbf{Step 2. Construction of the matrix form with gradient flow structure for the scheme \eqref{eq-2}.}
By ordering the cell-centered nodes lexicographically, the first-order backward Euler scheme \eqref{eq-2} can be rephrased as: given $U^0\in \mathbb{R}^M(M=N^2)$, for $n=0,1,\cdots$, find vector-valued functions $(U^{n+1},W^{n+1})\in \mathbb{R}^M\times \mathbb{R}^M$ such that
\begin{equation}\label{eq17}
\begin{cases}
\frac{U^{n+1}-U^n}{\tau}= -A_hW^{n+1},\\
W^{n+1}=\nabla F_M(U^{n+1})+\varepsilon^2 A_J U^{n+1},
\end{cases}
\end{equation}
where $U^n=(u_i^n)_{1\leq i\leq M}$ and $W^n=(\omega_i^n)_{1\leq i\leq M}$. $A_h$ and $A_J$ are the symmetric positive semi-definite matrices defined in \eqref{eq29} and \eqref{eq28}, respectively. The nonlinear term $F_M: \mathbb{R}^M\to \mathbb{R}$ is defined by
$
F_M(v_1,\cdots,v_M)=\frac{1}{4}\sum_{i=1}^{M}((v_i)^2-1)^2.
$
It is convenient to introduce the discrete energy
\begin{equation}\label{eq20}
\Phi(U)=\frac{\varepsilon^2}{2}U^TA_JU+F_M(U).
\end{equation}

By multiplying both sides of the first equation in \eqref{eq17} by $E_1^T$ and using induction, we can deduce that $E_1^T U^n=E_1^T U^0$ for all $n\geq 0$. Since $A_h$ and $A_J$ are both symmetric positive semi-definite matrices, with $0$ being an eigenvalue of both associated with the eigenvector $E_1$, we choose an orthonormal basis of $\mathbb{R}^M$ starting with $E_1/\|E_1\|$, following the process described in Theorem 3.5 of \cite{Pierre1}. This allows us to transform matrices $A_h$ and $A_J$ into:
\begin{equation}\label{eq31}
A_h=\left(\begin{array}{ccc}0 & \cdots & 0 \\ \vdots & \tilde{A}_h & \\ 0 & & \end{array}\right),\quad
A_J=\left(\begin{array}{ccc}0 & \cdots & 0 \\ \vdots & \tilde{A}_J & \\ 0 & & \end{array}\right),
\end{equation}
where $\tilde{A}_h$ and $\tilde{A}_J$ are $(M-1)\times (M-1)$ symmetric matrices, and $\tilde{A}_h$ is positive definite. Hence, the inverse of $\tilde{A}_h$ exists and is denoted by $\tilde{A}_h^{-1}$.

Using the decomposition of \eqref{eq31}, we can deduce from \eqref{eq17} that $u_1^{n+1}=u_1^n=u_1^0$ for all $n\geq 0$. Therefore, in the subsequent proof, we only need to consider the remaining $M-1$ components of the numerical solution $U^{n+1}$. We write
$
\tilde{U}=(u_2,\cdots,u_M)^T,\;\tilde{W}=(\omega_2,\cdots,\omega_M)^T,
$
and
$
\tilde{F}_M(\tilde{U})=F_M(u_1^0,u_2,\cdots,u_M).
$
We denote
$
\tilde{\nabla} \tilde{F}_M(\tilde{U})=\left(\partial_{u_2}\tilde{F}_M(\tilde{U}),\cdots,\partial_{u_M}\tilde{F}_M(\tilde{U})\right)^T.
$
Then \eqref{eq17} becomes
\begin{equation}\label{eq19}
\begin{cases}
\frac{\tilde{U}^{n+1}-\tilde{U}^n}{\tau}= -\tilde{A}_h\tilde{W}^{n+1},\\
\tilde{W}^{n+1}=\tilde{\nabla} \tilde{F}_M(\tilde{U}^{n+1})+\varepsilon^2 \tilde{A_J} \tilde{U}^{n+1},
\end{cases}
\end{equation}
where the nonlinear term $\tilde{F}_M: \mathbb{R}^{M-1}\to \mathbb{R}$ is a nonnegative real analytic function (since it is a polynomial). The corresponding discrete energy is denoted
\begin{equation}\label{eq32}
    \tilde{\Phi}(\tilde{U})=\frac{1}{2}\tilde{U}^T\tilde{A_J} \tilde{U}+\tilde{F}_M(\tilde{U})=\Phi(u_1^0,\tilde{U}), \quad \tilde{U}\in \mathbb{R}^{M-1}.
\end{equation}

\textbf{Step 3. The energy $\tilde{\Phi}$ satisfies the {\L}ojasiewicz inequality and the sequence $(\tilde{U}^n)_{n\geq0}$ satisfies the assumptions in Theorem \ref{the1} in $\mathbb{R}^{M-1}$.}
When $\tau$ satisfies condition \eqref{tau1}, according to \eqref{eq33}, we have
\begin{align}\label{eq-13}
\tilde{\Phi}(\tilde{U}^n)-\tilde{\Phi}(\tilde{U}^{n+1})&\geq \frac{1}{2\tau}(\tilde{U}^{n+1}-\tilde{U}^n)^T\tilde{A}_h^{-1}(\tilde{U}^{n+1}-\tilde{U}^n)\nonumber\\
& \geq \frac{1}{2\tau \lambda_M}\|\tilde{U}^{n+1}-\tilde{U}^n\|^2, \quad n\geq0,
\end{align}
where $\lambda_{M}>0$ is the largest eigenvalue of $A_h$. Setting $G(U)=\tilde{\Phi}(\tilde{U})$, we find that assumption $\textbf{H1}$ is satisfied. Since $\tilde{\Phi}$ is a real analytic function in $\mathbb{R}^{M-1}$, it follows from Theorem \ref{lem3} that $\tilde{\Phi}$ satisfies the {\L}ojasiewicz inequality at every point in $\mathbb{R}^{M-1}$.

Using the discrete energy \eqref{eq32}, \eqref{eq19} can be equivalently written as
\begin{equation}\label{eq24}
    \tilde{A}_h^{-1}\left(\frac{\tilde{U}^{n+1}-\tilde{U}^n}{\tau}\right)
    =-\left(\tilde{\nabla} \tilde{F}_M(U^{n+1})+\varepsilon^2 \tilde{A}_J \tilde{U}^{n+1}\right)
    =-\tilde{\nabla} \tilde{\Phi}(\tilde{U}^{n+1}),\quad n\geq 0,
\end{equation}
which is the gradient flow of $\tilde{\Phi}$. We then obtain
\begin{equation}
\|\tilde{\nabla} \tilde{\Phi}(\tilde{U}^{n+1})\|=\left\|-\frac{1}{\tau} \tilde{A}_h^{-1}(\tilde{U}^{n+1}-\tilde{U}^n)\right\|
\leq c_3\|\tilde{U}^{n+1}-\tilde{U}^n\|,
\end{equation}
where $c_3=C(\|\tilde{A}_h^{-1}\|,\tau)$ is independent of $n$. This shows that assumption $\textbf{H2}$ holds. Let us denote $U^\infty:=(u_1^0,u_2^\infty,\cdots,u_M^\infty)^T$ and $U^n=(u_1^0,u_2^n,\cdots,u_M^n)^T$. Applying Theorem \ref{the1}, we obtain that $U^n$ converges to $U^{\infty}$ as $n\to +\infty$ and $E_1^TU^{\infty}=E_1^TU^0$. Finally, taking the limit in \eqref{eq-2}, we complete the proof.
\end{proof}

\subsection{The first-order convex splitting scheme}
In this subsection, we prove that the sequence generated by the first-order convex splitting scheme converges to an equilibrium as time tends to infinity.
We write the energy $E(u)$ as the difference of two convex functions, i.e., $E=E_c-E_e$, where
\begin{align}
    &E_c(u)=\frac{1}{4}\|u\|_{L^4}^4+\frac{1}{4}|\Omega|+\frac{2\varepsilon^2(\textrm{J}\ast 1)}{2}\|u\|_{L^2}^2,\label{eq35}\\
    &E_e(u)=\frac{1}{2}\|u\|_{L^2}^2+\frac{\varepsilon^2(\textrm{J}\ast 1)}{2}\|u\|_{L^2}^2+\frac{1}{2}(u,\textrm{J}\ast u)_{L^2}. \label{eq36}
\end{align}
We adopt the above decomposition, which allows us to separate the nonlinear term (implicitly treated) and nonlocal term (explicitly treated) without sacrificing numerical stability \cite{Guan1}.

The corresponding convex splitting of the discrete energy is given by:
$
 E_h(u_h)=E_{c,h}(u_h)-E_{e,h}(u_h),
 $
where
\begin{align}
    &E_{c,h}(u_h)=\frac{1}{4}\|u_h\|_4^4+\frac{1}{4}|\Omega|+\frac{2\varepsilon^2[J\circledast \mathbf{1}]}{2}\|u_h\|_2^2,\\
    &E_{e,h}(u_h)=\frac{1}{2}\|u_h\|_2^2+\frac{\varepsilon^2[J\circledast \mathbf{1}]}{2}\|u_h\|_2^2+\frac{1}{2}(u_h\|J\circledast u_h).
\end{align}
Then, the first-order convex splitting scheme for the N-CH equation can be formulated as \cite{Guan1}: given $u_h^0\in \mathcal{C}_{N\times N}^{per}$ and for $n\geq 0$, find $(u_h^{n+1}, \omega_h^{n+1}) \in \mathcal{C}_{N\times N}^{per}\times \mathcal{C}_{N\times N}^{per}$ such that
\begin{eqnarray}\label{eq37}
\left\{
\begin{array}{lllll}
\frac{u_h^{n+1}-u_h^n}{\tau}= \Delta_h \omega_h^{n+1}, \quad \\
\omega_h^{n+1} =(u_h^{n+1})^3+2\varepsilon^2 [J\circledast \mathbf{1}]u_h^{n+1}-u_h^n-\varepsilon^2 [J\circledast \mathbf{1}]u_h^n-\varepsilon^2[J\circledast u_h^n].\\
\end{array}
\right.
\end{eqnarray}
This is a nonlinear scheme with first order accuracy in time, which is unconditionally uniquely solvable, mass conservative and energy stable.
\begin{thm}\cite{Guan1}\label{the6}
The scheme \eqref{eq37} is unconditionally uniquely solvable and the mass is conserved, i.e., $(u_h^{n+1}-u_h^n\| \textbf{1})=0$ for all $n\geq 0$. Moreover, for any $\tau >0$,
\begin{equation}\label{eq40}
        E_h(u_h^{n+1})+\tau \|\nabla_h \omega_h^{n+1}\|_2^2+\varepsilon^2[J\circledast \mathbf{1}]\|u_h^{n+1}-u_h^n\|_2^2\leq E_h(u_h^n).
\end{equation}
\end{thm}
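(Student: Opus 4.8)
The plan is to follow the convex–splitting argument (this is the route of \cite{Guan1}). First I would obtain mass conservation: take $h^2(\cdot\|\mathbf 1)$ of the first equation in \eqref{eq37} and use $\Delta_h\mathbf 1=0$ together with the discrete summation–by–parts formulae, so that $h^2(\Delta_h\omega_h^{n+1}\|\mathbf 1)=h^2(\omega_h^{n+1}\|\Delta_h\mathbf 1)=0$, hence $(u_h^{n+1}-u_h^n\|\mathbf 1)=0$. This places $u_h^{n+1}$ in the affine space $\mathcal M_h^{\ast}:=\{z\in\mathcal C_{N\times N}^{per}:(z\|\mathbf 1)=(u_h^n\|\mathbf 1)\}$, and in particular makes $u_h^{n+1}-u_h^n$ zero–mean, so $(-\Delta_h)^{-1}(u_h^{n+1}-u_h^n)$ and $\|u_h^{n+1}-u_h^n\|_{-1}$ are legitimate.

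For unique solvability I would write $\ell_h^n:=u_h^n+\varepsilon^2[J\circledast\mathbf 1]u_h^n+\varepsilon^2[J\circledast u_h^n]$ for the explicitly treated part of the second equation of \eqref{eq37} (which is $E_{e,h}'(u_h^n)$ for the splitting $E_h=E_{c,h}-E_{e,h}$), invert $\Delta_h$ on $\mathcal M_h^0$ in the first equation, and recognize \eqref{eq37} as the Euler–Lagrange equation on $\mathcal M_h^{\ast}$ of
\[
\mathcal J[z]:=\frac{1}{2\tau}\|z-u_h^n\|_{-1}^2+E_{c,h}(z)-h^2\big(z\,\big\|\,\ell_h^n\big).
\]
I would then verify that $\mathcal J$ is strictly convex on $\mathcal M_h^{\ast}$ — the term $\tfrac14\|z\|_4^4$ and the quadratic $\varepsilon^2[J\circledast\mathbf 1]\|z\|_2^2$ inside $E_{c,h}$ are convex, $\|z-u_h^n\|_{-1}^2$ is strictly convex along directions in $\mathcal M_h^0$ because $(-\Delta_h)^{-1}$ is positive definite there, and the last term is linear — and coercive on $\mathcal M_h^{\ast}$ since, after Cauchy–Schwarz, the quartic term dominates the linear one. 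Hence $\mathcal J$ has a unique minimizer $u_h^{n+1}$ for \emph{every} $\tau>0$; setting $\omega_h^{n+1}:=E_{c,h}'(u_h^{n+1})-\ell_h^n$ recovers the second equation, giving unconditional unique solvability.

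For the energy estimate \eqref{eq40}, write $\delta u:=u_h^{n+1}-u_h^n$ and split $E_h=E_{c,h}-E_{e,h}$. For the implicitly treated energy, convexity of $z\mapsto\tfrac14\|z\|_4^4$ gives $\tfrac14(\|u_h^{n+1}\|_4^4-\|u_h^n\|_4^4)\le h^2\big((u_h^{n+1})^3\|\delta u\big)$, while the exact quadratic identity for $q(z)=\varepsilon^2[J\circledast\mathbf 1]\|z\|_2^2$ gives $q(u_h^{n+1})-q(u_h^n)=2\varepsilon^2[J\circledast\mathbf 1]h^2(u_h^{n+1}\|\delta u)-\varepsilon^2[J\circledast\mathbf 1]\|\delta u\|_2^2$; adding,
\[
E_{c,h}(u_h^{n+1})-E_{c,h}(u_h^n)\le h^2\big(E_{c,h}'(u_h^{n+1})\,\big\|\,\delta u\big)-\varepsilon^2[J\circledast\mathbf 1]\|\delta u\|_2^2 .
\]
For the explicitly treated energy I would use convexity of $E_{e,h}$, i.e. $E_{e,h}(u_h^{n+1})-E_{e,h}(u_h^n)\ge h^2(\ell_h^n\|\delta u)$, which needs the nonlocal quadratic form to be nonnegative, $\varepsilon^2[J\circledast\mathbf 1]\|z\|_2^2+\varepsilon^2h^2(z\|J\circledast z)\ge0$ — precisely Lemma \ref{lem1}, \eqref{eq-6}, with $\alpha=1$. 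Subtracting, using $E_{c,h}'(u_h^{n+1})-\ell_h^n=\omega_h^{n+1}$, the first equation of \eqref{eq37}, and summation by parts,
\[
E_h(u_h^{n+1})-E_h(u_h^n)\le h^2(\omega_h^{n+1}\|\delta u)-\varepsilon^2[J\circledast\mathbf 1]\|\delta u\|_2^2=-\tau\|\nabla_h\omega_h^{n+1}\|_2^2-\varepsilon^2[J\circledast\mathbf 1]\|\delta u\|_2^2,
\]
which rearranges to \eqref{eq40}.

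The calculations are routine; the two places that will need care are (i) setting up the minimization on the mass–constrained affine space $\mathcal M_h^{\ast}$ so that $(-\Delta_h)^{-1}$ and $\|\cdot\|_{-1}$ are well defined, and (ii) checking that the explicitly treated energy $E_{e,h}$ is genuinely convex despite the ``$+$'' sign in front of $h^2(u\|J\circledast u)$ — this is exactly where the nonnegativity of the kernel $J$, via Lemma \ref{lem1}, is indispensable, and it is what upgrades plain stability $E_h(u_h^{n+1})\le E_h(u_h^n)$ to the sharper \eqref{eq40} with the extra dissipation $\varepsilon^2[J\circledast\mathbf 1]\|u_h^{n+1}-u_h^n\|_2^2$.
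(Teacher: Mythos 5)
Your proof is correct and takes essentially the same route the paper relies on: Theorem \ref{the6} is quoted from \cite{Guan1} without a reproduced proof, and the cited argument is precisely this convex-splitting one (variational unique solvability of the strictly convex, coercive functional on the mass-constrained space, then the convexity inequalities for $E_{c,h}$ and $E_{e,h}$ combined with Lemma \ref{lem1} to get the dissipation terms in \eqref{eq40}). The solvability step also mirrors the paper's own minimization argument for the backward Euler scheme in Theorem \ref{the4}, so no further comparison is needed.
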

\begin{thm}\label{the5}
As $n\to +\infty$, the sequence $(u_h^n,\omega_h^n)_{n\geq 0}$ which generated by the scheme \eqref{eq37} converges to a steady state $(u_h^\infty,\omega_h^\infty)$, which is a solution of steady-state equation \eqref{eq2222}.
\end{thm}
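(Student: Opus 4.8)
The plan is to follow the same three-step argument as in the proof of Theorem~\ref{the2}, making two substitutions: the energy decay used there for the backward Euler scheme is replaced by the modified stability estimate \eqref{eq40}, and the implicit gradient-flow structure of \eqref{eq-2} is replaced by the convex-splitting structure of \eqref{eq37}. First I would note that, by \eqref{eq40}, $\bigl(E_h(u_h^n)\bigr)_{n\ge0}$ is nonincreasing, hence bounded above by $E_h(u_h^0)$; combined with the coercivity bound $E_h(u_h)\ge\tfrac12\|u_h\|_2^2-\tfrac34|\Omega|$ established in the proof of Theorem~\ref{the4} (Lemma~\ref{lem1} with $\alpha=1$ shows the nonlocal quadratic form is nonnegative and therefore drops out), this gives that $(u_h^n)_{n\ge0}$ is bounded in $\mathcal{C}_{N\times N}^{per}$, so some subsequence converges to a limit $u_h^\infty$; it then remains to promote this to convergence of the whole sequence.

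Next I would recast \eqref{eq37} in $\mathbb{R}^M$ ($M=N^2$) by lexicographic ordering of the cell-centered nodes, obtaining $\frac{U^{n+1}-U^n}{\tau}=-A_hW^{n+1}$ with $W^{n+1}=\nabla\Phi_c(U^{n+1})-\nabla\Phi_e(U^n)$, where $\Phi_c,\Phi_e:\mathbb{R}^M\to\mathbb{R}$ are the vector representatives of $E_{c,h},E_{e,h}$; thus $\Phi_c$ is convex, $\Phi_e$ is a quadratic polynomial, and $\Phi:=\Phi_c-\Phi_e$ coincides, up to an additive constant, with the discrete energy \eqref{eq20}. As in Step~2 of the proof of Theorem~\ref{the2}, the common null eigenvector $E_1$ of $A_h$ and $A_J$ forces $E_1^TU^n=E_1^TU^0$, and the orthonormal change of basis of \cite{Pierre1} used in \eqref{eq31} freezes the first component, $u_1^{n+1}=u_1^0$, reducing the scheme to the $(M-1)$-dimensional system
\[
\tilde A_h^{-1}\,\frac{\tilde U^{n+1}-\tilde U^n}{\tau}=-\bigl(\tilde\nabla\tilde\Phi_c(\tilde U^{n+1})-\tilde\nabla\tilde\Phi_e(\tilde U^n)\bigr),
\]
with $\tilde A_h$ symmetric positive definite, $\tilde\Phi_c(\tilde U)=\Phi_c(u_1^0,\tilde U)$, $\tilde\Phi_e(\tilde U)=\Phi_e(u_1^0,\tilde U)$, and $\tilde\Phi:=\tilde\Phi_c-\tilde\Phi_e$ real analytic with $\tilde\Phi(\tilde U^n)=E_h(u_h^n)$ up to an additive constant.

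Then I would set $G:=\tilde\Phi$ and check hypotheses \textbf{H1}, \textbf{H2} of Theorem~\ref{the1}. Hypothesis \textbf{H1} is immediate from \eqref{eq40}: dropping the nonnegative term $\tau\|\nabla_h\omega_h^{n+1}\|_2^2$, using $\|u_h^{n+1}-u_h^n\|_2^2\propto\|U^{n+1}-U^n\|^2=\|\tilde U^{n+1}-\tilde U^n\|^2$ (the frozen first component contributes nothing), and $\tilde\Phi(\tilde U^n)=E_h(u_h^n)+\mathrm{const}$, one obtains $\tilde\Phi(\tilde U^n)-\tilde\Phi(\tilde U^{n+1})\ge c_2\|\tilde U^{n+1}-\tilde U^n\|^2$ with $c_2>0$ a positive multiple of $\varepsilon^2[J\circledast\mathbf{1}]$. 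For \textbf{H2} I would add and subtract $\tilde\nabla\tilde\Phi_e(\tilde U^{n+1})$ in the reduced scheme:
\[
\tilde\nabla\tilde\Phi(\tilde U^{n+1})=-\tfrac1\tau\tilde A_h^{-1}(\tilde U^{n+1}-\tilde U^n)+\bigl(\tilde\nabla\tilde\Phi_e(\tilde U^n)-\tilde\nabla\tilde\Phi_e(\tilde U^{n+1})\bigr).
\]
Since $\tilde\Phi_e$ is quadratic its gradient is affine, so the last bracket equals $-\tilde M_e(\tilde U^{n+1}-\tilde U^n)$ for a fixed matrix $\tilde M_e$, whence $\|\tilde\nabla\tilde\Phi(\tilde U^{n+1})\|\le c_3\|\tilde U^{n+1}-\tilde U^n\|$ with $c_3=\|\tfrac1\tau\tilde A_h^{-1}+\tilde M_e\|$ independent of $n$. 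Being real analytic, $\tilde\Phi$ satisfies the {\L}ojasiewicz inequality at every point, so with $(\tilde U^n)$ bounded and \textbf{H1}, \textbf{H2} in hand, Theorem~\ref{the1} yields convergence of the whole sequence $(\tilde U^n)$, hence of $(U^n)$, to some $U^\infty$ with $E_1^TU^\infty=E_1^TU^0$; passing to the limit in \eqref{eq37} (the first equation forces $\Delta_h\omega_h^\infty=0$ and the second gives the stated relation) identifies $(u_h^\infty,\omega_h^\infty)$ as a solution of \eqref{eq2222}.

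The main obstacle, and the only step that genuinely differs from the proof of Theorem~\ref{the2}, is verifying \textbf{H2}: because the expansive part $E_{e,h}$ is treated explicitly, \eqref{eq37} is \emph{not} the implicit gradient flow of $\tilde\Phi$, so one cannot read $\tilde\nabla\tilde\Phi(\tilde U^{n+1})$ directly off the scheme. The resolution is the observation that $E_{e,h}$ (hence $\Phi_e$) is a quadratic form, so the defect $\tilde\nabla\tilde\Phi_e(\tilde U^n)-\tilde\nabla\tilde\Phi_e(\tilde U^{n+1})$ is exactly linear in the increment $\tilde U^{n+1}-\tilde U^n$ and is absorbed into the constant $c_3$; note that, in contrast with Theorem~\ref{the2}, no restriction on $\tau$ is needed, in agreement with the unconditional stability asserted in Theorem~\ref{the6}.
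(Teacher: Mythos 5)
Your proposal is correct and follows essentially the same route as the paper: boundedness from \eqref{eq40}, matrix reformulation and reduction to the mass-conserving $(M-1)$-dimensional subspace, verification of \textbf{H1}--\textbf{H2}, and the {\L}ojasiewicz inequality via Theorem \ref{the1}. The only cosmetic differences are that the paper obtains \textbf{H1} from the $\tau\|\nabla_h\omega_h^{n+1}\|_2^2$ term (giving $c_2=1/(\tau\lambda_M)$) rather than the $\varepsilon^2[J\circledast\mathbf{1}]\|u_h^{n+1}-u_h^n\|_2^2$ term you use, and it expresses your quadratic-defect matrix $\tilde M_e$ explicitly as $\gamma_2 I-\varepsilon^2 A_J$ after rewriting $u_h^n=u_h^{n+1}+(u_h^n-u_h^{n+1})$; both yield the same \textbf{H2} bound with no restriction on $\tau$.
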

\begin{proof}
\textbf{Step 1. $(u_h^n)_{n\geq 0}$ is bounded in $\mathcal{C}_{N\times N}^{per}$.}
According to \eqref{eq40}, we can obtain that $(E_h(u_h^n))_{n\geq 0}$ is nonincreasing and bounded, so $(u_h^n)_{n\geq 0}$ is bounded in $\mathcal{C}_{N\times N}^{per}$. Hence, there exists a subsequence $(u_h^{n_k})_{n_k\geq 0}$, such that $\lim_{n_k\to \infty}u_h^{n_k} =u_h^\infty$. Next, we prove that the whole sequence $(u_h^n)_{n\geq 0}$ converges to $u_h^\infty$.

\textbf{Step 2. Construction of the matrix form for the scheme \eqref{eq37}.}
By writing the first term $u_h^n$ in the second equation of \eqref{eq37} as $u_h^n=u_h^{n+1}+(u_h^n-u_h^{n+1})$, we obtain
\begin{align*}
\omega_h^{n+1} &=(u_h^{n+1})^3+2\varepsilon^2 [J\circledast \mathbf{1}]u_h^{n+1}-u_h^n-\varepsilon^2 [J\circledast \mathbf{1}]u_h^n-\varepsilon^2[J\circledast u_h^n]\\
&=(u_h^{n+1})^3-u_h^{n+1}+(2\varepsilon^2 [J\circledast \mathbf{1}]+1)(u_h^{n+1}-u_h^n)+\varepsilon^2([J\circledast \mathbf{1}]u_h^n-[J\circledast u_h^n]).
\end{align*}
Then, we can rewrite the scheme  \eqref{eq37} as follows: let $U^0\in \mathbb{R}^M$ and for $n=0,1,\cdots$, find $(U^{n+1},W^{n+1})\in \mathbb{R}^M\times \mathbb{R}^M$ solve
\begin{equation}\label{eq38}
\begin{cases}
\frac{U^{n+1}-U^n}{\tau}= -A_hW^{n+1},\\
W^{n+1}=\nabla F_M(U^{n+1})+\varepsilon^2 A_J U^n+\gamma_2(U^{n+1}-U^n),
\end{cases}
\end{equation}
where $\gamma_2=2\varepsilon^2 [J\circledast \mathbf{1}]+1$. The discrete energy is still $\Phi$ defined by \eqref{eq20}.

Similar to the argument in \textbf{Step 2} of Theorem \ref{the2}, we have $E_1^T U^n=E_1^T U^0$. Furthermore, equation \eqref{eq38} can be written as
\begin{equation}\label{eq39}
\begin{cases}
\frac{\tilde{U}^{n+1}-\tilde{U}^n}{\tau}= -\tilde{A}_h\tilde{W}^{n+1},\\
\tilde{W}^{n+1}=\tilde{\nabla} \tilde{F}_M(\tilde{U}^{n+1})+\varepsilon^2 \tilde{A}_J \tilde{U}^n+\gamma_2(\tilde{U}^{n+1}-\tilde{U}^n).
\end{cases}
\end{equation}
where the nonlinear term $\tilde{F}_M: \mathbb{R}^{M-1}\to \mathbb{R}$ is a real analytic function of class $C^2$. For ease of analysis, in the remainder of the proof, we omit the tilde symbol and replace $M-1$ with $M$. We thus continue to study \eqref{eq38}, under the assumption that $A_h$ is an $M\times M$ symmetric positive definite matrix with inverse $A_h^{-1}$.

\textbf{Step 3. The energy $\tilde{\Phi}$ satisfies the {\L}ojasiewicz inequality and the sequence $(U^n)_{n\geq0}$ satisfies the assumptions in Theorem \ref{the1} in $\mathbb{R}^{M-1}$.}
By \eqref{eq40}, we have
\begin{align*}
    \Phi(U^n)-\Phi(U^{n+1})&\geq \tau (W^{n+1})^TA_h W^{n+1}\\
    &=\frac{1}{\tau}(U^{n+1}-U^n)^T A_h^{-1}(U^{n+1}-U^n)\\
    &\geq \frac{1}{\tau \lambda_M}\|U^{n+1}-U^n\|^2, \quad \forall n\geq0,
\end{align*}
where $\lambda_{M}>0$ is the largest eigenvalue of $A_h$. Setting $G(U)=\Phi(U)$, we find that assumption $\textbf{H1}$ is satisfied. By eliminating $W^{n+1}$ from \eqref{eq38}, we obtain
\begin{align*}
    -\frac{1}{\tau}A_h^{-1}(U^{n+1}-U^n)&=\nabla F_M(U^{n+1})+\varepsilon^2 A_J U^n+\gamma_2(U^{n+1}-U^n)\\
    &=\nabla  \Phi(U^{n+1})+(\gamma_2 I-\varepsilon^2 A_J)(U^{n+1}-U^n),
\end{align*}
Thus, we find
\[
\|\nabla  \Phi(U^{n+1})\|=\left\|\left(\frac{1}{\tau}A_h^{-1}+(\gamma_2 I-\varepsilon^2 A_J)\right)(U^{n+1}-U^n)\right\|\leq C(\tau,\varepsilon,\|A_J\|,\|A_h^{-1}\|,\gamma_2)\|U^{n+1}-U^n\|,\quad \forall n\geq 0.
\]
From this, we find that $\textbf{H2}$ holds. We note that $\Phi$ is a real analytic function in $\mathbb{R}^M$, according to Theorem \ref{lem3}, $\Phi$ satisfies the {\L}ojasiewicz inequality at each point in $\mathbb{R}^M$.
Applying Theorem \ref{the1}, we obtain that the sequence $(U^n)_n$ converges to $U^\infty=(u_i^\infty)_{\leq i\leq M}$. Passing to the limit with respect to $n$ in \eqref{eq37} concludes the proof.
\end{proof}

\subsection{The first-order stabilized linear semi-implicit (SSI1) scheme}
In order to ensure that the linear scheme is energy stable, we need to make the following assumption on the nonlinear potential $F$:
\begin{Assump}\label{assum1}
    There exists a constant $\beta>0$, such that
   $
        \max_{s\in \mathbb{R}}|F''(s)|\leq \beta.
    $
\end{Assump}

Therefor, for a given $K>1$, we can replace $F(u)=\frac{1}{4}(u^2-1)^2$ by the modified potential $F_K$ \cite{Shen1} ,
\begin{equation}\label{eq411}
F_K(u)=
	\begin{cases}
 		\frac{3K^2-1}{2}u^2-2K^3u+\frac{1}{4}(3K^4+1), &u>K, \\
		\frac{1}{4}(u^2-1)^2, &u\in[-K,K], \\
        \frac{3K^2-1}{2}u^2+2K^3u+\frac{1}{4}(3K^4+1), &u<-K,
		\end{cases}
\end{equation}
Moreover,
\[
F_K'(u)=
	\begin{cases}
 		(3K^2-1)u-2K^3, &u>K, \\
		u^3-u, &u\in[-K,K], \\
        (3K^2-1)u+2K^3, &u<-K.
		\end{cases}
\]
We note that $F_K\in C^2(\mathbb{R},\mathbb{R})$ and that $F_K''(s)$ satisfies Assumption \ref{assum1}.

The SSI1 scheme of the N-CH equation is established as follows. Let $u_h^0\in \mathcal{C}_{N\times N}^{per}$, for all $n\geq 0$, find $(u_h^{n+1}, \omega_h^{n+1}) \in \mathcal{C}_{N\times N}^{per}\times \mathcal{C}_{N\times N}^{per}$ such that
\begin{eqnarray}\label{eq42}
\left\{
\begin{array}{lllll}
\frac{u_h^{n+1}-u_h^n}{\tau}= \Delta_h \omega_h^{n+1}, \hspace{0.7cm}\ \\
\omega_h^{n+1} =F_K'(u_h^n)+S(u_h^{n+1}-u_h^n)+\varepsilon^2 [J\circledast \mathbf{1}]u_h^{n+1}-\varepsilon^2[J\circledast u_h^{n+1}] ,\\
\end{array}
\right.
\end{eqnarray}
where $S>0$ is a stabilization constant. This is a linear scheme with first order accuracy and has a unique solution. The mass is conserved, which can be verified by taking the discrete $\ell^2$ inner product of the first equation in \eqref{eq42} with $\mathbf{1}$.

Define a discrete version of the energy as
\begin{equation}\label{eq48}
    E_{K,h}(u_h)=h^2(F_K(u_h)\| \mathbf{1})+\frac{\varepsilon^2 [J\circledast \mathbf{1}]}{2}\| u_h\|_2^2-\frac{\varepsilon^2}{2}h^2(u_h\| [J\circledast u_h]), \quad \forall u_h\in \mathcal{C}_{N\times N}^{per}.
\end{equation}
The energy stability result for the SSI1 scheme \eqref{eq42} is as follows:
\begin{thm}\label{the7}
For $S\geq \frac{\beta}{2}$, the SSI1 scheme \eqref{eq42} is unconditionally energy stable, i.e.,
\begin{equation}\label{eq43}
        E_{K,h}(u_h^{n+1})+\tau\|\nabla_h\omega_h^{n+1}\|_2^2\leq E_{K,h}(u_h^n),\quad n\geq 0,
    \end{equation}
\end{thm}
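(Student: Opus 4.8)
\textbf{Proof plan for Theorem \ref{the7}.}
The plan is to mimic the standard energy-stability argument for stabilized semi-implicit schemes, adapted to the nonlocal discrete setting. First I would take the discrete inner product of the first equation in \eqref{eq42} with $h^2\tau\,\omega_h^{n+1}$. Using the summation-by-parts formula from Proposition~1, the left-hand side produces $h^2(u_h^{n+1}-u_h^n\|\omega_h^{n+1}) = -\tau\|\nabla_h\omega_h^{n+1}\|_2^2$, so it remains to show that $h^2(u_h^{n+1}-u_h^n\|\omega_h^{n+1})\geq E_{K,h}(u_h^{n+1})-E_{K,h}(u_h^n)$, which combined with the previous identity yields exactly \eqref{eq43}.

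Next I would substitute the expression for $\omega_h^{n+1}$ from the second equation of \eqref{eq42} and estimate each contribution of $h^2(u_h^{n+1}-u_h^n\|\omega_h^{n+1})$ separately. For the quadratic nonlocal part $\varepsilon^2[J\circledast\mathbf{1}]\|u_h\|_2^2/2 - \varepsilon^2 h^2(u_h\|[J\circledast u_h])/2$, I would use the elementary identity $(a-b)a = \tfrac12(a^2-b^2)+\tfrac12(a-b)^2$ together with the symmetry relation \eqref{eq-5} of Lemma~\ref{lem1} to obtain the exact energy difference of those two terms plus a nonnegative remainder of the form $\tfrac{\varepsilon^2[J\circledast\mathbf{1}]}{2}\|u_h^{n+1}-u_h^n\|_2^2 - \tfrac{\varepsilon^2}{2}h^2(u_h^{n+1}-u_h^n\|[J\circledast(u_h^{n+1}-u_h^n)])$; since $J\geq 0$ (consistency with the continuous kernel) and $[J\circledast\mathbf{1}]$ is the row sum, inequality \eqref{eq-6} with $\alpha=1$ shows this remainder is nonnegative. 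For the stabilization term, $h^2(u_h^{n+1}-u_h^n\|S(u_h^{n+1}-u_h^n)) = S\|u_h^{n+1}-u_h^n\|_2^2\geq 0$. The only term requiring the hypothesis $S\geq\beta/2$ is $h^2(u_h^{n+1}-u_h^n\|F_K'(u_h^n))$: here I would use a Taylor expansion with integral remainder, $F_K(u_h^{n+1}) = F_K(u_h^n)+F_K'(u_h^n)(u_h^{n+1}-u_h^n)+\tfrac12 F_K''(\xi)(u_h^{n+1}-u_h^n)^2$ pointwise, so that $h^2(u_h^{n+1}-u_h^n\|F_K'(u_h^n))\geq h^2(F_K(u_h^{n+1})\|\mathbf{1}) - h^2(F_K(u_h^n)\|\mathbf{1}) - \tfrac{\beta}{2}\|u_h^{n+1}-u_h^n\|_2^2$ by Assumption~\ref{assum1}.

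Collecting the four estimates, the energy differences of the potential part and the nonlocal quadratic part reassemble into $E_{K,h}(u_h^{n+1})-E_{K,h}(u_h^n)$, while the leftover quadratic-in-increment terms sum to $\bigl(S-\tfrac{\beta}{2}\bigr)\|u_h^{n+1}-u_h^n\|_2^2$ plus the nonnegative nonlocal remainder, which is $\geq 0$ precisely when $S\geq\beta/2$. This gives \eqref{eq43}.

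The main obstacle I anticipate is bookkeeping the nonlocal quadratic term correctly: one must be careful that the cross term $h^2(u_h^n\|[J\circledast u_h^{n+1}])$ produced by expanding $h^2(u_h^{n+1}-u_h^n\|[J\circledast u_h^{n+1}])$ can be symmetrized using \eqref{eq-5}, and that the sign of the resulting increment remainder $-\tfrac{\varepsilon^2}{2}h^2(u_h^{n+1}-u_h^n\|[J\circledast(u_h^{n+1}-u_h^n)])$ is controlled by the positive term $\tfrac{\varepsilon^2[J\circledast\mathbf{1}]}{2}\|u_h^{n+1}-u_h^n\|_2^2$ via \eqref{eq-6}; every other piece is routine. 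Note that, unlike Theorem~\ref{the4}, no time-step restriction is needed because the nonlocal term is treated fully implicitly and its quadratic form is positive semidefinite.
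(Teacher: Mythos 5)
Your proposal is correct and follows essentially the same route as the paper's proof: test with $\omega_h^{n+1}$ (equivalently, pair the two equations against $\omega_h^{n+1}$ and $u_h^{n+1}-u_h^n$), handle $F_K'$ by Taylor expansion with $|F_K''|\leq\beta$, and treat the nonlocal quadratic part exactly as in the paper via the symmetry \eqref{eq-5} and the bound \eqref{eq-6} with $\alpha=1$ to absorb the increment remainder, yielding the extra $\left(S-\tfrac{\beta}{2}\right)\|u_h^{n+1}-u_h^n\|_2^2$ term. No substantive differences to report.
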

\begin{proof}
Taking the discrete $\ell^2$ inner product of the first and second equation in the SSI1 scheme \eqref{eq42} with $\omega_h^{n+1}$ and $u_h^{n+1}-u_h^n$, respectively, leads to
\begin{align}\label{eq45}
-\tau\|\nabla_h\omega_h^{n+1}\|_2^2&=h^2(F_K'(u_h^n)\| u_h^{n+1}-u_h^n)+S\|u_h^{n+1}-u_h^n\|_2^2\nonumber\\
&+ \varepsilon^2 h^2\left([J\circledast \mathbf{1}]u_h^{n+1}\| u_h^{n+1}-u_h^n\right)-\varepsilon^2 h^2\left([J\circledast u_h^{n+1}]\| u_h^{n+1}-u_h^n\right).
\end{align}
For the nonlinear term on the right-hand side of \eqref{eq45}, we use a Taylor expansion
\begin{equation}\label{eq46}
    h^2(F_K(u_h^{n+1})\|\mathbf{1})-h^2(F_K(u_h^n)\|\mathbf{1})=h^2\left(F_K'(u_h^n)\| u_h^{n+1}-u_h^n\right)+\frac{h^2}{2}\left(F''_K(\xi^n)(u_h^{n+1}-u_h^n)\|u_h^{n+1}-u_h^n\right),
\end{equation}
with $\xi^n$ between $u_h^{n+1}$ and $u_h^n$. For the nonlocal term on the right-hand side of \eqref{eq45}, by Lemma \ref{lem1}, we have
\begin{align}\label{eq47}
&h^2\left(([J\circledast \mathbf{1}]u_h^{n+1}-[J\circledast u_h^{n+1}])\| u_h^{n+1}-u_h^n\right)\nonumber\\
&\overset{\eqref{eq-5}}{=}\frac{[J\circledast \mathbf{1}]}{2}\|u_h^{n+1}\|_2^2-\frac{[J\circledast \mathbf{1}]}{2}\|u_h^n\|_2^2+\frac{[J\circledast \mathbf{1}]}{2}\|u_h^{n+1}-u_h^n\|_2^2\nonumber\\
&\quad-\frac{h^2}{2}([J\circledast u_h^{n+1}]\| u_h^{n+1})+\frac{h^2}{2}([J\circledast u_h^n]\| u_h^n)-\frac{h^2}{2}([J\circledast (u_h^{n+1}-u_h^n)]\| u_h^{n+1}-u_h^n)\nonumber\\
&\overset{\eqref{eq-6}}{\geq} \frac{[J\circledast \mathbf{1}]}{2}\|u_h^{n+1}\|_2^2-\frac{[J\circledast \mathbf{1}]}{2}\|u_h^n\|_2^2 -\frac{h^2}{2}([J\circledast u_h^{n+1}]\| u_h^{n+1})+\frac{h^2}{2}([J\circledast u_h^n]\| u_h^n).
\end{align}
A combination of \eqref{eq45}-\eqref{eq47} and Assumption \ref{assum1}, yield
\begin{align*}
&E_{K,h}(u_h^{n+1})+\tau\|\nabla_h\omega_h^{n+1}\|_2^2+S\|u_h^{n+1}-u_h^n\|_2^2\\
&\leq E_{K,h}(u_h^n)+\frac{h^2}{2}(F''_K(\xi^n)(u_h^{n+1}-u_h^n)\|u_h^{n+1}-u_h^n)\\
&\leq E_{K,h}(u_h^n)+\frac{\beta}{2}\|u_h^{n+1}-u_h^n\|_2^2.
\end{align*}
then,
\begin{equation}
    E_{K,h}(u_h^{n+1})+\tau\|\nabla_h\omega_h^{n+1}\|_2^2+\left(S-\frac{\beta}{2}\right)\|u_h^{n+1}-u_h^n\|_2^2\leq E_{K,h}(u_h^n).
\end{equation}
 So, if $S\geq \frac{\beta}{2}$, we obtain the result \eqref{eq43}.
\end{proof}

We now prove that the whole sequence generated by the SSI1 scheme \eqref{eq42} converges to an equilibrium. Firstly, the  SSI1 scheme \eqref{eq42} can be written in matrix form: given $U^0\in \mathbb{R}^M$, for $n=0,1,\cdots$, find $(U^{n+1},W^{n+1})\in \mathbb{R}^M\times \mathbb{R}^M$ solve
\begin{equation}\label{eq49}
\begin{cases}
\frac{U^{n+1}-U^n}{\tau}= -A_hW^{n+1},\\
W^{n+1}=\nabla F_{K,M}(U^n)+\varepsilon^2 A_J U^{n+1}+S(U^{n+1}-U^n),
\end{cases}
\end{equation}
where $F_{K,M}:\mathbb{R}^M\to \mathbb{R}$ is defined by
$
F_{K,M}(v_1,\cdots,v_M)=\sum_{i=1}^{M}F_K(v_i).
$
The corresponding discrete energy is defined by
\begin{equation}\label{eq50}
\Phi_K(U)=\frac{\varepsilon^2}{2}U^TA_JU+F_{K,M}(U).
\end{equation}

For the linear SSI1 scheme, the nonlinearity is no longer analytic but it is semi-algebraic \cite{ABF,BR}. Kurdyka proved that the {\L}ojasiewicz inequality also holds for semi-algebraic functions \cite{Kurdyka1}. The main results is as follows:

\begin{thm}\cite{Kurdyka1}\label{the8}
If $G\in C^1(\mathbb{R}^M,\mathbb{R})$ is semi-algebraic, then $G$ satisfies {\L}ojasiewicz inequality at each point in $\mathbb{R}^M$.
\end{thm}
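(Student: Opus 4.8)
The plan is to prove the pointwise gradient inequality \eqref{eq8} directly, by running {\L}ojasiewicz's classical argument in the form adapted to semi-algebraic (more generally, o-minimal) functions. Fix $U^{\ast}\in\mathbb{R}^M$; after a translation we may assume $U^{\ast}=0$ and $G(0)=0$. If $\nabla G(0)\neq0$, continuity of $\nabla G$ gives $\sigma>0$ and $\delta>0$ with $\|\nabla G(V)\|\geq\delta$ and $|G(V)|\leq1$ for $\|V\|<\sigma$, so \eqref{eq8} holds with any $\nu\in(0,1/2]$ and $\gamma_1=1/\delta$; hence we may assume $0$ is a critical point. Since $-G$ is again semi-algebraic, $|G(V)|^{1-\nu}=|{-G(V)}|^{1-\nu}$ and $\|\nabla({-G})\|=\|\nabla G\|$, it suffices to prove the estimate for $V$ near $0$ with $G(V)>0$ (on $\{G=0\}$ the left-hand side vanishes).

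The core object is the minimal gradient norm on level sets. Fix $\sigma>0$ with $|G(V)|<\epsilon_0$ for $\|V\|\leq\sigma$, and for $s\in(0,\epsilon_0)$ with $\Sigma_s:=\{V:\|V\|\leq\sigma,\ G(V)=s\}\neq\emptyset$ set $m(s)=\min_{V\in\Sigma_s}\|\nabla G(V)\|$, the minimum being attained by compactness of $\Sigma_s$ and continuity of $\nabla G$. Since $G$ is semi-algebraic and $C^1$, each $\partial_iG$ is semi-algebraic (its graph is first-order definable from that of $G$ through difference quotients, so Tarski--Seidenberg applies), hence so is $\|\nabla G\|^2$; writing $m$ as a first-order formula over these data and applying Tarski--Seidenberg once more, $m$ is a semi-algebraic function of one variable. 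By the monotonicity theorem and the Puiseux classification of one-variable semi-algebraic functions, on some $(0,\epsilon_1)$ either $m\equiv0$ or $m(s)=c\,s^{\theta}\bigl(1+o(1)\bigr)$ with $c>0$ and $\theta\in\mathbb{Q}_{\geq0}$. The case $m\equiv0$ is excluded by the semi-algebraic Sard theorem: the critical set $\mathcal{Z}=\{V:\|V\|\leq\sigma,\ \nabla G(V)=0\}$ and its image $G(\mathcal{Z})\subset\mathbb{R}$ are semi-algebraic, and $G(\mathcal{Z})$ has dimension $<1$, hence is finite and cannot contain $(0,\epsilon_1)$.

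It remains to show $\theta<1$; granting this, shrink $\epsilon_1$ so that $m(s)\geq\tfrac{c}{2}s^{\theta}$ and $s\leq1$ on $(0,\epsilon_1)$, and also shrink $\sigma$ so that $|G(V)|<\epsilon_1$ for $\|V\|<\sigma$. Then for any such $V$ with $G(V)=:s>0$ one has $\|\nabla G(V)\|\geq m(s)\geq\tfrac{c}{2}|G(V)|^{\theta}$, and replacing $\theta$ by $\max(\theta,1/2)\in[1/2,1)$ (legitimate since $|G(V)|\leq1$), with $\nu:=1-\max(\theta,1/2)\in(0,1/2]$ and $\gamma_1:=2/c$, this is exactly \eqref{eq8}. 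To prove $\theta<1$ one uses the valley-curve (talweg) construction: by definable choice and the curve selection lemma one selects a semi-algebraic $C^1$ curve $s\mapsto\gamma(s)$ on a small interval with $\|\gamma(s)\|\leq\sigma$, $G(\gamma(s))=s$, $\|\nabla G(\gamma(s))\|\leq2m(s)$, and, crucially, the angle between $\gamma'(s)$ and $\nabla G(\gamma(s))$ bounded away from $\pi/2$; differentiating $G(\gamma(s))\equiv s$ then gives $\|\gamma'(s)\|\asymp1/\|\nabla G(\gamma(s))\|\asymp s^{-\theta}$, so the finiteness of the arc length of the bounded semi-algebraic curve $\gamma$ forces $\int_0 s^{-\theta}\,ds<\infty$, i.e.\ $\theta<1$. (Equivalently, the desingularizing function $\Psi(s)=\int_0^s m(t)^{-1}\,dt$ is then finite, increasing, concave and definable, and $\|\nabla(\Psi\circ G)(V)\|=\Psi'(G(V))\,\|\nabla G(V)\|\geq m(G(V))^{-1}\|\nabla G(V)\|\geq1$, the Kurdyka--{\L}ojasiewicz inequality, from which \eqref{eq8} is immediate.)

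The main obstacle is precisely this last step: producing the valley curve with its tangent kept uniformly transverse to the level sets in the gradient direction, and deducing that $\theta<1$ — equivalently, that bounded gradient trajectories of $G$ have finite length. This is the technical heart of Kurdyka's theorem, and it is where o-minimality is indispensable: definable choice and curve selection to build $\gamma$, and the monotone/definable variation of $\nabla G/\|\nabla G\|$ along the talweg to secure the angle bound. By comparison, the semi-algebraic Sard input and the reductions and one-variable structure theory used earlier are routine once Tarski--Seidenberg is available.
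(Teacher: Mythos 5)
The paper does not prove this statement at all: Theorem~\ref{the8} is quoted from Kurdyka \cite{Kurdyka1} and used as a black box, so there is no internal argument to compare against; your proposal is a reconstruction of Kurdyka's proof specialized to the semi-algebraic case. Its outline is sound: the reduction to a critical point with critical value zero, the semi-algebraicity of the partial derivatives and of $m(s)=\min_{\Sigma_s}\|\nabla G\|$ via Tarski--Seidenberg, the Puiseux-type expansion $m(s)\sim c\,s^{\theta}$ on a small interval, the exclusion of $m\equiv 0$ by the definable Sard theorem, and the conversion of $\theta<1$ into \eqref{eq8} with $\nu=1-\max(\theta,1/2)$ and $\gamma_1=2/c$ are all correct. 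The one substantive remark is that the step you flag as the main obstacle is lighter than you suggest: to get $\theta<1$ you never need the tangent of the valley curve to stay uniformly transverse to the level sets. Definable choice gives a bounded semi-algebraic selection $\gamma(s)\in\Sigma_s$ with $\|\nabla G(\gamma(s))\|\leq 2m(s)$, cell decomposition makes $\gamma$ of class $C^1$ on a smaller interval, and differentiating $G(\gamma(s))=s$ together with Cauchy--Schwarz already yields $\|\gamma'(s)\|\geq 1/\|\nabla G(\gamma(s))\|\geq c' s^{-\theta}$; since a bounded semi-algebraic curve has finite length (each coordinate is bounded and piecewise monotone), $\int_0 s^{-\theta}\diff s<\infty$, hence $\theta<1$. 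The angle bound along the talweg is what Kurdyka needs for the general o-minimal Kurdyka--{\L}ojasiewicz statement, but in the polynomially bounded semi-algebraic setting it is dispensable, so your sketch closes without that machinery; for the purposes of this paper, however, citing \cite{Kurdyka1} as done is the appropriate level of detail.
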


\begin{lem}\cite{Pierre1}\label{lem6}
The nonlinearity $F_{K,M}\in C^2(\mathbb{R}^M, \mathbb{R})$ in \eqref{eq49} is a nonnegative semi-algebraic function of class $C^2$.
\end{lem}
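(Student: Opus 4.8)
The plan is to verify the three asserted properties of $F_{K,M}(v_1,\dots,v_M)=\sum_{i=1}^{M}F_K(v_i)$ one at a time, in each case reducing the claim about the $M$-variable function to the corresponding property of the single-variable function $F_K$ defined in \eqref{eq411}, and then using that finite sums of coordinate functions inherit each property.

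\textbf{Regularity.} First I would check that $F_K\in C^2(\mathbb{R},\mathbb{R})$. Each of the three branches in \eqref{eq411} is a polynomial, so it suffices to match the values of $F_K$, $F_K'$ and $F_K''$ at the breakpoints $u=\pm K$; by the evenness $F_K(-u)=F_K(u)$, only $u=K$ needs to be checked. A direct substitution gives value $\tfrac14(K^2-1)^2$, first derivative $K^3-K$, and second derivative $3K^2-1$ from both the inner branch $\tfrac14(u^2-1)^2$ and the outer branch $\tfrac{3K^2-1}{2}u^2-2K^3u+\tfrac14(3K^4+1)$. Hence $F_K\in C^2(\mathbb{R},\mathbb{R})$, each map $v\mapsto F_K(v_i)$ lies in $C^2(\mathbb{R}^M,\mathbb{R})$, and so does their finite sum $F_{K,M}$. (Incidentally this re-derives that $|F_K''|$ is globally bounded by $\beta:=3K^2-1$, consistent with Assumption \ref{assum1}, since on $[-K,K]$ one has $F_K''=3u^2-1\in[-1,3K^2-1]$ and $3K^2-1>1$ because $K>1$.)

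\textbf{Nonnegativity.} Since $F_{K,M}$ is a sum of terms $F_K(v_i)$, it suffices to show $F_K\geq 0$ on $\mathbb{R}$. On $[-K,K]$ this is immediate from $F_K(u)=\tfrac14(u^2-1)^2\geq 0$. For $u>K$ we have $F_K'(u)=(3K^2-1)u-2K^3$, which is increasing in $u$ (as $3K^2-1>0$) and satisfies $F_K'(K)=K^3-K=K(K^2-1)>0$; hence $F_K'>0$ on $(K,\infty)$ and $F_K(u)\geq F_K(K)=\tfrac14(K^2-1)^2\geq 0$ there. The case $u<-K$ follows by evenness. Thus $F_K\geq 0$ and $F_{K,M}\geq 0$.

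\textbf{Semi-algebraicity and conclusion.} Here I would show the graph of $F_{K,M}$ in $\mathbb{R}^{M+1}$ is semi-algebraic. The graph of $F_K$ in $\mathbb{R}^2$ is the union of the three sets
\[
\{(u,y): -K\le u\le K,\ y=\tfrac14(u^2-1)^2\},\quad \{(u,y): u>K,\ y=\tfrac{3K^2-1}{2}u^2-2K^3u+\tfrac14(3K^4+1)\}
\]
and the mirror image of the last one; each is cut out by finitely many polynomial equalities and inequalities and is therefore semi-algebraic, and a finite union of semi-algebraic sets is semi-algebraic, so $F_K$ is a semi-algebraic function. Consequently each coordinate map $v\mapsto F_K(v_i)$ is semi-algebraic (its graph is a cylinder over the graph of $F_K$), and semi-algebraic functions are stable under finite sums: the set $\{(v,y_1,\dots,y_M,z): y_i=F_K(v_i)\ \forall i,\ z=\sum_i y_i\}$ is semi-algebraic and its projection onto the $(v,z)$-coordinates is semi-algebraic by the Tarski--Seidenberg theorem. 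Hence $F_{K,M}$ is semi-algebraic, which together with the previous two steps proves the lemma. The only calculational care needed is the derivative matching at $u=\pm K$ and the sign of the quadratic tails; the semi-algebraic part is purely structural, relying on the standard closure properties of semi-algebraic sets under finite unions and projection, so I do not anticipate any genuine obstacle.
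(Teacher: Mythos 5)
Your proof is correct. The paper gives no argument for this lemma at all---it is quoted from Merlet--Pierre \cite{Pierre1}---and your verification (matching $F_K$, $F_K'$, $F_K''$ at $u=\pm K$ using evenness, the sign analysis of the quadratic tails for nonnegativity, and semi-algebraicity via the piecewise-polynomial graph together with closure of semi-algebraic sets under cylinders, finite unions, and projection) is exactly the standard argument behind that citation, so it fills the gap correctly with no genuine deviation to compare.
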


The convergence result is presented as follows.

\begin{thm}
If $S\geq \frac{\beta}{2}$, the sequence $(u_h^n,\omega_h^n)_{n\geq 0}$ which complied by \eqref{eq42} converges to a steady state $(u_h^\infty,\omega_h^\infty)$, i.e., a solution of the equation \eqref{eq2222}.
\end{thm}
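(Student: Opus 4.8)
The plan is to follow the same three-step structure used in Theorems \ref{the2} and \ref{the5}, now applied to the SSI1 matrix reformulation \eqref{eq49} and the energy $\Phi_K$ defined in \eqref{eq50}. First I would establish boundedness of $(u_h^n)_{n\geq 0}$: from the energy-stability estimate \eqref{eq43} the sequence $(E_{K,h}(u_h^n))_{n\geq 0}$ is nonincreasing, and since the modified potential $F_K$ still grows (at least quadratically, with the nonlocal terms controlled via Lemma \ref{lem1} exactly as in the coercivity estimate for $G[z]$ in Theorem \ref{the4}), $E_{K,h}$ is bounded below by a coercive function of $\|u_h\|_2$; hence $(u_h^n)$ lies in a bounded set of $\mathcal{C}_{N\times N}^{per}$, and a convergent subsequence $u_h^{n_k}\to u_h^\infty$ exists. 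As before, using mass conservation and the eigendecomposition \eqref{eq31} of $A_h$ and $A_J$, I would pass to the reduced $(M-1)$-dimensional system, keeping the first coordinate $u_1^n\equiv u_1^0$ fixed, and abbreviate the reduced system again as \eqref{eq49} with $A_h$ now symmetric positive definite; write $\tilde\Phi_K$ for the reduced energy.

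Next I would verify hypotheses \textbf{H1} and \textbf{H2} of Theorem \ref{the1} for $G=\tilde\Phi_K$. For \textbf{H1}: taking the inner product of the first equation in \eqref{eq49} with $W^{n+1}$ gives, together with \eqref{eq43} rewritten in matrix form, $\Phi_K(U^n)-\Phi_K(U^{n+1})\geq \tau (W^{n+1})^T A_h W^{n+1} = \tfrac{1}{\tau}(U^{n+1}-U^n)^T A_h^{-1}(U^{n+1}-U^n)\geq \tfrac{1}{\tau\lambda_M}\|U^{n+1}-U^n\|^2$, where $\lambda_M$ is the largest eigenvalue of $A_h$; this is exactly the computation in Theorem \ref{the5}. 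For \textbf{H2}: eliminating $W^{n+1}$ from \eqref{eq49} yields
\[
-\frac{1}{\tau}A_h^{-1}(U^{n+1}-U^n)=\nabla F_{K,M}(U^n)+\varepsilon^2 A_J U^{n+1}+S(U^{n+1}-U^n).
\]
The obstacle here, and the one genuinely new feature compared with the polynomial-nonlinearity cases, is that the nonlinear term is evaluated at $U^n$ rather than $U^{n+1}$, so I cannot directly read off $\nabla\Phi_K(U^{n+1})$. I would handle this by adding and subtracting: $\nabla F_{K,M}(U^n)=\nabla F_{K,M}(U^{n+1})+\big(\nabla F_{K,M}(U^n)-\nabla F_{K,M}(U^{n+1})\big)$, and since $F_{K,M}\in C^2$ with $F_K''$ uniformly bounded by $\beta$ (Assumption \ref{assum1}), $\nabla F_{K,M}$ is globally Lipschitz with constant $\beta$, giving $\|\nabla F_{K,M}(U^n)-\nabla F_{K,M}(U^{n+1})\|\leq \beta\|U^{n+1}-U^n\|$. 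Therefore
\[
\nabla \Phi_K(U^{n+1})=-\frac{1}{\tau}A_h^{-1}(U^{n+1}-U^n)-S(U^{n+1}-U^n)+\varepsilon^2 A_J(U^n-U^{n+1})+\big(\nabla F_{K,M}(U^{n+1})-\nabla F_{K,M}(U^n)\big),
\]
and taking norms, $\|\nabla\Phi_K(U^{n+1})\|\leq \big(\tfrac{1}{\tau}\|A_h^{-1}\|+S+\varepsilon^2\|A_J\|+\beta\big)\|U^{n+1}-U^n\|=:c_3\|U^{n+1}-U^n\|$, which is \textbf{H2}.

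Finally, the {\L}ojasiewicz inequality for $\tilde\Phi_K$ follows not from Theorem \ref{lem3} (it is not analytic) but from Theorem \ref{the8} together with Lemma \ref{lem6}: $F_{K,M}$ is semi-algebraic of class $C^2$, and adding the polynomial (hence semi-algebraic) quadratic form $\tfrac{\varepsilon^2}{2}U^T A_J U$ and restricting to the affine subspace $\{u_1=u_1^0\}$ preserves semi-algebraicity, so $\tilde\Phi_K\in C^1$ is semi-algebraic and satisfies \eqref{eq8} at every point of $\mathbb{R}^{M-1}$. With \textbf{H1}, \textbf{H2}, and boundedness in hand, Theorem \ref{the1} yields convergence of the whole sequence $(U^n)_{n\geq 0}$ to some $U^\infty$ with $E_1^T U^\infty=E_1^T U^0$; translating back and passing to the limit $n\to\infty$ in \eqref{eq42} (the nonlinear term $F_K'(u_h^n)\to F_K'(u_h^\infty)$ by continuity, and the nonlocal and Laplacian terms are linear, and $u_h^{n+1}-u_h^n\to 0$) shows $(u_h^\infty,\omega_h^\infty)$ solves the steady-state system \eqref{eq2222}. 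The only point requiring a little care is that the limiting equation involves $F_K'$, not $F'$; since $F_K\equiv F$ on $[-K,K]$, this is the genuine steady state of the truncated problem, which is the natural statement here.
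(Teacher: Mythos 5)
Your argument is correct and follows essentially the same path as the paper's proof: boundedness from the energy stability \eqref{eq43}, reduction to the mass-constrained subspace, \textbf{H1} from the energy decay, \textbf{H2} by adding and subtracting $\nabla F_{K,M}(U^{n+1})$ and using the global Lipschitz bound $\beta$ from Assumption \ref{assum1}, the {\L}ojasiewicz inequality for the semi-algebraic $\Phi_K$ via Theorem \ref{the8} and Lemma \ref{lem6}, and finally Theorem \ref{the1}. One minor slip: since the nonlocal term in \eqref{eq49} is already evaluated at $U^{n+1}$, the correct identity is $\nabla\Phi_K(U^{n+1})=-\frac{1}{\tau}A_h^{-1}(U^{n+1}-U^n)-S(U^{n+1}-U^n)+\nabla F_{K,M}(U^{n+1})-\nabla F_{K,M}(U^n)$, with no $\varepsilon^2A_J(U^n-U^{n+1})$ term, so the constant in \textbf{H2} does not need $\varepsilon^2\|A_J\|$ and your stated bound remains valid, just not sharp.
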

\begin{proof}
Following the arguments in \textbf{Step 1} and \textbf{Step 2} of Theorem \ref{the2}, we can show that the sequence $(U^n)_{n\geq 0}$ is bounded in $\mathbb{R}^M$. Furthermore, we may assume that the matrix $A_h$ in \eqref{eq49} is  symmetric positive definite matrix and invertible. By Lemma \ref{lem6}, the function $\Phi_K$ satisfies the {\L}ojasiewicz inequality at each point in $\mathbb{R}^M$.

The next step is to verify that the sequence $(U^n)_{n\geq 0}$ satisfies the assumptions of Theorem \ref{the1}.
According to Theorem \ref{the7}, the following holds
\begin{align*}
    \Phi_K(U^n)- \Phi_K(U^{n+1})&\geq \tau (W^{n+1})^T A_h W^{n+1}\\
    &= \frac{1}{\tau}(U^{n+1}-U^n)^T A_h^{-1}(U^{n+1}-U^n)\\
    &\geq \frac{1}{\tau \lambda_M}\|U^{n+1}-U^n\|^2, \quad \forall n\geq 0,
    \end{align*}
where $\lambda_{M}>0$ is the largest eigenvalue of $A_h$. Let $G(U)=\Phi_K(U)$. Then assumption $\textbf{H1}$ is satisfied. Next, we rewrite \eqref{eq49} as
\begin{align*}
        -\frac{1}{\tau}A_h^{-1}(U^{n+1}-U^n)&=\nabla F_{K,M}(U^n)+\varepsilon^2 A_J U^{n+1}+S(U^{n+1}-U^n)\\
        &=\nabla F_{K,M}(U^n)-\nabla F_{K,M}(U^{n+1})+\nabla \Phi_K(U^{n+1})+S(U^{n+1}-U^n),
\end{align*}
and using the triangle inequality, we obtain
\begin{align*}
    \|\nabla \Phi_K(U^{n+1})\|&\leq \frac{1}{\tau}\|A_h^{-1}(U^{n+1}-U^n)\|+(\beta+S) \|(U^{n+1}-U^n)\|\\
    &\leq C(\tau,\|A_h^{-1}\|,\beta,S)\|(U^{n+1}-U^n)\|, \quad \forall n\geq 0,
\end{align*}
where $C>0$ is a constant independent of $n$. This implies that assumption $\textbf{H2}$ holds. Applying Theorem \ref{the1} ~completes the proof.
\end{proof}
\begin{rmk}
For the three energy-stable first-order fully discrete schemes mentioned above, application of the {\L}ojasiewicz inequality shows that a local minimizer of the discrete energy is stable with respect to the time step size $\tau$. For detailed proofs, we refer the interested reader to \cite{Pierre2,Pierre3}.
\end{rmk}
\section{Convergence to equilibrium for the second-order fully discrete schemes}\label{sec4}
This section focus on two energy-stable second-order fully discrete schemes for the N-CH equation, including the BDF2 scheme and the linearly implicit (2LI) scheme. First, we construct a fully discrete BDF2 scheme and establish its unique solvable and modified energy stability. Then, by combining the energy stability with the {\L}ojasiewicz inequality \eqref{eq8} and utilizing Theorem \ref{the1}, we rigorously prove that the sequence generated by the BDF2 scheme converges to a steady state as time goes to infinity. Finally, we introduce a fully discrete 2LI scheme and prove its discrete modified energy stability. The convergence analysis for this scheme follows  similar arguments. Therefore, we only outline the main idea and state the convergence result.

\subsection{BDF2 scheme}\label{BDF2}
The BDF2 scheme for the N-CH equation reads: given $u_h^0,u_h^1\in \mathcal{C}_{N\times N}^{per}$, for all $n\geq 1$, find $(u_h^{n+1}, \omega_h^{n+1}) \in \mathcal{C}_{N\times N}^{per}\times \mathcal{C}_{N\times N}^{per}$ such that
\begin{eqnarray}\label{equ1}
\left\{
\begin{array}{lllll}
\frac{3u_h^{n+1}-4u_h^n+u_h^{n-1}}{2\tau}= \Delta_h \omega_h^{n+1}, \hspace{0.7cm}\ \\
\omega_h^{n+1} =F'(u_h^{n+1})+\varepsilon^2 [J\circledast \mathbf{1}]u_h^{n+1}-\varepsilon^2[J\circledast u_h^{n+1}].\\
\end{array}
\right.
\end{eqnarray}
We assume that $(u_h^1\|\mathbf{1})=(u_h^0\|\mathbf{1})$. Thus, the mass is conserved, which can be verified by taking the discrete inner product of both sides of the first equation in \eqref{equ1} with $\mathbf{1}$; i.e., $(u_h^{n+1}\| \mathbf{1})=(u_h^n\| \mathbf{1})=(u_h^0\|\mathbf{1})$ for all $n\geq 0$. The discrete energy is still $E_h(u_h)$, as defined in \eqref{1}.

For notational simplicity, we denote the difference $\delta u_h^n=u^n-u^{n-1}$ and define the following relation:
\begin{equation}\label{equ2}
    3u_h^{n+1}-4u_h^n+u_h^{n-1}=2\delta u_h^{n+1}+(\delta u_h^{n+1}-\delta u_h^n).
\end{equation}
We first prove the unique solvability of the BDF2 scheme \eqref{equ1}.

\begin{thm}\label{Th1}
If $\tau$ satisfies condition \eqref{tau1}, then, the BDF2 scheme \eqref{equ1} has a unique solution.
\end{thm}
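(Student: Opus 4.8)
The plan is to mirror the variational/convexity argument used for the backward Euler scheme in Theorem~\ref{the4}. First I would eliminate $\omega_h^{n+1}$ using the second line of \eqref{equ1}, which turns the BDF2 scheme into a single nonlinear equation for $u_h^{n+1}$ on the affine mass set $\mathcal{M}_h^{\ast}=\{z\in\mathcal{C}_{N\times N}^{per}:(z\|\mathbf{1})=(u_h^0\|\mathbf{1})\}$; here one uses $(u_h^1\|\mathbf{1})=(u_h^0\|\mathbf{1})$ together with mass conservation, so that $\tilde u:=\tfrac13(4u_h^n-u_h^{n-1})\in\mathcal{M}_h^{\ast}$ and $3u_h^{n+1}-4u_h^n+u_h^{n-1}=3(u_h^{n+1}-\tilde u)$. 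Applying $(-\Delta_h)^{-1}$ on $\mathcal{M}_h^0$, this equation is exactly the Euler--Lagrange equation on $\mathcal{M}_h^{\ast}$ of the functional
\[
G[z]=\frac{3}{4\tau}\|z-\tilde u\|_{-1}^2+E_h(z),
\]
with $E_h$ the discrete energy \eqref{1}, so that it suffices to show $G$ has a unique critical point on $\mathcal{M}_h^{\ast}$.

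Coercivity of $G$ follows at once from $\|z-\tilde u\|_{-1}^2\ge0$ and the bound $E_h(z)\ge\tfrac12\|z\|_2^2-\tfrac34|\Omega|$ established in the proof of Theorem~\ref{the4} via Lemma~\ref{lem1}; since $\mathcal{M}_h^{\ast}$ is finite-dimensional and $G$ is smooth ($F$ being a polynomial), a minimizer exists. For strict convexity I would compute the second variation along $\psi\in\mathcal{M}_h^0$ exactly as in \eqref{eq5}, now with the BDF2 weight,
\[
\frac{\diff^2 G}{\diff s^2}[z+s\psi]\Big|_{s=0}=\frac{3}{2\tau}\|\psi\|_{-1}^2+\varepsilon^2[J\circledast\mathbf{1}]\|\psi\|_2^2-\varepsilon^2 h^2([J\circledast\psi]\|\psi)+3\|z\psi\|_2^2-\|\psi\|_2^2,
\]
and control the nonlocal term via Lemma~\ref{lem2} applied to $h^2([J\circledast\psi]\|\psi)=-h^2([J\circledast\psi]\|\Delta_h(-\Delta_h)^{-1}\psi)$ with $\alpha=1/(\tau\varepsilon^2)$, giving $\varepsilon^2 h^2([J\circledast\psi]\|\psi)\le\tfrac{1}{2\tau}\|\psi\|_{-1}^2+\tfrac{C_\textrm{J}\tau\varepsilon^4}{2}\|\psi\|_2^2$. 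Combined with \eqref{eq-assum1}, this yields
\[
\frac{\diff^2 G}{\diff s^2}[z+s\psi]\Big|_{s=0}\ge\frac{1}{\tau}\|\psi\|_{-1}^2+\Big(\gamma_0-\frac{C_\textrm{J}\tau\varepsilon^4}{2}\Big)\|\psi\|_2^2+3\|z\psi\|_2^2,
\]
which is strictly positive for every $\psi\neq0$ in $\mathcal{M}_h^0$ precisely when $\tau$ satisfies \eqref{tau1}, since then $\gamma_0-\tfrac{C_\textrm{J}\tau\varepsilon^4}{2}\ge0$ and $(-\Delta_h)^{-1}$ is positive definite on $\mathcal{M}_h^0$. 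Hence $G$ is strictly convex on $\mathcal{M}_h^{\ast}$.

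A smooth strictly convex functional on a convex set has at most one critical point, so the minimizer found above is the unique critical point $u_h^{n+1}$ of $G$ on $\mathcal{M}_h^{\ast}$, hence the unique solution of the reduced equation; defining $\omega_h^{n+1}$ through the second line of \eqref{equ1} then yields the unique solution pair of \eqref{equ1}. The only point requiring care is of bookkeeping rather than substance: one must verify that the \emph{same} step-size restriction \eqref{tau1} as for backward Euler still suffices. It does, because the BDF2 weight $3/(2\tau)$ exceeds the backward Euler weight $1/\tau$, so after absorbing the term $\tfrac{1}{2\tau}\|\psi\|_{-1}^2$ produced by Lemma~\ref{lem2} one is left with exactly the surplus $\tfrac{1}{\tau}\|\psi\|_{-1}^2$ that appeared in Theorem~\ref{the4}, and no stronger constraint on $\tau$ is introduced.
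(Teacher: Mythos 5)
Your proposal is correct and follows essentially the same route as the paper: a variational reformulation on the mass-constrained set, coercivity via the lower bound $E_h(z)\geq\tfrac12\|z\|_2^2-\tfrac34|\Omega|$, and strict convexity from the second-variation estimate with Lemma~\ref{lem2} under \eqref{tau1}. The only (cosmetic) difference is that you complete the square with $\tilde u=\tfrac13(4u_h^n-u_h^{n-1})$, so your functional equals the paper's $G[z]$ up to the $z$-independent constant $\tfrac{1}{12\tau}\|u_h^{n-1}-u_h^n\|_{-1}^2$, which lets you skip the Young's-inequality step in the coercivity argument.
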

\begin{proof}
We consider the energy functional $G[z]$ on the space $\mathcal{M}_h^{\ast}=\{z\in \mathcal{C}_{N\times N}^{per}| \; (z\|\mathbf{1} )= ( u_h^n \| \mathbf{1})\}$,
\[
    G[z]=\frac{3}{4\tau}\|z-u_h^n\|_{-1}^2+\frac{h^2}{2\tau}\left((-\Delta_h)^{-1}(u_h^{n-1}-u_h^n)\|z-u_h^n\right)+E_h(z).
\]
Using Lemma \ref{lem1} and Young's inequality, we have
\begin{align*}
    G[z] &\geq-\frac{1}{12\tau}\|u_h^{n-1}-u_h^n\|_{-1}^2+\frac{\varepsilon^2[J\circledast \mathbf{1}]}{2}\|z\|_2^2-\frac{\varepsilon^2 h^2}{2}(z\| [J\circledast z])+\frac{1}{4}\|z\|_4^4-\frac{1}{2}\|z\|_2^2+\frac{1}{4}|\Omega|\\
    &\geq -\frac{1}{12\tau}\|u_h^{n-1}-u_h^n\|_{-1}^2+\frac{1}{4}(4\|z\|_2^2-4|\Omega|)-\frac{1}{2}\|z\|_2^2+\frac{1}{4}|\Omega|\\
    &=-\frac{1}{12\tau}\|u_h^{n-1}-u_h^n\|_{-1}^2+\frac{1}{2}\|z\|_2^2-\frac{3}{4} |\Omega|.
\end{align*}
This indicates that the functional $G[z]$ is coercive and has a minimizer. It remains to demonstrate that for any $s\in \mathbb{R}$ and $\psi \in \mathcal{M}_h^0$, $G[z]$ is strictly convex. It is easy to obtain
\begin{equation*}
\frac{\diff^2 G}{\diff s^2}[z+s\psi]\big|_{s=0}=\frac{3}{2\tau}\|\psi\|_{-1}^2+\varepsilon^2 [J\circledast\mathbf{1}]\|\psi\|^2_2-\varepsilon^2 h^2([J\circledast \psi]\|\psi)+3\|z\psi\|_2^2-\|\psi\|_2^2.
\end{equation*}
If $\tau$ satisfies condition \eqref{tau1}, by using \eqref{eq4}, we obtain
\begin{align*}
\frac{\diff^2 G}{\diff s^2}[z+s\psi]\big|_{s=0}&\geq \frac{1}{\tau}\|\psi\|_{-1}^2+\left(\varepsilon^2 [J\circledast \mathbf{1}]-\frac{C_\textrm{J}\tau\varepsilon^4}{2}-1\right)\|\psi\|_2^2+3\|z\psi\|_2^2\\
&=\frac{1}{\tau}\|\psi\|_{-1}^2+\left(\gamma_0-\frac{C_\textrm{J}\tau\varepsilon^4}{2}\right)\|\psi\|_2^2+3\|z\psi\|_2^2>0.
\end{align*}
This implies that the functional $G[z]$ has a unique minimizer $u_h^{n+1}$, which is the unique solution of the BDF2 scheme \eqref{equ1}.
\end{proof}

We now shows that the BDF2 scheme \eqref{equ1} satisfies the modified energy stability.
\begin{thm}\label{Th2}
Suppose that the time step $\tau$ satisfies condition \eqref{tau1}. Then, for the sequence $(u_h^n)_{n\geq 0}$ complies with the proposed scheme \eqref{equ1}, the following holds:
\begin{equation}\label{equ9}
\widetilde{E}_h(u_h^{n+1},\delta u_h^{n+1})\leq \widetilde{E}_h(u_h^n,\delta u_h^n),
\end{equation}
where
$
    \widetilde{E}_h(u_h,v_h)=E_h(u_h)+\frac{1}{4\tau}\| v_h\|_{-1}^2.
$
\end{thm}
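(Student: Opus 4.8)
The plan is to read off the modified energy inequality \eqref{equ9} directly from the variational structure already uncovered in the proof of Theorem~\ref{Th1}, rather than manipulating \eqref{equ1} by hand. Recall that in that proof one shows that, under condition~\eqref{tau1}, the functional
\[
G[z]=\frac{3}{4\tau}\|z-u_h^n\|_{-1}^2+\frac{h^2}{2\tau}\left((-\Delta_h)^{-1}(u_h^{n-1}-u_h^n)\,\|\,z-u_h^n\right)+E_h(z)
\]
is coercive and strictly convex on $\mathcal{M}_h^{\ast}$, so it has a unique minimizer, and that this minimizer is exactly the solution $u_h^{n+1}$ of the BDF2 scheme \eqref{equ1}. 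Since $u_h^n\in\mathcal{M}_h^{\ast}$, this instantly gives $G[u_h^{n+1}]\le G[u_h^n]$. This is the only place where the time-step restriction \eqref{tau1} is needed; everything afterwards is algebra.

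Next I would evaluate $G$ at the two endpoints. At $z=u_h^n$ both quadratic terms vanish, so $G[u_h^n]=E_h(u_h^n)$. At $z=u_h^{n+1}$, writing $\delta u_h^{n+1}=u_h^{n+1}-u_h^n$ and $u_h^{n-1}-u_h^n=-\delta u_h^n$, one obtains
\[
G[u_h^{n+1}]=E_h(u_h^{n+1})+\frac{3}{4\tau}\|\delta u_h^{n+1}\|_{-1}^2-\frac{1}{2\tau}h^2\left((-\Delta_h)^{-1}\delta u_h^n\,\|\,\delta u_h^{n+1}\right).
\]
At this point I would invoke the mass conservation for \eqref{equ1} (already proved) to note that $\delta u_h^n,\delta u_h^{n+1}\in\mathcal{M}_h^0$, which is exactly what is required for $(-\Delta_h)^{-1}$ and the norm $\|\cdot\|_{-1}$ to be defined on them.

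Then I would bound the cross term. Since $(p,q)\mapsto h^2\left((-\Delta_h)^{-1}p\,\|\,q\right)$ is an inner product on $\mathcal{M}_h^0$ (because $(-\Delta_h)^{-1}$ is self-adjoint and positive definite there), Cauchy--Schwarz followed by Young's inequality gives
\[
-\frac{1}{2\tau}h^2\left((-\Delta_h)^{-1}\delta u_h^n\,\|\,\delta u_h^{n+1}\right)\ \ge\ -\frac{1}{4\tau}\|\delta u_h^n\|_{-1}^2-\frac{1}{4\tau}\|\delta u_h^{n+1}\|_{-1}^2 .
\]
Combining the last three displays yields $E_h(u_h^{n+1})+\frac{1}{2\tau}\|\delta u_h^{n+1}\|_{-1}^2\le E_h(u_h^n)+\frac{1}{4\tau}\|\delta u_h^n\|_{-1}^2$. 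Since the left-hand side equals $\widetilde{E}_h(u_h^{n+1},\delta u_h^{n+1})+\frac{1}{4\tau}\|\delta u_h^{n+1}\|_{-1}^2$ and the right-hand side is $\widetilde{E}_h(u_h^n,\delta u_h^n)$, inequality \eqref{equ9} follows at once, in fact together with the extra nonnegative dissipation term $\frac{1}{4\tau}\|\delta u_h^{n+1}\|_{-1}^2$.

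I do not expect a genuine obstacle: once the variational viewpoint is adopted the whole proof collapses to evaluating a quadratic functional at two points plus one application of Young's inequality, and the only care required is bookkeeping — making sure all increments lie in $\mathcal{M}_h^0$ and that the $G$ used here is literally the functional whose unique minimizer was identified with $u_h^{n+1}$ in Theorem~\ref{Th1}. A more computational route would be to pair the first equation of \eqref{equ1} with $\omega_h^{n+1}$, substitute the second, and use the identity \eqref{equ2} together with convexity of the quartic part $\tfrac14\|\cdot\|_4^4$, nonnegativity of the nonlocal quadratic form (Lemma~\ref{lem1}), and Lemma~\ref{lem2} to absorb the concave $-\tfrac12\|\cdot\|_2^2$ contribution using \eqref{tau1}; this works as well but is noticeably messier, so I would keep it only as a fallback.
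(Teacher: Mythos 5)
Your proposal is correct, but it proves \eqref{equ9} by a genuinely different route than the paper. The paper argues ``by testing'': it pairs the first equation of \eqref{equ1} with $(-\Delta_h)^{-1}\delta u_h^{n+1}$ and the second with $\delta u_h^{n+1}$, uses the BDF2 identity \eqref{equ2}, a Taylor expansion of $F$, and the nonlocal estimate \eqref{eq4} (via Lemmas~\ref{lem1}--\ref{lem2}) to absorb the bad terms under \eqref{tau1} --- essentially the ``messier fallback'' you mention. You instead exploit the variational characterization from Theorem~\ref{Th1}: under \eqref{tau1} the BDF2 solution $u_h^{n+1}$ is the unique minimizer of $G$ on $\mathcal{M}_h^{\ast}$, so $G[u_h^{n+1}]\le G[u_h^n]=E_h(u_h^n)$, and one application of Cauchy--Schwarz/Young on the cross term yields \eqref{equ9} with the extra dissipation $\frac{1}{4\tau}\|\delta u_h^{n+1}\|_{-1}^2$. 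Your bookkeeping is right (the increments lie in $\mathcal{M}_h^0$ by mass conservation, so all $\|\cdot\|_{-1}$ quantities are well defined), and the identification ``minimizer $=$ scheme solution'' is exactly what Theorem~\ref{Th1} provides, so there is no gap. What each approach buys: yours is shorter and makes transparent that \eqref{tau1} enters only through the solvability/convexity of $G$; the paper's computation yields the stronger estimate \eqref{equ10}, whose additional dissipation terms $\bigl(\tfrac{\gamma_0}{2}-\tfrac{\varepsilon^4 C_J\tau}{4}\bigr)\|\delta u_h^{n+1}\|_2^2$ and $\tfrac{1}{4\tau}\|\delta u_h^{n+1}-\delta u_h^n\|_{-1}^2$ are precisely what is invoked later in \eqref{equ14} for the convergence proof of Theorem~\ref{Th3}. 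If you replaced Young by the exact polarization identity
\begin{equation*}
-\frac{1}{2\tau}\,h^2\bigl((-\Delta_h)^{-1}\delta u_h^n\,\|\,\delta u_h^{n+1}\bigr)
=\frac{1}{4\tau}\|\delta u_h^{n+1}-\delta u_h^n\|_{-1}^2-\frac{1}{4\tau}\|\delta u_h^n\|_{-1}^2-\frac{1}{4\tau}\|\delta u_h^{n+1}\|_{-1}^2,
\end{equation*}
your argument would also recover the $\|\delta u_h^{n+1}-\delta u_h^n\|_{-1}^2$ dissipation (though still not the $\|\delta u_h^{n+1}\|_2^2$ term), so as a proof of Theorem~\ref{Th2} itself it is complete; only as a replacement for the paper's proof would the downstream use in Theorem~\ref{Th3} need minor adjustment.
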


\begin{proof}
We take the discrete $\ell^2$ inner product of the first equality in \eqref{equ1} with $(-\Delta_h)^{-1}\delta u_h^{n+1}$ and of the second equality with $\delta u_h^{n+1}$. Using \eqref{equ2} then yields
 \begin{align}\label{equ5}
        &\frac{1}{\tau}\|\delta u_h^{n+1}\|_{-1}^2+\frac{1}{2\tau}h^2((-\Delta_h)^{-1}(\delta u_h^{n+1}-\delta u_h^n)\|\delta u_h^{n+1})\nonumber\\
        &+h^2(F'(u_h^{n+1})\| \delta u_h^{n+1})+\varepsilon^2 [J\circledast \mathbf{1}]h^2(u_h^{n+1}\|\delta u_h^{n+1})-\varepsilon^2h^2([J\circledast u_h^{n+1}]\|\delta u_h^{n+1})=0
 \end{align}
 For the second term on the left-hand side of \eqref{equ5}, a direct calculation gives
 \begin{equation}\label{equ6}
    \frac{1}{2\tau}h^2\left((-\Delta_h)^{-1}(\delta u_h^{n+1}-\delta u_h^n)\|\delta u_h^{n+1}\right)=
    \frac{1}{4\tau}\|\delta u_h^{n+1}\|_{-1}^2
    -\frac{1}{4\tau}\|\delta u_h^n\|_{-1}^2
    +\frac{1}{4\tau}\|\delta u_h^{n+1}-\delta u_h^n\|_{-1}^2.
 \end{equation}
 For the nonlinearity term in the equality \eqref{equ5}, a Taylor expansion gives
 \begin{equation}\label{equ7}
        h^2(F'(u_h^{n+1})\| \delta u_h^{n+1})\geq h^2(F(u_h^{n+1})\| \mathbf{1})-h^2(F(u_h^n)\| \mathbf{1})-\frac{1}{2}\|\delta u_h^{n+1}\|_2^2.
 \end{equation}
  By Lemma \ref{lem1} and estimation \eqref{eq4}, the nonlocal term in \eqref{equ5} becomes
 \begin{align}\label{equ8}
        &\varepsilon^2 [J\circledast \mathbf{1}]h^2(u_h^{n+1}\|\delta u_h^{n+1})-\varepsilon^2h^2([J\circledast u_h^{n+1}]\|\delta u_h^{n+1})\nonumber \\
        &\geq \frac{\varepsilon^2}{2}[J\circledast \mathbf{1}]\|u_h^{n+1}\|_2^2-\frac{\varepsilon^2}{2}[J\circledast \mathbf{1}]\|u_h^n\|_2^2+\frac{\varepsilon^2}{2}[J\circledast \mathbf{1}]\|\delta u_h^{n+1}\|_2^2\nonumber \\
        &-\frac{\varepsilon^2}{2}h^2([J\circledast u_h^{n+1}]\|u_h^{n+1})+\frac{\varepsilon^2}{2}h^2([J\circledast u_h^n]\|u_h^n)-\frac{\varepsilon^4 C_J \tau}{4}\|\delta u_h^{n+1}\|_2^2-\frac{1}{4\tau}\|\delta u_h^{n+1}\|_{-1}^2.
 \end{align}
 Combining \eqref{equ6}-\eqref{equ8}, we deduce
 \begin{equation}\label{equ10}
    \frac{3}{4\tau}\|\delta u_h^{n+1}\|_{-1}^2+\left(\frac{\gamma_0}{2}-\frac{\varepsilon^4 C_J \tau}{4}\right)\|\delta u_h^{n+1}\|_2^2+\frac{1}{4\tau}\|\delta u_h^{n+1}-\delta u_h^n\|_{-1}^2+\widetilde{E}_h(u_h^{n+1},\delta u_h^{n+1})\leq \widetilde{E}_h(u_h^n,\delta u_h^n).
 \end{equation}
 If $\tau$ satisfies condition \eqref{tau1}, we obtain the modified energy stability \eqref{equ9}.
\end{proof}

Now, we prove that the sequence generated by the BDF2 scheme \eqref{equ1} converges to a steady state as time goes to infinity. This proof follows a similar argument to that of Theorem \ref{the2},
but we must verify that the modified energy $\tilde{\Phi}$ satisfies the {\L}ojasiewicz inequality and the assumptions of Theorem \ref{the1} in $\mathbb{R}^{2M}$.
\begin{thm}\label{Th3}
If $\tau< \frac{2\gamma_0}{C_\textrm{J}\varepsilon^4}$, then as $n\to \infty$, the sequence $(u_h^n,\omega_h^n)_{n\geq 0}$ generated by the scheme \eqref{equ1} converges to a steady state $(u_h^\infty,\omega_h^\infty)$, which is a solution of the equation \eqref{eq2222}.
\end{thm}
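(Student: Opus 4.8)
The plan is to mirror the three-step structure used in Theorem \ref{the2}, but now carried out in $\mathbb{R}^{2M}$ because the BDF2 scheme is a two-step method and its natural state vector is the pair $(U^{n},U^{n-1})$, or equivalently $(U^n,\delta U^n)$. First I would use the modified energy stability \eqref{equ9} together with the coercivity bound $\widetilde{E}_h(u_h,v_h)\geq E_h(u_h)+\tfrac{1}{4\tau}\|v_h\|_{-1}^2 \geq \tfrac12\|u_h\|_2^2 - \tfrac34|\Omega|$ (from the proof of Theorem \ref{Th1}) to conclude that $(\widetilde{E}_h(u_h^n,\delta u_h^n))_{n\geq 1}$ is nonincreasing and bounded below, hence $(u_h^n)_{n\geq 0}$ is bounded in $\mathcal{C}_{N\times N}^{per}$ and a subsequence converges to some $u_h^\infty$. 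Summing \eqref{equ10} over $n$ also gives $\sum_n \|\delta u_h^{n+1}\|_{-1}^2 < \infty$ and $\sum_n \|\delta u_h^{n+1}-\delta u_h^n\|_{-1}^2 < \infty$, which will be needed to control the lagged differences.

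Next I would rewrite \eqref{equ1} in matrix form: with $U^n\in\mathbb{R}^M$, $\delta U^{n+1}=U^{n+1}-U^n$, the scheme reads $\tfrac{1}{2\tau}(2\delta U^{n+1}+(\delta U^{n+1}-\delta U^n)) = -A_h W^{n+1}$, $W^{n+1}=\nabla F_M(U^{n+1})+\varepsilon^2 A_J U^{n+1}$. As in Step 2 of Theorem \ref{the2}, multiplying by $E_1^T$ and inducting gives $E_1^T U^n = E_1^T U^0$, so after the orthonormal change of basis from \cite{Pierre1} I reduce to the $(M-1)$-dimensional system with invertible $\tilde A_h$, working with $\tilde U, \tilde W, \tilde F_M, \tilde\Phi$ exactly as in \eqref{eq19}--\eqref{eq32}. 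The state vector for Theorem \ref{the1} is then $X^n := (\tilde U^n, \tilde U^{n-1})\in\mathbb{R}^{2(M-1)}$, and the energy functional on $\mathbb{R}^{2(M-1)}$ is $G(X) = G(\tilde U,\tilde V) := \tilde\Phi(\tilde U) + \tfrac{1}{4\tau}(\tilde U - \tilde V)^T\tilde A_h^{-1}(\tilde U-\tilde V)$, which is the matrix realization of $\widetilde E_h$; this $G$ is real analytic (polynomial plus a positive-definite quadratic form), so by Theorem \ref{lem3} it satisfies the {\L}ojasiewicz inequality at every point, in particular at $X^\infty=(\tilde U^\infty,\tilde U^\infty)$.

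Then I would verify \textbf{H1} and \textbf{H2} of Theorem \ref{the1} for $(X^n)$. For \textbf{H1}: inequality \eqref{equ10}, read in the reduced coordinates, is exactly $G(X^n) - G(X^{n+1}) \geq \tfrac{1}{4\tau}\|\delta \tilde U^{n+1}-\delta\tilde U^n\|_{-1}^2 + (\tfrac{\gamma_0}{2}-\tfrac{\varepsilon^4 C_J\tau}{4})\|\delta\tilde U^{n+1}\|_2^2 + \tfrac{3}{4\tau}\|\delta\tilde U^{n+1}\|_{-1}^2$, and since $\tau<2\gamma_0/(C_J\varepsilon^4)$ the right side dominates $c_2(\|\delta\tilde U^{n+1}\|^2 + \|\delta\tilde U^{n+1}-\delta\tilde U^n\|^2) = c_2\|X^{n+1}-X^n\|^2$ after using the norm equivalences $\|\cdot\|_{-1}\sim\|\cdot\|_2\sim\|\cdot\|$ on the finite-dimensional space $\mathcal{M}_h^0$ (all constants depend only on $h$, $\tau$, $\gamma_0$, $\varepsilon$, not on $n$). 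For \textbf{H2}: eliminating $\tilde W^{n+1}$ from the reduced scheme gives $-\tilde A_h^{-1}\big(\tfrac{1}{\tau}\delta\tilde U^{n+1} + \tfrac{1}{2\tau}(\delta\tilde U^{n+1}-\delta\tilde U^n)\big) = \tilde\nabla\tilde F_M(\tilde U^{n+1}) + \varepsilon^2\tilde A_J\tilde U^{n+1} = \tilde\nabla\tilde\Phi(\tilde U^{n+1})$, whence $\|\tilde\nabla\tilde\Phi(\tilde U^{n+1})\|\leq \tfrac{3}{2\tau}\|\tilde A_h^{-1}\|(\|\delta\tilde U^{n+1}\|+\|\delta\tilde U^{n+1}-\delta\tilde U^n\|)\leq c_3\|X^{n+1}-X^n\|$; and $\nabla_{\tilde U} G(X^{n+1})$ differs from $\tilde\nabla\tilde\Phi(\tilde U^{n+1})$ only by $\tfrac{1}{2\tau}\tilde A_h^{-1}(\delta\tilde U^{n+1}-\delta\tilde U^n)$-type terms, while $\nabla_{\tilde V} G(X^{n+1}) = -\tfrac{1}{2\tau}\tilde A_h^{-1}\delta\tilde U^{n+1}$ (since the $\tilde V$-slot of $X^{n+1}$ is $\tilde U^n$, so $\tilde U^{n+1}-\tilde V = \delta\tilde U^{n+1}$), both of which are again $O(\|X^{n+1}-X^n\|)$. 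Hence $\|\nabla G(X^{n+1})\|\leq c_3'\|X^{n+1}-X^n\|$. Applying Theorem \ref{the1} gives convergence of $(X^n)$, hence of $(\tilde U^n)$, hence of $(U^n)$ to some $U^\infty$ with $E_1^T U^\infty = E_1^T U^0$; passing to the limit in \eqref{equ1} (the limit of $\delta u_h^{n+1}$ is $0$, so the first equation forces $\Delta_h\omega_h^\infty = 0$, i.e. $\omega_h^\infty$ constant) yields \eqref{eq2222}.

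The main obstacle I expect is the bookkeeping in \textbf{H2}: one must be careful that the gradient of the \emph{two-variable} energy $G$ on $\mathbb{R}^{2(M-1)}$ — not just $\tilde\nabla\tilde\Phi$ — is controlled by $\|X^{n+1}-X^n\|$, which requires correctly identifying which slot of $X^{n+1}$ holds $\tilde U^n$ versus $\tilde U^{n+1}$ and tracking the cross terms from the $\tfrac{1}{4\tau}\|\cdot\|_{-1}^2$ part; a naive choice of state vector or energy can make \textbf{H2} fail. The only other subtlety is ensuring the constants in the norm equivalences and in \eqref{equ10} are genuinely $n$-independent, which is automatic here since everything lives in a fixed finite-dimensional space once $h$ is fixed.
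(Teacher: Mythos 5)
Your proposal is correct and follows essentially the same route as the paper's proof: augment the state to a doubled space, use the modified energy $\widetilde{\Phi}(U,V)=\Phi(U)+\frac{1}{4\tau}V^TA_h^{-1}V$ together with the decay estimate \eqref{equ10} to verify \textbf{H1} and \textbf{H2}, establish the {\L}ojasiewicz property for the augmented energy, and conclude via Theorem \ref{the1}. The differences are only cosmetic: you parameterize the augmented state as $(\tilde{U}^n,\tilde{U}^{n-1})$ instead of the paper's $(U^n,\delta U^n)$ (equivalent up to an invertible linear change of variables, which also absorbs your minor slip in identifying $\|X^{n+1}-X^n\|^2$ and the exact form of $\nabla_{\tilde U}G$, since $\|\delta\tilde U^{n+1}\|^2+\|\delta\tilde U^{n}\|^2$ and $\|\delta\tilde U^{n+1}\|^2+\|\delta\tilde U^{n+1}-\delta\tilde U^{n}\|^2$ are comparable), and you deduce the {\L}ojasiewicz inequality from analyticity of the polynomial augmented energy, whereas the paper checks it directly at $(U^\infty,\mathbf{0})$ via $(a+b)^{1-\nu}\le a^{1-\nu}+b^{1-\nu}$.
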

\begin{proof}
\textbf{Step 1. $(u_h^n)_{n\geq 0}$ is bounded in $\mathcal{C}_{N\times N}^{per}$.}
The energy inequality \eqref{equ10} implies that the sequence $(E_h(u_h^n))_{n\geq 0}$ is nonincreasing. Moreover, it satisfies
\[
\widetilde{E}_h(u_h^n,\delta u_h^n)\geq E_h(u_h^n)\geq \frac{1}{2}\|u_h^n\|_2^2-\frac{3}{4}|\Omega|,
\]
which means $(E_h(u_h^n))_{n\geq 0}$ is bounded from below. Therefore, the sequence converges. Consequently, the sequence $(u_h^n)_{n\geq 0}$ is bounded in $\mathcal{C}_{N\times N}^{per}$. Therefore, there exists a convergent subsequence $(u_h^{n_k})_{n_k\geq 0}$ such that $\lim_{n_k\to \infty}u_h^{n_k} =u_h^\infty$.

Analogous to the proof of the \textbf{Step 2} in Theorem \ref{the2}, we can write the BDF2 scheme \eqref{equ1} in matrix form: let $U^0,U^1\in \mathbb{R}^M$, and for $n=1,2,\cdots$, let $(U^{n+1},W^{n+1})\in \mathbb{R}^M\times \mathbb{R}^M$ solve
\begin{equation}\label{equ11}
\begin{cases}
\frac{3U^{n+1}-4U^n+U^{n-1}}{2\tau}= -A_hW^{n+1},\\
W^{n+1}=\nabla F_M(U^{n+1})+\varepsilon^2 A_J U^{n+1},
\end{cases}
\end{equation}
where $U^n=(u_i^n)_{1\leq i\leq M}$ and $W^n=(\omega_i^n)_{1\leq i\leq M}$. We may assume that $A_h$ is an $M\times M$ symmetric positive definite matrix with inverse $A_h^{-1}$. The discrete modified energy is defined by
\begin{equation}\label{equ12}
\widetilde{\Phi}(U,V)=\Phi(U)+\frac{1}{4\tau}V^TA_h^{-1}V,
\end{equation}
where $\Phi(U)$ is defined in \eqref{eq20}. We will later show that the modified energy $\widetilde{\Phi}$ can be recovered to $\Phi$ as $n\to \infty$.

\textbf{Step 2. The sequence $(U^n,\delta U^n)_{n\geq 0}$ satisfies the assumptions of Theorem \ref{the1} in $\mathbb{R}^{2M}$.}
We denote $\delta U^{n+1}=U^{n+1}-U^n$, and write
\begin{equation}\label{equ13}
    3U^{n+1}-4U^n+U^{n-1}=2\delta U^{n+1}+(\delta U^{n+1}-\delta U^n).
\end{equation}
By Theorem \ref{Th2}, when $\tau< \frac{2\gamma_0}{C_\textrm{J}\varepsilon^4}$, we have
\begin{equation}\label{equ14}
        \frac{c_{\tau}}{\tau}\left(\|\delta U^{n+1}\|^2+(\delta U^{n+1}-\delta U^n)^TA_h^{-1}(\delta U^{n+1}-\delta U^n)\right)+\widetilde{\Phi}(U^{n+1},\delta U^{n+1})\leq \widetilde{\Phi}(U^n,\delta U^n), \quad \forall n\geq 0,
\end{equation}
where $c_{\tau}=\min\left(\frac{\gamma_0\tau}{2}-\frac{\varepsilon^4C_J\tau^2}{4},\frac{1}{4}\right)>0$. Let $G=\widetilde{\Phi}$. Then the assumption $\textbf{H1}$ holds. From inequality \eqref{equ14}, it follows that the sequence $(\widetilde{\Phi}(U^n,\delta U^n))_{n\geq 0}$ converges to some $\Phi^{\ast}$ and $\delta U^n\to 0$ as $n\to \infty$. This implies that the original energy sequence $(\Phi(U^n))_{n\geq 0}$ converges to $\Phi^{\ast}$. Therefore, the modified energy $\widetilde{\Phi}$ and the original energy $\Phi$ share the same limit $\Phi^{\ast}$, meaning the the modified energy is recovered at infinity.
Next, we verity that $\widetilde{\Phi}$ satisfies condition $\textbf{H2}$.

By eliminating the intermediate variable $W^{n+1}$ from the scheme \eqref{equ11} and utilizing the identity \eqref{equ13}, we obtain
\begin{equation}\label{equ15}
        \|\nabla \Phi(U^{n+1})\|\leq \frac{1}{\tau}\|A_h^{-1}\|(\|U^{n+1}-U^n\|+\|\delta U^{n+1}-\delta U^n\|).
\end{equation}
Since $\nabla \widetilde{\Phi}(U,V)=\left(\nabla\Phi(U),\frac{A_h^{-1}V}{2\tau}\right)\in \mathbb{R}^{2M}$, we have
\begin{align}\label{equ16}
    \|\nabla \widetilde{\Phi}(U,V)\|^2&=\|\nabla\Phi(U)\|^2+\frac{1}{4\tau^2}\|A_h^{-1}V\|^2\nonumber\\
    &\leq \|\nabla\Phi(U)\|^2+ \frac{\|A_h^{-1}\|^2}{4\tau^2}\|V\|^2.
\end{align}
Together with \eqref{equ14}-\eqref{equ16}, we deduce
\begin{align}\label{equ17}
    \widetilde{\Phi}(U^n,\delta U^n)-\widetilde{\Phi}(U^{n+1},\delta U^{n+1})&\geq \frac{c_{\tau}\min\left(1,\frac{1}{\lambda_M}\right)}{\tau}(\|U^{n+1}-U^n\|^2+\|\delta U^{n+1}-\delta U^n\|^2)\nonumber\\
    &\geq \frac{\tau c_{\tau}\min\left(1,\frac{1}{\lambda_M}\right)}{4\|A_h^{-1}\|^2}\|\nabla \Phi(U^{n+1})\|^2+\frac{c_{\tau}\min\left(1,\frac{1}{\lambda_M}\right)}{2\tau}\|U^{n+1}-U^n\|^2\nonumber\\
    &\geq \frac{\tau c_{\tau}\min\left(1,\frac{1}{\lambda_M}\right)}{4\|A_h^{-1}\|^2}\|\nabla \widetilde{\Phi}(U^{n+1},\delta U^{n+1})\|^2,\quad \forall n\geq 0,
\end{align}
where $\lambda_{M}>0$ is the largest eigenvalue of $A_h$. This implies that $\widetilde{\Phi}$ satisfies assumption $\textbf{H2}$.

\textbf{Step 3. The modified energy $\widetilde{\Phi}$ satisfies the {\L}ojasiewicz inequality in $\mathbb{R}^{2M}$.}
From \textbf{Step 1}, we have a convergent subsequence $(U^{n_k})_{n_k\geq 0}$ of $(U^n)_{n\geq 0}$ converges to some $U^{\infty}$. Since $\Phi$ is real analytic, it satisfies the {\L}ojasiewicz inequality \eqref{eq8} at $U^{\infty}$. In the following proof, suppose $\|U-U^{\infty}\|<\sigma$ and $\|V^T A_h^{-1}V\|< 1$. Using the inequality $(a+b)^{1-\nu}\leq a^{1-\nu}+b^{1-\nu}$ for $a,b\geq 0$, we obtain
\begin{align}\label{equ18}
        |\widetilde{\Phi}(U,V)-\widetilde{\Phi}(U^{\infty},\textbf{0})|^{1-\nu} &\leq |\Phi(U)-\Phi^{\ast}|^{1-\nu}+\frac{1}{(4\tau)^{1-\nu}}(V^T A_h^{-1}V)^{\frac{2(1-\nu)}{2}}\nonumber\\
        &\leq \gamma_1\|\nabla \Phi(U)\|+\frac{1}{(4\tau)^{1-\nu}}(V^T A_h^{-1}V)^{\frac{1}{2}}\nonumber\\
        &\leq \sqrt{2}\max\left\{\gamma_1,\frac{(4\tau)^\theta \lambda_M}{2\sqrt{\lambda_1}}\right\}\left(\|\nabla \Phi(U)\|^2+\frac{1}{4\tau^2}\|A_h^{-1}V\|^2\right)^{\frac{1}{2}}\nonumber\\
        &\leq \sqrt{2}\max\left\{\gamma_1,\frac{(4\tau)^\theta \lambda_M}{2\sqrt{\lambda_1}}\right\}\|\nabla\widetilde{\Phi}(U,V)\|,
\end{align}
where $\lambda_1>0$ and $\lambda_M>0$ are the smallest and largest eigenvalues of the positive definite matrix $A_h$. Therefore, $\widetilde{\Phi}$ satisfies the {\L}ojasiewicz inequality at $(U^{\infty},\textbf{0})$. Finally, taking the limit in \eqref{equ1} and invoking Theorem \ref{the1} completes the proof.
\end{proof}

\subsection{The second-order linearly implicit (2LI) scheme}
We replace the potential function $F$ with the modified potential function $F_K$ defined by \eqref{eq411}, thereby ensuring the energy stability of the linear scheme. The 2LI scheme for the N-CH equation reads: given $u_h^0,u_h^1\in \mathcal{C}_{N\times N}^{per}$, for all $n\geq 1$, let $u_h^{n+1}, \omega_h^{n+1} \in \mathcal{C}_{N\times N}^{per}$ solve
\begin{eqnarray}\label{e1}
\left\{
\begin{array}{lllll}
\frac{3u_h^{n+1}-4u_h^n+u_h^{n-1}}{2\tau}= \Delta_h \omega_h^{n+1}, \hspace{0.7cm}\ \\
\omega_h^{n+1} =2F_K'(u_h^n)-F_K'(u_h^{n-1})+\varepsilon^2 [J\circledast \mathbf{1}]u_h^{n+1}-\varepsilon^2[J\circledast u_h^{n+1}],  \\
\end{array}
\right.
\end{eqnarray}
where the nonlinearity $F_K$ satisfies the assumption \eqref{assum1}. The 2LI scheme \eqref{e1} is a linear second-order scheme and its solution is unique. Under the condition that $(u_h^1\|\mathbf{1})=(u_h^0\|\mathbf{1})$, the 2LI scheme \eqref{e1} is mass conservative. Next, we establish its modified energy stability.
\begin{thm}\label{Th4}
If $\beta$ and $\tau$ satisfy:
\begin{equation}\label{tau2}
    \beta\leq \frac{\gamma_0+1}{3},\;\tau\leq  \frac{2(\gamma_0+1-3\beta)}{C_\textrm{J}\varepsilon^4},
\end{equation}
where $C_\textrm{J}>0$ depends on kernel function $\textrm{J}$ but is independent of $h$. Then, the 2LI scheme \eqref{e1} satisfies the modified energy stability, that is
\begin{equation}\label{equ200}
\widetilde{E}_{K,h}(u_h^{n+1},\delta u_h^{n+1})\leq \widetilde{E}_{K,h}(u_h^n,\delta u_h^n),\quad \forall n\geq 0,
\end{equation}
where
$
    \widetilde{E}_{K,h}(u_h,v_h)=E_{K,h}(u_h,v_h)+\frac{\beta}{2}\|v_h\|_2^2+\frac{1}{4\tau}\|v_h\|_{-1}^2.
$
\end{thm}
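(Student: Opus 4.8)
The plan is to follow the proof of Theorem~\ref{Th2} for the BDF2 scheme; the only genuinely new ingredient is the treatment of the explicit extrapolation $2F_K'(u_h^n)-F_K'(u_h^{n-1})$ of the nonlinearity. I would first take the discrete $\ell^2$ inner product of the first equation in \eqref{e1} with $(-\Delta_h)^{-1}\delta u_h^{n+1}$ and of the second with $\delta u_h^{n+1}$, then rewrite $3u_h^{n+1}-4u_h^n+u_h^{n-1}$ by the identity \eqref{equ2}. This produces an energy identity of exactly the same shape as \eqref{equ5}, in which the single term $h^2(F'(u_h^{n+1})\|\delta u_h^{n+1})$ is replaced by $h^2\bigl((2F_K'(u_h^n)-F_K'(u_h^{n-1}))\|\delta u_h^{n+1}\bigr)$ and $F$ by $F_K$.

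For the terms that already appear in the BDF2 case I would reuse the estimates essentially verbatim. The term $\tfrac{1}{2\tau}h^2\bigl((-\Delta_h)^{-1}(\delta u_h^{n+1}-\delta u_h^n)\|\delta u_h^{n+1}\bigr)$ is handled by the algebraic identity \eqref{equ6}, which extracts $\tfrac{1}{4\tau}\|\delta u_h^{n+1}\|_{-1}^2-\tfrac{1}{4\tau}\|\delta u_h^n\|_{-1}^2$ together with the nonnegative remainder $\tfrac{1}{4\tau}\|\delta u_h^{n+1}-\delta u_h^n\|_{-1}^2$. The nonlocal pair $\varepsilon^2[J\circledast\mathbf{1}]h^2(u_h^{n+1}\|\delta u_h^{n+1})-\varepsilon^2 h^2([J\circledast u_h^{n+1}]\|\delta u_h^{n+1})$ is treated exactly as in \eqref{equ8}: Lemma~\ref{lem1} provides the symmetric decomposition into the nonlocal part of $E_{K,h}(u_h^{n+1})-E_{K,h}(u_h^n)$, the positive term $\tfrac{\varepsilon^2[J\circledast\mathbf{1}]}{2}\|\delta u_h^{n+1}\|_2^2$, and a residual $-\tfrac{\varepsilon^2}{2}h^2([J\circledast\delta u_h^{n+1}]\|\delta u_h^{n+1})$; the residual is then bounded below via Lemma~\ref{lem2} applied with $\phi=(-\Delta_h)^{-1}\delta u_h^{n+1}$, as in \eqref{eq4}, giving $-\tfrac{\varepsilon^4 C_\textrm{J}\tau}{4}\|\delta u_h^{n+1}\|_2^2-\tfrac{1}{4\tau}\|\delta u_h^{n+1}\|_{-1}^2$. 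Here $C_\textrm{J}$ is the constant furnished by Lemma~\ref{lem2}.

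The heart of the argument is the extrapolated nonlinearity, which I would split as $2F_K'(u_h^n)-F_K'(u_h^{n-1})=F_K'(u_h^n)+\bigl(F_K'(u_h^n)-F_K'(u_h^{n-1})\bigr)$. A second-order Taylor expansion of $F_K$ at $u_h^n$ with Assumption~\ref{assum1} yields
\[
h^2(F_K'(u_h^n)\|\delta u_h^{n+1})\ge h^2(F_K(u_h^{n+1})\|\mathbf{1})-h^2(F_K(u_h^n)\|\mathbf{1})-\tfrac{\beta}{2}\|\delta u_h^{n+1}\|_2^2 ,
\]
while the mean value theorem gives $F_K'(u_h^n)-F_K'(u_h^{n-1})=F_K''(\eta^n)\,\delta u_h^n$ with $|F_K''(\eta^n)|\le\beta$, so that Cauchy--Schwarz and Young's inequality give
\[
h^2\bigl((F_K'(u_h^n)-F_K'(u_h^{n-1}))\|\delta u_h^{n+1}\bigr)\ge-\tfrac{\beta}{2}\|\delta u_h^{n+1}\|_2^2-\tfrac{\beta}{2}\|\delta u_h^n\|_2^2 .
\]
The appearance of the lagged term $-\tfrac{\beta}{2}\|\delta u_h^n\|_2^2$ is exactly what forces the correction $\tfrac{\beta}{2}\|v_h\|_2^2$ in the definition of $\widetilde{E}_{K,h}$, analogously to how the $\tfrac{1}{4\tau}\|v_h\|_{-1}^2$ term is needed for BDF2; recognizing this correction is, I expect, the main conceptual point.

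Collecting the estimates into the energy identity and recalling $\varepsilon^2[J\circledast\mathbf{1}]-1=\gamma_0$ from \eqref{eq-assum1}, I expect to reach
\[
\widetilde{E}_{K,h}(u_h^{n+1},\delta u_h^{n+1})+\frac{3}{4\tau}\|\delta u_h^{n+1}\|_{-1}^2+\frac{1}{4\tau}\|\delta u_h^{n+1}-\delta u_h^n\|_{-1}^2+\Bigl(\frac{\gamma_0+1}{2}-\frac{3\beta}{2}-\frac{\varepsilon^4 C_\textrm{J}\tau}{4}\Bigr)\|\delta u_h^{n+1}\|_2^2\le\widetilde{E}_{K,h}(u_h^n,\delta u_h^n).
\]
It then remains only to check the sign of the last coefficient: since $\tfrac{\gamma_0+1}{2}-\tfrac{3\beta}{2}-\tfrac{\varepsilon^4 C_\textrm{J}\tau}{4}=\tfrac14\bigl(2(\gamma_0+1-3\beta)-\varepsilon^4 C_\textrm{J}\tau\bigr)$, the two hypotheses $\beta\le\tfrac{\gamma_0+1}{3}$ and $\tau\le\tfrac{2(\gamma_0+1-3\beta)}{C_\textrm{J}\varepsilon^4}$ of \eqref{tau2} make it nonnegative, while the other extra terms are manifestly nonnegative; this gives \eqref{equ200}. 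The only real obstacle is thus the bookkeeping --- selecting the modified energy and tracking how the two constraints in \eqref{tau2} emerge from balancing the coefficient of $\|\delta u_h^{n+1}\|_2^2$.
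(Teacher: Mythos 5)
Your proposal is correct and follows essentially the same route as the paper: the same pairing with $(-\Delta_h)^{-1}\delta u_h^{n+1}$ and $\delta u_h^{n+1}$, the identity \eqref{equ2}, the reuse of \eqref{equ6} and \eqref{equ8}, the split $2F_K'(u_h^n)-F_K'(u_h^{n-1})=F_K'(u_h^n)+\bigl(F_K'(u_h^n)-F_K'(u_h^{n-1})\bigr)$ estimated via Taylor expansion and the mean value theorem under Assumption \ref{assum1}, leading to exactly the paper's inequality \eqref{energy1} and the sign condition enforced by \eqref{tau2}. The identification of the lagged term $\tfrac{\beta}{2}\|\delta u_h^n\|_2^2$ as the source of the extra correction in $\widetilde{E}_{K,h}$ matches the paper's construction.
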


\begin{proof}
 We Take the discrete $\ell^2$ inner product of the first equation in \eqref{e1} with $(-\Delta_h)^{-1}\delta u_h^{n+1}$ and of the second equation with $\delta u_h^{n+1}$. Applying the identity \eqref{equ2} then yields
 \begin{align}\label{equ21}
        &\frac{1}{\tau}\|\delta u_h^{n+1}\|_{-1}^2+\frac{1}{2\tau}h^2(\delta u_h^{n+1}-\delta u_h^n\|(-\Delta_h)^{-1}\delta u_h^{n+1})+\varepsilon^2 [J\circledast \mathbf{1}]h^2(u_h^{n+1}\|\delta u_h^{n+1})-\varepsilon^2h^2([J\circledast u_h^{n+1}]\|\delta u_h^{n+1})\nonumber\\
        &=h^2(2F_K'(u_h^n)-F_K'(u_h^{n-1})\|u_h^n-u_h^{n+1}).
 \end{align}
 Since $F_k$ satisfies assumption \eqref{assum1}, we can use Taylor's expansion and the mean value theorem to obtain
 \begin{equation}\label{equ22}
        h^2(F_K'(u_h^n)\|u_h^n-u_h^{n+1})\leq h^2(F_K(u_h^n)\|\mathbf{1})-h^2(F_K(u_h^{n+1})\|\mathbf{1})+\frac{\beta}{2}\|u_h^n-u_h^{n+1}\|^2_2,
 \end{equation}
 and
 \begin{equation}\label{equ23}
        h^2(F_K'(u_h^n)-F_K'(u_h^{n-1})\|u_h^n-u_h^{n+1})\leq \frac{\beta}{2} \|u_h^n-u_h^{n-1}\|^2_2+\frac{\beta}{2} \|u_h^n-u_h^{n+1}\|^2_2.
 \end{equation}
 Starting from the identity \eqref{equ21}, we combine \eqref{equ22} and \eqref{equ23}, using \eqref{equ6} and \eqref{equ8} to deduce
 \begin{equation}\label{energy1}
       \frac{3}{4\tau}\|\delta u_h^{n+1}\|_{-1}^2+\left(\frac{\gamma_0+1-3\beta}{2}-\frac{\varepsilon^4 C_J \tau}{4}\right)\|\delta u_h^{n+1}\|_2^2+\frac{1}{4\tau}\|\delta u_h^{n+1}-\delta u_h^n\|_{-1}^2+\widetilde{E}_{K,h}(u_h^{n+1},\delta u_h^{n+1})\leq \widetilde{E}_{K,h}(u_h^n,\delta u_h^n).
 \end{equation}
 If the condition \eqref{tau2} holds, we conclude that the 2LI scheme satisfies the energy estimate \eqref{equ200}.
\end{proof}

We now prove the convergence to equilibrium for the 2LI scheme \eqref{e1}.
\begin{thm}
Suppose $\beta<\frac{\gamma_0+1}{3},\;\tau<\frac{2(\gamma_0+1-3\beta)}{C_\textrm{J}\varepsilon^4}$ and let $(u_h^n,\omega_h^n)_{n\geq 0}$ be a sequence which complies with \eqref{e1}. Then the whole sequence converges to a steady state $(u_h^\infty,\omega_h^\infty)$ that is a solution of \eqref{eq2222}.
\end{thm}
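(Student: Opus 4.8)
The plan is to transcribe, almost verbatim, the three-step argument of Theorem~\ref{Th3}, the only essential change being that $F_{K,M}$ is no longer analytic, so the {\L}ojasiewicz input must come from Kurdyka's theorem for semi-algebraic functions (Theorem~\ref{the8}) together with Lemma~\ref{lem6}, rather than from Theorem~\ref{lem3}.

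\textbf{Step 1 (boundedness).} The modified energy stability \eqref{equ200} makes $(\widetilde{E}_{K,h}(u_h^n,\delta u_h^n))_{n\ge0}$ nonincreasing, and since $F_K\ge 0$ the same coercivity estimate used for $E_h$ applies, giving $\widetilde{E}_{K,h}(u_h^n,\delta u_h^n)\ge E_{K,h}(u_h^n)\ge\frac12\|u_h^n\|_2^2-\frac34|\Omega|$. Hence this sequence converges and $(u_h^n)_{n\ge0}$ is bounded in $\mathcal{C}_{N\times N}^{per}$; extract a subsequence $u_h^{n_k}\to u_h^\infty$.

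\textbf{Step 2 (matrix/gradient-flow form).} Ordering the cell-centered nodes lexicographically, \eqref{e1} becomes, with $A_h,A_J$ from \eqref{eq29}--\eqref{eq28} and $F_{K,M}(v)=\sum_iF_K(v_i)$, the system $\frac{3U^{n+1}-4U^n+U^{n-1}}{2\tau}=-A_hW^{n+1}$, $W^{n+1}=2\nabla F_{K,M}(U^n)-\nabla F_{K,M}(U^{n-1})+\varepsilon^2A_JU^{n+1}$. As in Step~2 of Theorem~\ref{the2}, multiplying by $E_1^T$ gives $E_1^TU^n\equiv E_1^TU^0$, so one restricts to the $(M-1)$-dimensional orthogonal complement of $E_1$, on which $A_h$ is symmetric positive definite; keep the symbols $U$, $M$, $A_h$ for the reduced quantities. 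Work in $\mathbb{R}^{2M}$ with the state $Z^n=(U^n,\delta U^n)$, $\delta U^{n+1}=U^{n+1}-U^n$, the energy $\Phi_K$ of \eqref{eq50}, and the modified energy $\widetilde{\Phi}_K(U,V)=\Phi_K(U)+\frac{\beta}{2}\|V\|^2+\frac{1}{4\tau}V^TA_h^{-1}V$, which is the matrix counterpart of $\widetilde{E}_{K,h}$.

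\textbf{Step 3 (hypotheses of Theorem~\ref{the1}).} Rewriting \eqref{energy1} in matrix form and using $V^TA_h^{-1}V\ge\lambda_M^{-1}\|V\|^2$ ($\lambda_M$ the largest eigenvalue of $A_h$) yields, under \eqref{tau2} with strict inequality, $\widetilde{\Phi}_K(Z^n)-\widetilde{\Phi}_K(Z^{n+1})\ge c_0\|Z^{n+1}-Z^n\|^2$, i.e.\ \textbf{H1}; it also forces $\delta U^n\to0$, so $\widetilde{\Phi}_K(Z^n)$ and $\Phi_K(U^n)$ share a common limit and the modified energy is recovered at infinity. For \textbf{H2}, eliminate $W^{n+1}$ to get $\nabla\Phi_K(U^{n+1})=W^{n+1}+(\nabla F_{K,M}(U^{n+1})-\nabla F_{K,M}(U^n))-(\nabla F_{K,M}(U^n)-\nabla F_{K,M}(U^{n-1}))$, with $\|W^{n+1}\|\le\frac{1}{2\tau}\|A_h^{-1}\|(2\|\delta U^{n+1}\|+\|\delta U^{n+1}-\delta U^n\|)$; Assumption~\ref{assum1} bounds the two brackets by $\beta\|\delta U^{n+1}\|$ and $\beta\|\delta U^n\|$, and the identity $\delta U^n=\delta U^{n+1}-(\delta U^{n+1}-\delta U^n)$ absorbs the $\|\delta U^n\|$ term into $\|Z^{n+1}-Z^n\|$; combined with $\nabla_V\widetilde{\Phi}_K=(\beta I+\frac{1}{2\tau}A_h^{-1})V$ this gives $\|\nabla\widetilde{\Phi}_K(Z^{n+1})\|\le c_3\|Z^{n+1}-Z^n\|$. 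Since $\Phi_K$ is semi-algebraic by Lemma~\ref{lem6}, so is $\widetilde{\Phi}_K$, and Theorem~\ref{the8} provides the {\L}ojasiewicz inequality for $\widetilde{\Phi}_K$; the passage from its validity at $U^\infty$ for $\Phi_K$ to validity at $(U^\infty,\mathbf0)$ for $\widetilde{\Phi}_K$ is verbatim \eqref{equ18}. Theorem~\ref{the1} then gives convergence of $(Z^n)$, hence of $(U^n)$ to some $U^\infty$ with $E_1^TU^\infty=E_1^TU^0$; passing to the limit in \eqref{e1} shows $(u_h^\infty,\omega_h^\infty)$ solves \eqref{eq2222}.

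The step I expect to be the main obstacle is \textbf{H2}: the two-step extrapolation $2\nabla F_{K,M}(U^n)-\nabla F_{K,M}(U^{n-1})$ is the only genuinely new feature relative to the BDF2 analysis, and one must control the resulting discrete second difference of $\nabla F_{K,M}$ purely in terms of the increments appearing in $\|Z^{n+1}-Z^n\|$ — which is exactly where the uniform bound $\beta$ on $F_K''$ and the rewriting of $\delta U^n$ are indispensable. The coercivity, the reduction modulo $E_1$, the semi-algebraic {\L}ojasiewicz inequality and the recovery of the original energy at infinity are direct transcriptions of the BDF2 proof, so I would merely outline them and refer to Theorem~\ref{Th3}.
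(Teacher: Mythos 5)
Your proposal is correct and follows essentially the same route as the paper: matrix reduction modulo $E_1$, the modified energy $\widetilde{\Phi}_K$ on $\mathbb{R}^{2M}$, \textbf{H1} from \eqref{energy1}, \textbf{H2} by eliminating $W^{n+1}$ and using the bound $\beta$ on $F_K''$ (your regrouping of the second difference of $\nabla F_{K,M}$ is algebraically equivalent to the paper's \eqref{2LI3}--\eqref{2LI4}), and the semi-algebraic {\L}ojasiewicz inequality via Lemma \ref{lem6}/Theorem \ref{the8} lifted to $(U^\infty,\mathbf{0})$ exactly as in \eqref{equ18}. The only nitpick is that the precise constants in your coercivity bound $E_{K,h}(u_h)\ge\frac12\|u_h\|_2^2-\frac34|\Omega|$ need slight adjustment for $F_K$ with $K$ near $1$, but coercivity (hence boundedness) holds regardless, so nothing essential changes.
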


\begin{proof}
Analogously to \textbf{Step 2} of Theorem \ref{the2}, the 2LI scheme \eqref{e1} can be rewritten in matrix form: let $U^0,U^1\in \mathbb{R}^M$, and for $n=1,2,\cdots$, let $(U^{n+1},W^{n+1})\in \mathbb{R}^M\times \mathbb{R}^M$ solve
\begin{equation}\label{2LI1}
\begin{cases}
\frac{3U^{n+1}-4U^n+U^{n-1}}{2\tau}= -A_hW^{n+1},\\
W^{n+1}=\nabla \left(2F_{K,M}(U^n)-F_{K,M}(U^{n-1})\right)+\varepsilon^2 A_J U^{n+1},
\end{cases}
\end{equation}
where $F_{K,M}:\mathbb{R}^M\to \mathbb{R}$ is defined by $F_{K,M}(v_1,\cdots,v_M)=\sum_{i=1}^{M}F_K(v_i)$.
 We can assume that $A_h$ is an $M\times M$ symmetric positive definite matrix with inverse $A_h^{-1}$. The corresponding discrete energy is defined as
\[
\widetilde{\Phi}_K(U,V)={\Phi}_K(U)+\frac{\beta}{2}\|V\|^2+\frac{1}{4\tau}V^TA_h^{-1}V,
\]
where ${\Phi}_K(U)$ is defined by \eqref{eq50} and is semi-algebraic.

By \eqref{energy1}, when $\beta<\frac{\gamma_0+1}{3},\;\tau<\frac{2(\gamma_0+1-3\beta)}{C_\textrm{J}\varepsilon^4}$, we have
\begin{equation}\label{2LI2}
        \frac{c_{\tau}}{\tau}\left(\|U^{n+1}-U^n\|^2+(\delta U^{n+1}-\delta U^n)^TA_h^{-1}(\delta U^{n+1}-\delta U^n)\right)+\widetilde{\Phi}(U^{n+1},\delta U^{n+1})\leq \widetilde{\Phi}(U^n,\delta U^n), \quad \forall n\geq 0,
\end{equation}
where $c_{\tau}=\min\left(\frac{(\gamma_0+1-3\beta)\tau}{2}-\frac{\varepsilon^4C_J\tau^2}{4},\frac{1}{4}\right)>0$. Let $G=\widetilde{\Phi}$. Thus, assumption $\textbf{H1}$ holds. Furthermore, from inequality \eqref{2LI2}, the sequence $(U^n)_{n\geq 0}$ is bounded in $\mathbb{R}^M$. Hence, a subsequence $(U^{n_k})_{n_k\geq 0}$ converges to some $U^{\infty}$ in $\mathbb{R}^M$.

We use the scheme \eqref{2LI1}, identity \eqref{equ13} and assumption \eqref{assum1} and obtain
\begin{equation}\label{2LI3}
\frac{\|A_h^{-1}\|}{2\tau}(2\|\delta U^{n+1}\|+\|\delta U^{n+1}-\delta U^n\|)\geq \|\nabla \Phi_K(U^{n+1})\|-2\beta\|\delta U^{n+1} \|-\beta \|U^{n+1}-U^{n-1}\|.
\end{equation}
By the triangle inequality
\[
\|U^{n+1}-U^{n-1}\|=\|2\delta U^{n+1}+(\delta U^n-\delta U^{n+1})\|\leq 2\|\delta U^{n+1}\|+\|\delta U^{n+1}-\delta U^n\|,
\]
we can obtain
\begin{equation}\label{2LI4}
(\|A_h^{-1}\|+4\tau\beta)\left(\|\delta U^{n+1}\|+\|\delta U^{n+1}-\delta U^n\|\right)\geq \tau \|\nabla \Phi_K(U^{n+1})\|.
\end{equation}
We note that $\nabla \widetilde{\Phi}_K(U,V)=\left(\nabla\Phi_K(U),\left(\frac{A_h^{-1}}{2\tau}+\beta I\right)V\right)\in \mathbb{R}^{2M}$, and
\begin{align}\label{2LI5}
    \|\nabla \widetilde{\Phi}_K(U,V)\|^2&=\|\nabla\Phi_K(U)\|^2+\left\|\left(\frac{A_h^{-1}}{2\tau}+\beta I\right)V\right\|^2\nonumber\\
    &\leq \|\nabla\Phi_K(U)\|^2+ \left(\frac{\|A_h^{-1}\|}{2\tau}+\beta \right)^2\|V\|^2.
\end{align}
Combining this inequality with \eqref{2LI2} and \eqref{2LI4}, we derive that the sequence $(U^n,\delta U^n)_{n\geq 0}$ also satisfies condition \textbf{H2}. As demonstrated in the \textbf{Step 3} of Theorem \ref{Th3}, we obtain the energy $\widetilde{\Phi}_K$ satisfies the {\L}ojasiewicz inequality near $(U^{\infty},\textbf{0})$. Applying Theorem \ref{the1}, we conclude the proof.
\end{proof}
\begin{rmk}
For the two energy-stable second-order fully discrete schemes considered above, because the modified energy functions depend on $\tau$, the stability results for first-order schemes in \cite{Pierre2,Pierre3} cannot be directly applied to analyze the stability of a local minimizer as $\tau\to 0$. A proof of this stability will be provided in our future works.
\end{rmk}

\section{Concluding remarks}\label{sec5}
In this paper, we investigate three first-order and two second-order fully discrete numerical schemes for the N-CH equation with analytic nonlinearity. The first-order schemes comprise the backward Euler scheme, the convex splitting scheme and the SSI1 scheme, while the second-order schemes consist of the BDF2 scheme and 2LI scheme. A second-order finite difference method is used for spatial discretization. The unique solvability, discrete mass conservation and (modified) energy stability are proved for our proposed numerical schemes. Furthermore, by means of energy stability and the {\L}ojasiewicz inequality, we provide rigorous proofs of converge to equilibrium as time goes to infinity for the sequences generate by our proposed numerical schemes.

\bibliographystyle{plainnat}
\bibliography{NCH1}
\end{document}